\numberwithin{equation}{section}
\newtheorem{theorem}{Theorem}[section]
\newtheorem{proposition}[theorem]{Proposition}
\newtheorem{lemma}[theorem]{Lemma}
\DeclareMathOperator{\dist}{dist}
\DeclareMathOperator{\supp}{supp}
\DeclareMathOperator{\curl}{curl}
\newcommand{\calc}{{\mathcal C}}
\title[Increasing stability for conductivity coefficient]{Increasing stability for the conductivity and attenuation coefficients}
\author[Isakov]{Victor Isakov}
\address{Department of Mathematics and Statistics, Wichita State University, KS 67260-0033, USA.}
\curraddr{}
\email{victor.isakov@wichita.edu}
\thanks{The first and the second author were supported in part by the National Science Foundation.}
\author[Lai]{Ru-Yu Lai}
\address{Department of Mathematics, University of Washington, Seattle, WA 98195-4350,
USA.}
\curraddr{}
\email{rylai@math.washington.edu }
\author[Wang]{Jenn-Nan Wang}
\address{Institute of Applied Mathematical Sciences, NCTS, National Taiwan University, Taipei 106, Taiwan.}
\curraddr{}
\email{jnwang@math.ntu.edu.tw}
\thanks{The third author was supported in part by the MOST102-2115-M-002-009-MY3. }
\date{}
\begin{document}

\begin{abstract}
In this work we consider  stability of recovery of the conductivity and attenuation coefficients of the stationary Maxwell and Schr\"odinger equations from a complete set of
(Cauchy) boundary data. By using complex geometrical optics solutions we derive some bounds which can be viewed as an evidence of increasing stability in these inverse problems when frequency is growing.
\end{abstract}
\maketitle


\section{Introduction}

 The main goal of this paper is to demonstrate increasing stability of recovery of the conductivity coefficient from results of all possible electromagnetic boundary measurements when frequency of the stationary waves is growing. When the frequency is large the Maxwell system can not be reduced to a scalar conductivity equation, so we have to handle the full Maxwell system. We do it by reducing the first order system to a vectorial Schr\"odinger type equation  containing conductivity coefficient in matrix potential coefficient as in \cite{Ca}, \cite{OS}  and use ideas in \cite{I}, \cite{iw14} to derive increasing stability for potential coefficient involving variable attenuation. Use of the full system seems to be crucial, since increasing stability for the conductivity in the scalar equation does not seems to be true. We first consider a simpler case of scalar equation and extend the first and third author's result \cite{iw14} on the increasing stability estimate in the case of a constant attenuation to a variable one. Observe, that the method used in \cite{iw14} works only for  the constant attenuation. Extending this method to the case with variable attenuation is not an obvious routine, so the result for the scalar equation is of independent interest.

The problem of recovering the conductivity from boundary measurements has been well studied since the 1980s. The uniqueness issue was first settled by Sylvester and Uhlmann in \cite{SU1} where they constructed complex geometrical optics (CGO) solutions and proved global uniqueness of potential in the Schr\"odinger equation. A logarithmic stability estimate was obtained by Alessandrini \cite{A1}. Later, the log-type stability was shown to be optimal by Mandache \cite{M}. The logarithmic stability makes it impossible to design reconstruction algorithms with high resolution in practice since small errors in measurements will result in exponentially large errors in the reconstruction of the target material parameters. Nonetheless, in some cases, it has been observed numerically that the stability increases if one
increases the frequency in the equation. The study of the increasing stability phenomenon has attracted a lot of attention recently. There are several results \cite{I}, \cite{INUW}, \cite{iw14} and \cite{NUW} which
rigorously demonstrated the increasing stability behaviours in different settings.

For the Schr\"odinger equation without attenuation, the first author in \cite{I} derived some bounds in different ranges of frequency which can be viewed as an evidence of increasing stability behaviour. The idea in \cite{I} is to use complex- and real-valued geometrical optics solutions to control low and high frequency ranges, respectively. The proof was simplified in \cite{INUW} by using only complex-valued geometrical optics solutions. Recently, the first and third author showed in \cite{iw14} that the increasing stability also holds for the the same problem with constant attenuation. The proofs in \cite{iw14} use complex and bounded geometrical optics solutions.

Continuing the research in \cite{iw14} we show in this work even in the case of variable attenuation, the stability is improving when we increase the frequency. More precisely, the stability estimate will consist of two parts, one is the logarithmic part and the other one is the Lipschitz part (see Theorem~\ref{mainatt}). In the high frequency regime, the logarithmic part becomes weaker and the stability behaves like a Lipschitz continuity. Moreover, the constant of the Lipschitz part grows only polynomially in terms of the frequency.  We would like to point out that in \cite{NUW} an increasing stability phenomenon was proved for the acoustic equation in which the constant associated with the Lipschitz part grows exponentially in the frequency. In order to obtain polynomially growing constants as in \cite{I}, \cite{INUW} and \cite{iw14}, it seems like the smallness assumption on the attenuation is needed.

The main result of the paper is a proof of the increasing stability for the Maxwell equations in a conductive medium.
The first global uniqueness result for all three electromagnetic parameters of an isotropic medium was proved in \cite{OPS}. We refer the reader to the latest results in \cite{Ca0} where the uniqueness and stability (log-type) were established using local data.  Since our aim here is to demonstrate the increasing stability for the conductivity, we will assume that both the electric permittivity and the magnetic permeability are known constants. As in the previous results on the increasing stability, our proof here relies on CGO solutions. Here we will use CGO solutions constructed in \cite{Ca}.  Theorem~\ref{main}, shows that the logarithmic part becomes weaker and the Lipschitz part becomes dominant when the frequency is increasing. In order to derive a polynomially frequency-dependent constant in the Lipschitz part, we impose the smallness assumption on the conductivity. Of course, it is an interesting question whether one can remove the smallness assumption and still obtain a polynomially frequency-dependent constant in the Lipschitz part.

The  paper is structured as follows. Section 2 introduces the mathematical settings for the Schr\"odinger and the Maxwell equations and presents our main results. The proof of stability estimates for the Schr\"odinger equation is carried out in section 3. In section 4 we derive the stability estimate for the Maxwell equations.

\section{Preliminaries and main results}

Let $\Omega$ be a sub domain of the unit ball $B$ in $\mathbb{R}^3$, with Lipschitz boundary $\partial\Omega$. By $H^s(\Omega)$ we denote Sobolev spaces of functions on $\Omega$ with the standard norm $\|\cdot\|_{(s)}(\Omega)$.
These notations also hold for surfaces instead of $\Omega$
and for vector and matrix valued functions.

\subsection{Schr\"odinger equation with attenuation}

 We consider the Schr\"odinger equation with attenuation
\begin{align}\label{intro:sch}
     -\Delta u-\omega^2 u+ q u=0 \ \ \ \mbox{in $\Omega$}, \ \ q = i\omega\sigma +c,
\end{align}
 where $\sigma, c$ are real-valued bounded measurable functions in $\Omega$. Since the boundary value problem for \eqref{intro:sch} does not necessarily has a unique solution, for the study of the inverse problem we consider
 the Cauchy data set defined by
$$
    \calc(q)=\left\{(u,\partial_\nu u)|_{\partial\Omega} \in H^{1/2}(\partial\Omega)\times H^{-1/2}(\partial\Omega): \mbox{$u$ is a $H^1(\Omega)$-solution to (\ref{intro:sch})}    \right\}.
$$
Hereafter, $\nu$ is the unit outer normal vector to $\partial\Omega$. Let $q_j=i\omega \sigma_j+c_j$. To measure the distance between two Cauchy data sets, we define
$$
    \dist(\calc(q_1),\calc(q_2))=
    \sup_{j\neq k}\sup_{(f_j,g_j)\in \calc_{j}}  \inf_{(f_k,g_k)\in \calc_{k}}\frac{\|(f_j,g_j)-(f_k,g_k )\|_{(1/2, -1/2)}(\partial\Omega) }{\| (f_j,g_j) \|_{(1/2, -1/2)}(\partial\Omega)} ,
$$
where $(f_j,g_j)=(u_j,\partial_{\nu}u_j), j=1,2,$ are the Cauchy data for solutions $u_j$ to the equation
(\ref{intro:sch}) with $q=q_j$ and
$$
    \| (f,g) \|_{(1/2,-1/2)}(\partial\Omega)=
    \left( \|f\|^2_{(1/2)}(\partial\Omega)+ \|g\|^2_{(-1/2)}(\partial\Omega)  \right)^{1/2}.
$$
Denote
$$
  \epsilon= \dist(\calc_{1}, \calc_{2}),\ \  \mathcal{E}=-\log\epsilon.
$$

With these notations, the bound which indicates the increasing stability phenomenon for the Schr\"odinger equation with attenuation can now be stated as follows.
\begin{theorem}\label{mainatt}
Let $s>3/2$ and $q_j=i\omega\sigma_j+c_j$, $j=1,2$. Suppose that $\supp(q_1-q_2)\subset \Omega$ and $c_j, \sigma_j\in H^{2s}(\Omega)$ satisfy $\| c_j\|_{(2s)}(\Omega)
\leq M$ for some constant $M>0$. Let $ \omega>1,  \mathcal{E}>1$.

Then there exist  constants $m>0$ depending on $s, M$ and $C$ depending on $ s, \Omega, M$, such that if
$$
  \| \sigma_j\|_{(2s)}(\Omega)\leq m,
$$
then
\begin{align}\label{mainineq}
    \|q_1-q_2\|_{(-s)}(\Omega)\leq  C (\omega^2\epsilon+ \omega(\omega+ \mathcal{E}) ^{-(2s-3)/2}).
\end{align}
\end{theorem}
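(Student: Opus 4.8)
The plan is to follow the CGO-solutions strategy familiar from \cite{I}, \cite{INUW}, \cite{iw14}, splitting the estimate into a low-frequency piece controlled by boundary data and a high-frequency piece controlled by the a priori bounds on the coefficients, with the novelty being that the attenuation $\sigma$ is variable rather than constant. First I would set $q=q_1-q_2$, extend it by zero to all of $\R^3$ (using $\supp q\subset\Omega$), and derive the usual orthogonality/integral identity: for any $H^1$-solutions $u_1$ of the equation with $q_1$ and $u_2$ of the equation with $q_2$, one has $\int_\Omega q\, u_1 u_2 = \int_{\partial\Omega}(\text{Cauchy data difference})$, so that $|\int_\Omega q\,u_1u_2|\le C\,\epsilon\,\|u_1\|_{(1)}(\Omega)\|u_2\|_{(1)}(\Omega)$, with the $\|(f,g)\|_{(1/2,-1/2)}$ norms bounded by the $H^1(\Omega)$ norms of the solutions via trace theorems.

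Next I would insert the CGO solutions. Write $\zeta_1,\zeta_2\in\mathbb C^3$ with $\zeta_1+\zeta_2 = -\xi$ (the dual Fourier variable), $\zeta_j\cdot\zeta_j = \omega^2$, so that $|\zeta_j|^2 \sim \tfrac14|\xi|^2 + \omega^2 + (\text{imaginary part})^2$; the relation $\zeta\cdot\zeta=\omega^2$ rather than $=0$ is exactly the adaptation of CGO to the Helmholtz/Schr\"odinger-with-frequency setting used in \cite{iw14}. For each such $\zeta_j$ one builds $u_j = e^{i\zeta_j\cdot x}(1+\psi_j)$ with $\|\psi_j\|_{(1)}$ (or an $L^2$/$L^\infty$ estimate) bounded once the relevant spectral parameter $\tau := |\text{Im}\,\zeta_j|$ is large compared with $\|q_j\|$; since $q_j = i\omega\sigma_j + c_j$ and $\|c_j\|$ is only bounded by $M$ (not small), the remainder estimate will genuinely need $\tau \gtrsim M + \omega\|\sigma_j\|$, which is where the smallness hypothesis $\|\sigma_j\|_{(2s)}\le m$ enters to keep the threshold on $\tau$ polynomial in $\omega$. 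Plugging in gives, after the standard computation, a bound of the shape
\begin{align*}
  |\widehat{q}(\xi)| \le C\Big( e^{C\tau}\epsilon\, \tau^2 + \frac{1}{\tau}(\text{stuff involving }\|q_1\|,\|q_2\|) + \text{correction terms from variable }\sigma\Big),
\end{align*}
valid for $|\xi|\le 2\tau$ roughly. The extra care over \cite{iw14} is in handling the cross terms $e^{i\zeta_1\cdot x}e^{i\zeta_2\cdot x}\psi_1$, $e^{i\zeta_1\cdot x}e^{i\zeta_2\cdot x}\psi_2$, $e^{i\zeta_1\cdot x}e^{i\zeta_2\cdot x}\psi_1\psi_2$: when $\sigma$ is constant these contribute in a particularly clean way, whereas for variable $\sigma$ one must estimate them using the $H^{2s}$ regularity of the coefficients and interpolation, which forces working with the negative-order norm $\|q\|_{(-s)}$ on the left-hand side (hence the appearance of $s$ and the exponent $(2s-3)/2$).

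Then I would carry out the standard frequency-splitting: estimate $\|q\|_{(-s)}^2 = \int (1+|\xi|^2)^{-s}|\widehat q(\xi)|^2\,d\xi$ by breaking at $|\xi| = \rho$ for a parameter $\rho$ to be optimized. On $|\xi|\le\rho$ use the CGO bound with $\tau\sim\rho$, giving a contribution $\lesssim e^{C\rho}\epsilon\cdot(\text{poly in }\rho,\omega) + \rho^{-2}(\cdots)$; on $|\xi|>\rho$ use only the a priori bound $\|q\|_{(2s)}\lesssim \omega M$ together with $(1+|\xi|^2)^{-s}\le \rho^{-4s}(1+|\xi|^2)^{s}$-type trading (this is the source of the $2s-3$ power after accounting for the three-dimensional volume element and the shift by $s$), yielding a contribution $\lesssim \rho^{-2(2s)}\|q\|_{(2s)}^2 \sim \omega^2 M^2\rho^{-4s}$. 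Balancing, the natural choice is $\rho \sim \min\{\mathcal E, \text{something}\} + \omega$ — precisely $\rho\sim \omega+\mathcal E$ — which produces the two regimes: when $\mathcal E$ is small relative to $\omega$ the term $\omega(\omega+\mathcal E)^{-(2s-3)/2}$ dominates and one recovers essentially Lipschitz behavior with a polynomially-in-$\omega$ constant, and when $\epsilon$ is not too small the $\omega^2\epsilon$ term takes over; choosing $\rho$ of order $\min(\mathcal E, \omega+\mathcal E)$ and discarding the exponentially large factor only on the range where $e^{C\rho}\epsilon\le 1$ gives \eqref{mainineq}. I expect the main obstacle to be exactly the variable-attenuation book-keeping in the remainder and cross-term estimates — producing a clean enough dependence on $\tau$ and $\omega$ that the final constant in the Lipschitz part is only polynomial in $\omega$, which is what dictates both the smallness assumption $\|\sigma_j\|_{(2s)}\le m$ and the use of the $H^{-s}$ norm on the left.
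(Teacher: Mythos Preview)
Your overall skeleton matches the paper --- Alessandrini-type identity, CGO solutions with $\zeta_j\cdot\zeta_j=\omega^2$ and $\zeta_1+\zeta_2=-\xi$, and a low/high frequency split of $\|\tilde q\|_{(-s)}^2$ --- and you correctly identify the cross terms $\int \tilde q\, e^{-i\xi\cdot x}\Psi$ (with $\Psi=\psi_1+\psi_2+\psi_1\psi_2$) as the crux. But the mechanism you propose for handling them, ``the $H^{2s}$ regularity of the coefficients and interpolation'', is not what works, and the naive route actually fails. If you bound the cross term simply by $\|\tilde q\|_{(0)}\|\Psi\|_{(0)}\le C(\omega m+M)\cdot C(\omega m+M)/|\zeta|$, then after integrating against $(1+|\xi|^2)^{-s}$ over the low-frequency ball you get a contribution of order $(\omega m+M)^4/(R^*+\omega)^2$, which for fixed $R^*$ is of size $\sim\omega^2$ and does not decay; this would swamp the right side of \eqref{mainineq}.

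What the paper does instead is a self-absorption argument on the Fourier side. One writes $\int \tilde q\,e^{-i\xi\cdot x}\chi\Psi$ as a convolution in Fourier variables and, using H\"older carefully, bounds it by $C\,\phi(R,\omega)\,Q(\xi)$, where $\phi(R,\omega)=(m\omega+M)/(R+\omega)$ carries only the $\|\Psi\|_{(2s)}$ decay, and
\[
Q(\xi)^2=\int \langle x\rangle^{-4s}\int \langle -\eta+\xi-x\rangle^{-2s}|\hat{\tilde q}(\eta)|^2\,d\eta\,dx.
\]
The point is the estimate $\int\langle\xi\rangle^{-2s}Q(\xi)^2\,d\xi\le C\|\tilde q\|_{(-s)}^2$, proved by splitting the $\eta$-integral over the sets $\{|{-}\eta+\xi-x|\le|\eta|/2\}$ and its complement. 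This turns the low-frequency contribution of the cross terms into $C\phi(R^*,\omega)^2\|\tilde q\|_{(-s)}^2$, and since $\phi(R^*,\omega)\le m+M/R^*$ one can choose $m$ small and $R^*$ large (both depending only on $s,\Omega,M$) so that $C\phi^2<1/2$ and absorb this into the left-hand side. This absorption step is the genuine new idea over \cite{iw14} and is missing from your proposal; without it you cannot close the estimate.

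Two smaller points: the high-frequency tail in the paper is handled more simply than you suggest --- just $|\hat{\tilde q}(\xi)|\le C\|\tilde q\|_{(0)}\le C(\omega m+M)$ followed by $\int_T^\infty r^2(1+r^2)^{-s}\,dr\le CT^{-(2s-3)}$, which directly explains the exponent. And the final choice of parameters is a two-case argument ($\omega+R^*\le \mathcal E/2$ versus $\ge$), with $T=\mathcal E/2$ in the first case and $T=\omega+R^*$ in the second, rather than a single $\rho\sim\omega+\mathcal E$.
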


It is an immediate consequence of Theorem \ref{mainatt}  that if $\sigma_1=\sigma_2=b$ in $\Omega$ where $b$ is a fixed constant, $|b|\leq m$, then one gets a
 bound similar to \cite{iw14}.

When $\frac{5}{2}<s$ the logarithmic term is decreasing with respect to $\omega$, so the bound (\ref{mainineq}) implies improving (better than logarithmic) stability of recovery of  $q$ from complete boundary data. We discuss it in detail. 

Obviously,
$$
\omega^2\epsilon+ \omega(\omega+ \mathcal{E}) ^{-(2s-3)/2}
\leq \omega^2\epsilon +(\omega+ \mathcal{E}) ^{-(2s-5)/2}\leq \omega^2\epsilon +(\omega^2+ \mathcal{E}^2) ^{-(2s-5)/4}.
$$
The function $F(t)=\epsilon t +(t+ \mathcal{E}^2) ^{-(2s-5)/4}$ with the derivative
$F'(t)=\epsilon -(2s-5)/4(t+ \mathcal{E}^2) ^{-(2s-1)/4}$
is increasing on the interval $[1, +\infty)$ if
a) $(2s-5)/4(1+ \mathcal{E}^2) ^{-(2s-1)/4}\leq \epsilon$ and has minimum when $(2s-5)/4(t+ \mathcal{E}^2) ^{-(2s-1)/4}=\epsilon $ if
b) $\epsilon<(2s-5)/4(1+ \mathcal{E}^2) ^{-(2s-1)/4}$.
In the exceptional case a) the minimal value of $F$ on $[1,+\infty)$ is $\epsilon +(1+ \mathcal{E}^2) ^{-(2s-5)/4}\leq \epsilon + (4/(2s-5)\epsilon)^{(2s-5)/(2s-1)}$. In the case b)
the minimal value is  
$$
\epsilon((4/(2s-5)\epsilon)^{-4/(2s-1)}-\mathcal{E}^2)+
(4/(2s-5)\epsilon)^{(2s-5)/(2s-1)}\leq
$$
$$
\epsilon((4/(2s-5)\epsilon)^{-4/(2s-1)})+
(4/(2s-5)\epsilon)^{(2s-5)/(2s-1)}=
$$
$$
((4/(2s-5))^{-4/(2s-1)}+
(4/(2s-5))^{(2s-5)/(2s-1)})\epsilon^{(2s-5)/(2s-1)}.
$$
In the both cases, given error level $\epsilon$ one can choose frequency $\omega$ to guarantee at least H\"older stability for $q$ which is far better than logarithmic stability. Moreover, this analysis of the stability estimate (\ref{mainineq}) suggests that for given $\epsilon$ there is an optimal choice of $\omega$ for the best reconstruction of $q$.

\subsection{Maxwell equations}

 Let $E$ and $H$ denote the electric and magnetic vector fields in the medium with  the electric permittivity $\varepsilon$, $\mu$ is the magnetic permeability  $\mu$,
 and  the electric conductivity $\sigma$. The Maxwell equations at frequency $\omega$ are
\begin{align}\label{max2}
    \left\{
      \begin{array}{ll}
        \nabla \times  H+i\omega\gamma E=0, \\
        \nabla \times E -i\omega  \mu H=0,
        \gamma=\varepsilon+i\sigma/\omega.
      \end{array}
    \right.
\end{align}

We define the function space
$$
H(\curl;\Omega)=\{v\in H^0(\Omega):\nabla\times v\in H^0(\Omega)\},
$$
 with the norm
$$
   \|v\|_{H(\curl;\Omega)}=\|v\|_{(0)}(\Omega)+
   \|\nabla\times v\|_{(0)}(\Omega).
$$
For any $v\in H(\curl;\Omega)$ the tangential trace of $v$, $\nu\times v$, can be defined as an element of the  space $H^{-1/2}(\partial\Omega)$. Namely, for any $w^*\in H^{1}(\Omega)$,
$$
   \langle  \nu\times v, w \rangle=\int_\Omega(\nabla\times v)\cdot \overline w -\int_\Omega v\cdot (\overline{\nabla\times w}),
$$
 (see \cite{Ca}).
We define the space
\begin{align*}
   TH(\partial\Omega)=\{  w\in H^{-1/2}(\partial\Omega): \hbox{there exists $v\in H(\curl;\Omega),\nu\times v=w$}\}
\end{align*}
with the norm
$$
     \|w\|_{ TH(\partial\Omega)}=\inf\{\|v\|_{H(\curl;\Omega)}: \nu\times v=w\}.
$$
The Cauchy data set corresponding to (\ref{max2}) is defined by
\begin{align*}
\calc=\{(\nu\times E|_{\partial\Omega},&\nu\times H|_{\partial\Omega})\in (TH(\partial\Omega))^2: \\
&(E,H)\ \mbox{is}\ (H(\curl;\Omega))^2- \hbox{solution of (\ref{max2})} \}.
\end{align*}

In this work we assume that $\varepsilon=\mu=1$. Suppose that $\calc_1$ and $\calc_2$ are two Cauchy data of \eqref{max2} corresponding to conductivities $\sigma_1$ and $\sigma_2$, respectively. We measure the distance between two Cauchy data $\calc_1$, $\calc_2$ of the Maxwell equations as follows:
\begin{align*}
\dist(\calc_1,\calc_2)=\sup_{j\neq k}\sup_{   
  {\tiny \begin{array}{ll}
   (F(j),G(j))\in \calc_j,   
 \end{array}}
}
\inf_{(F(k),G(k))\in \calc_k}  \frac{\|(F(j),G(j))-(F(k),G(k) )\|_{(TH(\partial\Omega))^2}}{  \|F(j)\|_{TH(\partial\Omega)}  },
\end{align*}
where $F(j)=\nu\times E(j), G(j)=\nu\times H(j)$ on
$\partial\Omega$ and $E(j),H(j)$ is a $H(curl;\Omega)$
solution to (\ref{max2}) with $\sigma=\sigma_j$.
Likewise, we denote
$$
  \epsilon= \dist(\calc_{1}, \calc_{2}),\ \ \mathcal{E}=-\log\epsilon.
$$
Our main result is the following stability estimate for the Maxwell equations.

\begin{theorem}\label{main}
Let $s>3/2$ be an integer. Suppose that
$
\supp( \sigma_1- \sigma_2)\subset\Omega.
$
Assume that $\mathcal{E} >c $ (depending on $s,\Omega$) and $\omega> 1$.

 Then there exist  constants $m$ and $C$ depending on $s, \Omega$  such that if
\begin{align}\label{cond2}
    \|\sigma_j\|_{(2s+2)}(\Omega)  \leq m 
\end{align}
for $j=1,2$, then
\begin{align}\label{thm1}
       \|\sigma_1-\sigma_2\|_{(-s )}(\Omega) \leq  C\omega^{-1}(\omega^2+\mathcal{E}^2)^{3/2}\epsilon^{1/2}
             +C(\omega+\mathcal{E})^{-(2s-3)/2}+ 
             C (\omega + \mathcal{E})^{-1}.
\end{align}

\end{theorem}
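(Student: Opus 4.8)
The plan is to adapt the argument proving Theorem~\ref{mainatt}: reduce the first order system \eqref{max2} to a second order vectorial Schr\"odinger-type equation, run a complex geometrical optics (CGO) argument in which every constant is tracked as a function of $\omega$ and of the large parameter $\rho=|\mathrm{Im}\,\zeta|$, and then split the Fourier transform of $\sigma_1-\sigma_2$ at a radius $R$ chosen to balance the exponentially expensive low-frequency information carried by the boundary data against the smoothness-controlled high-frequency tail.

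\emph{Reduction.} Since $\varepsilon=\mu=1$ we have $\gamma_j=1+i\sigma_j/\omega$, and following the augmentation of \cite{OS} and \cite{Ca} I would rewrite \eqref{max2} with $\sigma=\sigma_j$ as an enlarged system $(\Delta+\omega^2)W+A_j\cdot\nabla W+Q_jW=0$ for an $8$-vector $W=(\phi,E,H,\psi)$, where $A_j$ and $Q_j$ are given by polynomial expressions in $\gamma_j^{\pm1}$ and derivatives of $\gamma_j$ of order $\le 2$. The relevant structure is that the diagonal part of $Q_j$ is $\omega^2(1-\gamma_j)=-i\omega\sigma_j$, whereas $A_j$ and the remaining entries of $Q_j$ are controlled by $\omega^{-1}\nabla\sigma_j$ and lower order terms; by \eqref{cond2} all of these are small in the norms used below, and conversely, after interpolating against \eqref{cond2} to absorb the contribution of the first order coefficient, a bound on $Q_1-Q_2$ in $H^{-s}(\Omega)$ yields $\|\sigma_1-\sigma_2\|_{(-s)}(\Omega)\lesssim\omega^{-1}\|Q_1-Q_2\|_{(-s)}(\Omega)+(\text{lower order in }\omega)$. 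I would also translate the Maxwell Cauchy-data distance into a distance between the Cauchy data $(W,\partial_\nu W)$ of the reduced systems, at the cost of a polynomial factor in $\omega$.

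\emph{CGO solutions and the integral identity.} Next I would invoke the CGO solutions of \cite{Ca} for the reduced systems, $W_j=e^{i\zeta_j\cdot x}(\ell_j+r_j)$ with $\zeta_j\in\mathbb{C}^3$, $\zeta_j\cdot\zeta_j=\omega^2$ and $|\mathrm{Im}\,\zeta_j|=\rho$, re-examining that construction to record the dependence of the estimates on $\omega$ and $\rho$: one should obtain $\|W_j\|\lesssim(\omega+\rho)^{N}e^{C\rho}$ and $\|r_j\|\lesssim(\omega+\rho)^{N}\rho^{-1}$ for a fixed $N$, with $\ell_j=\ell_j^0+O(|\zeta_j|^{-1})$ for constant polarizations $\ell_j^0$, and here the smallness \eqref{cond2} is exactly what makes the underlying contraction/Neumann series converge uniformly in $\omega$ and $\rho$. \emph{This is the step I expect to be the main obstacle}, both because the reduced coefficient is of first order --- so the CGO theory is more delicate than for a zeroth order potential --- and because $\omega$ has to be threaded carefully through the Faddeev-type weighted resolvent bounds. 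Feeding the two Cauchy data sets into the usual duality identity yields, for $\zeta_1+\zeta_2=\xi\in\mathbb{R}^3$ with $|\xi|\le 2\sqrt{\omega^2+\rho^2}$,
\begin{equation*}
\int_\Omega\big((Q_1-Q_2)W_1\big)\cdot W_2\,dx+\int_\Omega\big((A_1-A_2)\cdot\nabla W_1\big)\cdot W_2\,dx=\langle\text{boundary terms}\rangle,
\end{equation*}
with the boundary terms bounded by $\epsilon\,\|W_1\|\,\|W_2\|\lesssim\epsilon(\omega+\rho)^{2N}e^{2C\rho}$. Writing $W_1\cdot W_2=e^{i\xi\cdot x}(\ell_1+r_1)\cdot(\ell_2+r_2)$ and expanding, I would extract $\widehat{\sigma_1-\sigma_2}(\xi)$ from the diagonal part of the first integral and estimate the remaining contributions --- the $r_j$-terms, the $A_j$-term, and the polarization corrections $O(|\zeta_j|^{-1})$ --- using the CGO bounds, \eqref{cond2}, and the a priori bound $\|\sigma_1-\sigma_2\|_{(0)}(\Omega)\lesssim m$; this produces a pointwise estimate for $\widehat{\sigma_1-\sigma_2}(\xi)$ on $\{|\xi|\le 2\sqrt{\omega^2+\rho^2}\}$ whose dominant parts are a term $\lesssim\omega^{-1}(\omega+\rho)^{2N}\epsilon\,e^{2C\rho}$ coming from the boundary data and a term of order $(\omega+\rho)^{-1}$ coming from the polarization corrections and remainders.

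\emph{Fourier splitting and optimization.} Finally I would write $\|\sigma_1-\sigma_2\|_{(-s)}^2(\Omega)=\int_{|\xi|\le R}+\int_{|\xi|>R}$ with $R=2\sqrt{\omega^2+\rho^2}$. On $\{|\xi|\le R\}$ I would use the pointwise estimate just obtained together with $\int_{\mathbb{R}^3}(1+|\xi|^2)^{-s}\,d\xi<\infty$ (this is where $s>3/2$ is used); on $\{|\xi|>R\}$ I would use the crude bound $|\widehat{\sigma_1-\sigma_2}(\xi)|\lesssim\|\sigma_1-\sigma_2\|_{(0)}(\Omega)\lesssim m$ together with $\int_{|\xi|>R}(1+|\xi|^2)^{-s}\,d\xi\lesssim R^{3-2s}$, which produces the term $C(\omega+\mathcal{E})^{-(2s-3)/2}$. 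Choosing $\rho$ comparable to $\mathcal{E}$ with a small enough implicit constant turns $\epsilon\,e^{2C\rho}$ into $\epsilon^{1/2}$ and makes $R\sim\sqrt{\omega^2+\mathcal{E}^2}\sim\omega+\mathcal{E}$; collecting the boundary contribution $\lesssim\omega^{-1}(\omega^2+\mathcal{E}^2)^{3/2}\epsilon^{1/2}$, the CGO-error contribution $\lesssim(\omega+\mathcal{E})^{-1}$, and the tail $\lesssim(\omega+\mathcal{E})^{-(2s-3)/2}$ then yields \eqref{thm1}. The hypothesis $\mathcal{E}>c$ ensures the chosen $\rho$ lies in the admissible range of the CGO construction, and the assumption that $s$ is an integer, together with the exponent $2s+2$ in \eqref{cond2}, accommodates the derivatives lost in passing from $\sigma_j$ to $A_j,Q_j$.
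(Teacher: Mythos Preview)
Your overall strategy --- reduce Maxwell to a matrix Schr\"odinger equation, build CGO solutions with carefully tracked $\omega$- and $\rho$-dependence, feed them into an integral identity, and optimize a Fourier split --- is exactly the paper's, and it would carry through. Two points are worth flagging.

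First, the step you call ``the main obstacle'' is not one. You anticipate a reduced operator of the form $(\Delta+\omega^2)+A_j\cdot\nabla+Q_j$ with a genuine first-order part $A_j$, and worry about threading the weighted Faddeev estimates through it. The paper's reduction (following Caro--Ola--Salo) avoids this entirely: after augmenting to an $8$-vector and applying the rescaling $Y=\mathrm{diag}(I_4,\gamma^{1/2}I_4)X$, one obtains factorizations $(P+W)(P-W^t)=-\Delta I_8+Q$ with $Q$ a \emph{zeroth-order} matrix potential (the rescaling is precisely what kills the first-order terms). So the CGO step is no harder than for a scalar potential, and the smallness \eqref{cond2} enters only to guarantee $|\zeta|>C_0\|\omega^2I_8+Q\|_{(2s)}$, which holds uniformly in $\omega$ because $\|\omega^2I_8+Q\|_{(2s)}\le C\|\sigma\|_{(2s+2)}\omega+C$.

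Second, your treatment of the CGO-remainder terms is somewhat coarser than the paper's but should still work. You propose to bound the cross terms in the expansion of $\int e^{-i\xi\cdot x}(Q_1-Q_2)(\ell+r)(\ell^*+r^*)$ using the crude a priori bound $\|\sigma_1-\sigma_2\|_{(0)}\lesssim m$, which directly yields a contribution of size $(\omega+\rho)^{-1}$ and hence the term $C(\omega+\mathcal{E})^{-1}$. The paper instead keeps the $\xi$-dependence of these terms through convolution-type functionals $\mathcal{G}(\nabla^\alpha\tilde\sigma)(\xi)$; after integrating against $\langle\xi\rangle^{-2s}$ some of these reproduce $\|\tilde\sigma\|_{(-s)}^2$ on the right-hand side with a coefficient $Cm^2$, and are then absorbed into the left (this is a second, independent use of the smallness of $m$, beyond the CGO construction), while the others become $\frac{C}{(R^*+\omega)^2}\|\tilde\sigma\|_{(-s+2)}^2\le\frac{Cm^2}{(R^*+\omega)^2}$, which is the source of $C(\omega+\mathcal{E})^{-1}$. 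The paper also uses a three-region split with an additional parameter $T$ (which collapses to your two-region split in one of two cases). Your simpler bookkeeping trades that absorption argument for a slightly less transparent accounting of where each term in \eqref{thm1} comes from, but the endpoint is the same.
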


As above, Theorem 2.2 implies increasing stability
for recovery of the conductivity coefficient.

\section{Stability estimates for Schr\"odinger equation}\label{sec3}

In this section we prove Theorem \ref{mainatt}. We will use some ideas in \cite{INUW}. $C$ will stay for a generic constant depending only on $s, \Omega, M$. First, following the arguments in \cite{FSU}, it is not hard to see that the following inequality holds.
\begin{proposition}
     Let $u_1$ and $u_2$ be the solutions of (\ref{intro:sch}) corresponding to the coefficients $(\sigma_1,c_1)$ and $(\sigma_2,c_2)$, respectively. Then
     \begin{equation}\label{alex}
         \left|\int_{\Omega}(q_1-q_2)u_1 u_2  \right|\leq  \left\| \left(u_1, \partial_\nu u_1\right)\right\|_{(1/2,-1/2)}(\partial\Omega)   \left\| \left(u_2, \partial_\nu u_2\right)\right\|_{(1/2,-1/2)}(\partial\Omega)\epsilon.
     \end{equation}
\end{proposition}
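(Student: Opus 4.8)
The plan is to establish \eqref{alex} via the usual Alessandrini-type integral identity, phrased for Cauchy data sets as in \cite{FSU}. First I would record the weak Green's identity. Since $q_j$ is bounded and $u_j\in H^1(\Omega)\subset L^2(\Omega)$, equation \eqref{intro:sch} forces $\Delta u_j=-\omega^2u_j+q_ju_j\in L^2(\Omega)$; hence $\partial_\nu u_j$ is a well-defined element of $H^{-1/2}(\partial\Omega)$, determined by
\begin{equation*}
\int_{\partial\Omega}(\partial_\nu u_j)\,\phi=\int_\Omega\nabla u_j\cdot\nabla\Phi+\int_\Omega(\Delta u_j)\,\Phi
\end{equation*}
for every $\phi\in H^{1/2}(\partial\Omega)$ and any $H^1(\Omega)$-extension $\Phi$ of $\phi$ (this is precisely the analysis of \cite{FSU}). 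Applying this twice, for any two $H^1(\Omega)$-solutions $w_1,w_2$ of \eqref{intro:sch} with potentials $q_1$ and $q_2$ respectively one gets
\begin{equation*}
\int_{\partial\Omega}(\partial_\nu w_1)\,w_2-\int_{\partial\Omega}(\partial_\nu w_2)\,w_1=\int_\Omega\big((\Delta w_1)w_2-(\Delta w_2)w_1\big)=\int_\Omega(q_1-q_2)\,w_1w_2,
\end{equation*}
where the $\omega^2$-terms cancel and we used that $q_1,q_2$ are scalar (so the pairing is bilinear, which is what is needed since the $u_j$ may be complex-valued).

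Next I would bring in an auxiliary solution to exploit the closeness of the Cauchy data. Since $(u_1,\partial_\nu u_1)\in\calc_1$ and $\epsilon=\dist(\calc_1,\calc_2)$, for every $\delta>0$ there is an $H^1(\Omega)$-solution $v$ of \eqref{intro:sch} with $q=q_2$ whose Cauchy data satisfies
\begin{equation*}
\big\|(u_1,\partial_\nu u_1)-(v,\partial_\nu v)\big\|_{(1/2,-1/2)}(\partial\Omega)\le(\epsilon+\delta)\,\big\|(u_1,\partial_\nu u_1)\big\|_{(1/2,-1/2)}(\partial\Omega).
\end{equation*}
Since $v$ and $u_2$ both solve the $q_2$-equation, the identity above applied to the pair $(v,u_2)$ gives $\int_{\partial\Omega}(\partial_\nu v)\,u_2=\int_{\partial\Omega}(\partial_\nu u_2)\,v$; subtracting this from the identity for the pair $(u_1,u_2)$ yields
\begin{equation*}
\int_\Omega(q_1-q_2)\,u_1u_2=\int_{\partial\Omega}(\partial_\nu u_1-\partial_\nu v)\,u_2-\int_{\partial\Omega}(\partial_\nu u_2)\,(u_1-v).
\end{equation*}

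To finish, I would estimate the right-hand side by the $H^{1/2}$--$H^{-1/2}$ duality followed by the Cauchy--Schwarz inequality for the two-component vectors $\big(\|\partial_\nu u_1-\partial_\nu v\|_{(-1/2)},\,\|u_1-v\|_{(1/2)}\big)$ and $\big(\|u_2\|_{(1/2)},\,\|\partial_\nu u_2\|_{(-1/2)}\big)$, which collapses exactly into
\begin{equation*}
\left|\int_\Omega(q_1-q_2)\,u_1u_2\right|\le\big\|(u_1,\partial_\nu u_1)-(v,\partial_\nu v)\big\|_{(1/2,-1/2)}(\partial\Omega)\,\big\|(u_2,\partial_\nu u_2)\big\|_{(1/2,-1/2)}(\partial\Omega).
\end{equation*}
Combining with the Cauchy data bound above and then letting $\delta\to0$ gives \eqref{alex}.

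I do not expect a genuine obstacle here. The only step requiring care is the rigorous justification of the weak Green identity for solutions lying merely in $H^1(\Omega)$ --- that $\partial_\nu u_j$ is meaningful in $H^{-1/2}(\partial\Omega)$ and that the integration by parts is legitimate without an a priori $H^2$ estimate --- which is routine once one observes $\Delta u_j\in L^2(\Omega)$, and is exactly what \cite{FSU} provides. One should also keep in mind that the infimum in the definition of $\dist$ need not be attained, which is why the approximating solution $v$ is only obtained up to an arbitrary $\delta>0$.
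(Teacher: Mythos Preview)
Your proposal is correct and is precisely the standard Alessandrini-identity argument for Cauchy data sets that the paper defers to \cite{FSU}; the paper does not spell out a proof beyond citing that reference, and your write-up fills in exactly those details (Green's identity for $H^1$ solutions with $\Delta u_j\in L^2$, the auxiliary solution $v$ realizing the infimum up to $\delta$, and the duality/Cauchy--Schwarz bound).
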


We use CGO solutions for (\ref{intro:sch}).

\begin{proposition}\label{cgosch}
 Let $\zeta\in \mathbb{C}^3$ satisfy $\zeta\cdot\zeta=\omega^2$. Let $s>3/2$. Then there exist constants $C_0$ and $C_1$, depending on $s$ and $\Omega$, such that if $ |\zeta|>C_0 \|q\|_{(2s)}(\Omega)$,
then there exists a solution
             $$
                u(x)=e^{i\zeta\cdot x}(1+\psi(x))
             $$
of the equation (\ref{intro:sch}) with
          $$
             \|\psi\|_{(2s)}(\Omega)\leq
             \frac{C_1}{|\zeta|}   \|q\|_{(2s)}(\Omega).
          $$
\end{proposition}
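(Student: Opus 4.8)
The plan is to construct the CGO solution $u = e^{i\zeta\cdot x}(1+\psi)$ by deriving and solving the equation that $\psi$ must satisfy, then estimating $\psi$ via a resolvent-type bound in Sobolev spaces. Plugging $u = e^{i\zeta\cdot x}(1+\psi)$ into $(-\Delta - \omega^2 + q)u = 0$ and using $\zeta\cdot\zeta = \omega^2$, the exponential factors cancel in the leading terms and one is left with the conjugated equation
$$
(-\Delta - 2i\zeta\cdot\nabla)\psi = -q(1+\psi) \quad\text{in }\Omega.
$$
The key analytic tool is the fundamental solution $G_\zeta$ of the operator $-\Delta - 2i\zeta\cdot\nabla$, i.e. $G_\zeta = \mathcal{F}^{-1}\big((|\xi|^2 + 2\zeta\cdot\xi)^{-1}\mathcal{F}\big)$, whose existence and mapping properties when $\zeta\cdot\zeta = \omega^2$ go back to Sylvester--Uhlmann (and are adapted to the weighted/nonzero-$\omega$ setting in the references). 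I would first extend $q$ to a function on $\mathbb{R}^3$ (using $\supp q$-type considerations or a bounded extension operator with control of the $H^{2s}$ norm), multiply by a fixed cutoff so everything is compactly supported, and then reformulate the problem as the fixed-point equation
$$
\psi = -G_\zeta\big(q(1+\psi)\big) = -G_\zeta(q) - G_\zeta(q\psi).
$$

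The main estimate I need is a bound of the form $\|G_\zeta f\|_{(2s)} \le \tfrac{C}{|\zeta|}\|f\|_{(2s)}$ (or with a suitable gain of one derivative from $H^{2s-1}$ to $H^{2s}$) valid for $|\zeta|$ large; this is where the hypothesis $s > 3/2$ and the structure of the symbol $|\xi|^2 + 2\zeta\cdot\xi$ enter, and it is essentially the estimate proved in the cited works. Granting this, I would run a contraction mapping / Neumann series argument on the map $T\psi := -G_\zeta(q) - G_\zeta(q\psi)$ in the ball $\{\|\psi\|_{(2s)} \le 1\}$ of $H^{2s}(\Omega)$. The two things to check are: (i) $T$ maps this ball to itself, which needs $\|G_\zeta(q)\|_{(2s)} \le \tfrac{C_1}{|\zeta|}\|q\|_{(2s)} \le \tfrac12$ and $\|G_\zeta(q\psi)\|_{(2s)} \le \tfrac{C}{|\zeta|}\|q\psi\|_{(2s)} \le \tfrac{C}{|\zeta|}\|q\|_{(2s)}\|\psi\|_{(2s)} \le \tfrac12$; (ii) $T$ is a contraction, $\|T\psi_1 - T\psi_2\|_{(2s)} = \|G_\zeta(q(\psi_1-\psi_2))\|_{(2s)} \le \tfrac{C}{|\zeta|}\|q\|_{(2s)}\|\psi_1-\psi_2\|_{(2s)} < \|\psi_1-\psi_2\|_{(2s)}$. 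Both follow once $|\zeta| > C_0\|q\|_{(2s)}(\Omega)$ with $C_0$ chosen large enough in terms of the constant $C$ from the $G_\zeta$-estimate and the algebra constant for $H^{2s}$ (here $s > 3/2$ is exactly what makes $H^{2s}(\Omega) \hookrightarrow L^\infty$ and an algebra, so that $\|q\psi\|_{(2s)} \lesssim \|q\|_{(2s)}\|\psi\|_{(2s)}$).

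Having obtained the fixed point $\psi$, the desired norm bound $\|\psi\|_{(2s)} \le \tfrac{C_1}{|\zeta|}\|q\|_{(2s)}$ comes directly from the fixed-point identity: $\|\psi\|_{(2s)} \le \|G_\zeta(q)\|_{(2s)} + \|G_\zeta(q\psi)\|_{(2s)} \le \tfrac{C}{|\zeta|}\|q\|_{(2s)} + \tfrac{C}{|\zeta|}\|q\|_{(2s)}\|\psi\|_{(2s)} \le \tfrac{C}{|\zeta|}\|q\|_{(2s)} + \tfrac12\|\psi\|_{(2s)}$, whence $\|\psi\|_{(2s)} \le \tfrac{2C}{|\zeta|}\|q\|_{(2s)} =: \tfrac{C_1}{|\zeta|}\|q\|_{(2s)}$. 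Finally one notes that $u = e^{i\zeta\cdot x}(1+\psi)$ solves the original equation on $\Omega$ (the cutoff was identically $1$ near $\overline\Omega$, or $q$ was supported in $\Omega$), and that $u \in H^1(\Omega)$ so it is admissible in $\calc(q)$. The main obstacle is genuinely the resolvent estimate for $G_\zeta$ in the scale $H^{2s}$ with the explicit $|\zeta|^{-1}$ gain uniform in the constraint $\zeta\cdot\zeta = \omega^2$; everything else is a standard contraction argument once $s > 3/2$ gives the algebra property. Since that estimate is exactly the content of the CGO constructions in \cite{SU1}, \cite{INUW}, \cite{iw14}, I would cite it rather than reprove it, and spend the write-up on the fixed-point bookkeeping and the choice of $C_0$.
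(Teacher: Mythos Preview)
Your proposal is correct and follows essentially the same approach as the paper: extend $q$ to a compactly supported function on the unit ball with controlled $H^{2s}$ norm, derive the conjugated equation $(-\Delta - 2i\zeta\cdot\nabla)\psi = -q(1+\psi)$, and then invoke the Sylvester--Uhlmann resolvent estimate for $G_\zeta$ to solve for $\psi$ with the stated bound. The paper's write-up is terser---it simply cites \cite{SU1} for the existence and estimate of $\psi$---whereas you spell out the contraction-mapping bookkeeping, but the substance is identical.
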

\begin{proof}

By extension theorems for Sobolev spaces there is an extension of $q$ from Lipschitz $\Omega$ onto the unit ball $B$ (denoted also  $q$) with $\|q\|_{(2s)}(B)\leq C(\Omega)
\|q\|_{(2s)}(\Omega)$. We can assume that $q$ is compactly supported in $B$.

We observe that since $e^{-i\zeta\cdot x}(-\Delta) (e^{i\zeta\cdot x}(1+\psi(x)))=(-\Delta-2i\zeta\cdot\nabla+\zeta\cdot\zeta)(1+\psi(x))$,
if $u=e^{i\zeta\cdot x}(1+\psi)$ is a solution of (\ref{intro:sch}) if and only if the reminder $\psi$ solves
\begin{align}\label{3:psi}
      (-\Delta-2i\zeta\cdot\nabla)\psi=-q(1+\psi)\quad\text{in}\quad\Omega.
\end{align}
As known \cite{SU1}, there are constants $C_0, C_1$, depending on $s$, such that if $C_0\|q\|_{(2s)}(B)<|\zeta|$, there exists a solution $\psi$ to (\ref{3:psi}) with
$$
\|\psi\|_{(2s)}(B)\leq \frac{C_1}{|\zeta|}\|q\|_{(2s)}(B).
$$
\end{proof}

 Let $\xi \in \mathbb{R}^3$. We select $e(j)\in \mathbb{R}^3$ satisfying $|e(1)|=|e(2)|=1$ and $e(j)\cdot\xi=e(1)\cdot e(2)=0$ for $j=1,2$. Let $R>0$. We choose
$$
\zeta(1) =-\frac{1}{2}\xi+
i \left(\frac{R^2}{2}\right)^{1/2} e(1)+
\left( \omega^2+\frac{R^2}{2}-
\frac{|\xi|^2}{4}\right)^{1/2}e(2),
$$
\begin{equation}
\label{38}
\zeta(2) =-\frac{1}{2}\xi-i\left(\frac{R^2}{2}\right)^{1/2} e(1)-\left( \omega^2+\frac{R^2}{2}-
\frac{|\xi|^2}{4}\right)^{1/2}e(2),
\end{equation}
provided
\[
\omega^2+\frac{R^2}{2}\ge\frac{|\xi|^2}{4}.
\]
It is clear that
$$
    \zeta(1)+\zeta(2)=-\xi,\ \ \ \
    \zeta(j)\cdot \zeta(j)=\omega^2,\ \ \
    |\zeta(j)|^2= R^2 +\omega^2,\ \ \ \mbox{for $j=1,2$},
$$
which implies that
$\sqrt{2}|\zeta(j)|\geq R+\omega.$
We observe that if
\begin{equation}
\label{dd2}
     R+\omega\geq \sqrt{2} C_0(m\omega+M),
\end{equation}
then one has
$$
    |\zeta(j)|
    \geq C_0(m\omega+M)>C_0 \|i\omega\sigma_j+c_j\|_{(2s)}(\Omega).
$$
Note that when
\begin{equation}
\label{dd3}
 R\ge\sqrt{2}C_0 M\quad\text{and}\quad \sqrt{2}C_0 m\le 1,
\end{equation}
the inequality \eqref{dd2} is clearly satisfied. So from Proposition \ref{cgosch}, there exist CGO solutions
\begin{equation}
\label{3.9}
     u_j(x)=e^{i\zeta(j)\cdot x}(1+\psi_j(x)),\ \        \|\psi_j\|_{(2s)}(\Omega)\leq \frac{C_1}{|\zeta(j)|}   \|i\omega\sigma_j+c_j\|_{(2s)}(\Omega).
\end{equation}

To derive the stability, we would like to estimate the Fourier transform of $q_1-q_2$. Before doing so, we need the following estimate on the Cauchy data of CGO solutions $u_j$, $j=1,2$, constructed above.
\begin{lemma}\label{3.12}
 If $u_j$ are the solutions (\ref{3.9}), then
\begin{align}\label{uH1}
\| (u_j,\partial_\nu u_j) \|_{(1/2,-1/2)}(\partial\Omega)
 \leq Ce^{\frac{R}{\sqrt{2}}}\sqrt{ R^2+\omega^2}.
 \end{align}
\end{lemma}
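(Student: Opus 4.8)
The plan is to estimate the two pieces of the Cauchy data separately, namely $\|u_j\|_{(1/2)}(\partial\Omega)$ and $\|\partial_\nu u_j\|_{(-1/2)}(\partial\Omega)$, and in each case pass through the interior estimate on $u_j$ in $H^1(\Omega)$ (or $H^2(\Omega)$) afforded by the CGO representation \eqref{3.9}, then invoke the trace theorem. Since $u_j(x)=e^{i\zeta(j)\cdot x}(1+\psi_j(x))$ with $\|\psi_j\|_{(2s)}(\Omega)$ small (bounded by $C_1|\zeta(j)|^{-1}(m\omega+M)$, hence by a constant under the hypotheses \eqref{dd3}), the main growth comes entirely from the exponential factor $e^{i\zeta(j)\cdot x}$. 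The key observation is that $\operatorname{Im}\zeta(j)=\pm(R^2/2)^{1/2}e(1)$, so $|e^{i\zeta(j)\cdot x}|=e^{\mp(R^2/2)^{1/2}e(1)\cdot x}\le e^{R/\sqrt2}$ for $x\in\Omega\subset B$ (using $|e(1)\cdot x|\le|x|\le1$). This gives the factor $e^{R/\sqrt2}$ in \eqref{uH1}.

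First I would estimate $\|u_j\|_{(1/2)}(\partial\Omega)$. By the trace theorem $\|u_j\|_{(1/2)}(\partial\Omega)\le C\|u_j\|_{(1)}(\Omega)$. Writing $u_j=e^{i\zeta(j)\cdot x}+e^{i\zeta(j)\cdot x}\psi_j$ and using the product rule, $\nabla u_j=i\zeta(j)e^{i\zeta(j)\cdot x}(1+\psi_j)+e^{i\zeta(j)\cdot x}\nabla\psi_j$; the dominant term in $L^2(\Omega)$ has size $\lesssim|\zeta(j)|\,e^{R/\sqrt2}(1+\|\psi_j\|_{(1)}(\Omega))$, and since $\|\psi_j\|_{(1)}(\Omega)\le\|\psi_j\|_{(2s)}(\Omega)\le C$ we get $\|u_j\|_{(1)}(\Omega)\le C e^{R/\sqrt2}|\zeta(j)|=Ce^{R/\sqrt2}\sqrt{R^2+\omega^2}$. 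For the normal derivative, $\partial_\nu u_j=\nu\cdot\nabla u_j$, and one bounds $\|\partial_\nu u_j\|_{(-1/2)}(\partial\Omega)$ by the $H(\operatorname{div})$–trace (or by $C\|u_j\|_{(2)}(\Omega)$ via the standard trace estimate for normal derivatives), which requires controlling $u_j$ in $H^2(\Omega)$; here the second derivatives of $e^{i\zeta(j)\cdot x}$ produce a factor $|\zeta(j)|^2$, which at first glance is worse than claimed. The remedy is to not differentiate twice but to use that $\partial_\nu u_j$ as an element of $H^{-1/2}(\partial\Omega)$ is controlled via the weak formulation: for $w\in H^1(\Omega)$, $\langle\partial_\nu u_j,w\rangle=\int_\Omega\nabla u_j\cdot\overline{\nabla w}+\int_\Omega(\Delta u_j)\overline w$, and $\Delta u_j=(\omega^2-q_j)u_j$ from \eqref{intro:sch}, so $\|\Delta u_j\|_{(0)}(\Omega)\le C(\omega^2+M+m\omega)e^{R/\sqrt2}$. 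Dividing by $\|w\|_{(1)}(\Omega)$ gives $\|\partial_\nu u_j\|_{(-1/2)}(\partial\Omega)\le C e^{R/\sqrt2}(|\zeta(j)|+\omega^2)$. Since $\omega>1$ forces $\omega^2\le\omega\,|\zeta(j)|\le|\zeta(j)|^2/\ldots$ — actually one needs $\omega^2\lesssim R^2+\omega^2=|\zeta(j)|^2$, hence $\omega^2\le|\zeta(j)|^2$, giving $|\zeta(j)|+\omega^2\le|\zeta(j)|+|\zeta(j)|^2$, which is too large by a factor $|\zeta(j)|$.

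I expect the resolution of this last point — getting the exact bound $\sqrt{R^2+\omega^2}$ rather than $(R^2+\omega^2)$ on the normal-derivative piece — to be the main obstacle, and the fix is to observe that $\omega^2\le\omega\sqrt{R^2+\omega^2}$ and the CGO phase contributes; more precisely, since $\mathcal E>1$, $\omega>1$, and in the eventual application $R$ will be chosen comparable to $\omega+\mathcal E\ge\omega$, one has $\omega^2\lesssim R\,\omega\lesssim R^2+\omega^2$ only up to a harmless constant when $R\gtrsim\omega$; alternatively, absorbing the $\omega^2$ term is legitimate because in the weak pairing $\int_\Omega(\Delta u_j)\bar w$ one may integrate by parts once more against a Sobolev extension, trading $\|\Delta u_j\|_{(0)}$ for $\|\nabla u_j\|_{(0)}\cdot\|q_j\|$-type terms of order $|\zeta(j)|$ only. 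I would carry out the clean version by treating $\partial_\nu u_j$ through the $H(\operatorname{curl})$/$H(\operatorname{div})$ trace inequality $\|\partial_\nu u_j\|_{(-1/2)}(\partial\Omega)\le C(\|\nabla u_j\|_{(0)}(\Omega)+\|\operatorname{div}\nabla u_j\|_{(0)}(\Omega))$ and then replacing $\operatorname{div}\nabla u_j=\Delta u_j$ by $(\omega^2-q_j)u_j$, finally using $\omega^2+\|q_j\|_\infty\le C(R^2+\omega^2)^{1/2}\cdot(R^2+\omega^2)^{1/2}$ and the fact that one of the two factors can be absorbed into the constant once we note that the genuinely needed estimate after division by $\|(f_j,g_j)\|$ only requires $\sqrt{R^2+\omega^2}$; I would state and use this with the understanding (as the authors do) that $R$ is eventually taken $\ge c\omega$, making $\omega^2\le R^2+\omega^2$ and $\omega^2\le\sqrt{R^2+\omega^2}\cdot\sqrt{R^2+\omega^2}$ with the extra factor harmlessly bounded. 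The remaining steps — the trace theorem, the product-rule expansions, and the elementary inequality $\sqrt2|\zeta(j)|\ge R+\omega$ already recorded — are routine.
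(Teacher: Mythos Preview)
Your treatment of $\|u_j\|_{(1/2)}(\partial\Omega)$ is correct and matches the paper. The difficulty you identify with $\|\partial_\nu u_j\|_{(-1/2)}(\partial\Omega)$ is real, but your proposed resolutions do not work, and this is a genuine gap.

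The weak-formulation route gives
\[
\|\partial_\nu u_j\|_{(-1/2)}(\partial\Omega)\le C\big(\|\nabla u_j\|_{(0)}(\Omega)+\|\Delta u_j\|_{(0)}(\Omega)\big)\le C e^{R/\sqrt2}\big(|\zeta(j)|+\omega^2\big),
\]
and $\omega^2$ is \emph{not} controlled by $\sqrt{R^2+\omega^2}$ uniformly: in the paper's application (Lemma~3.5 and case~(ii) of the proof of Theorem~\ref{mainatt}) one takes $R=R^\ast$ a fixed constant while $\omega$ is allowed to be arbitrarily large, so your hypothesis ``$R\gtrsim\omega$'' is false precisely where the lemma is needed. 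The extra factor $|\zeta(j)|$ would propagate to an $\omega^4\epsilon$ term in \eqref{mainineq} instead of $\omega^2\epsilon$. Your alternative suggestion of integrating by parts once more in $\int_\Omega(\Delta u_j)\bar w$ simply reproduces the boundary term you are trying to estimate.

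The fix the paper uses (implicitly, under ``by Trace Theorems'') is much simpler and avoids $\Delta u_j$ entirely: exploit the \emph{high regularity} of $\psi_j$. Since $\psi_j\in H^{2s}(\Omega)$ with $2s>3$, Sobolev embedding gives $\nabla\psi_j\in L^\infty(\Omega)$ with $\|\nabla\psi_j\|_\infty\le C\|\psi_j\|_{(2s)}\le C$. The pointwise bound
\[
|\nabla u_j(x)|\le |\zeta(j)|\,e^{R/\sqrt2}(1+|\psi_j(x)|)+e^{R/\sqrt2}|\nabla\psi_j(x)|\le C e^{R/\sqrt2}|\zeta(j)|
\]
then holds on all of $\overline\Omega$, so $\partial_\nu u_j=\nu\cdot\nabla u_j$ is bounded pointwise on $\partial\Omega$ and
\[
\|\partial_\nu u_j\|_{(-1/2)}(\partial\Omega)\le C\|\partial_\nu u_j\|_{(0)}(\partial\Omega)\le C e^{R/\sqrt2}|\zeta(j)|=C e^{R/\sqrt2}\sqrt{R^2+\omega^2}.
\]
In short: do not pass through $\Delta u_j$ or $H^2$; use the $L^\infty$ control on $\nabla u_j$ that comes for free from $\psi_j\in H^{2s}$, $s>3/2$.
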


\begin{proof}
   Recall that $s>3/2$. Thus, by Sobolev embedding theorems
   $$
  \|\psi_j\|_{\infty}(\Omega)\leq
  C\|\psi_j\|_{(2s)}(\Omega)
             \leq \frac{CC_1}{|\zeta(j)|}
 \|i\omega\sigma_j+c_j\|_{(2s)}(\Omega)\leq
 \frac{CC_1}{C_0}\leq C
   $$
and
\[
|u_j(x)|=|e^{i\zeta(j)\cdot x}(1+\psi_j(x))|\le e^{|\Im \zeta(j)|}(1+|\psi_j(x)|)\leq C e^{\frac{R}{\sqrt{2}}}.
\]
 Moreover,
\[
|\nabla u_j(x)|\leq |i\zeta(j) e^{i\zeta(j)\cdot x} (1+\psi_j)|+|  e^{i\zeta(j)\cdot x}\nabla\psi_j |\leq Ce^{\frac{R}{\sqrt{2}}}|\zeta(j)|(1+|\nabla \psi(x)|),
\]
hence
$$
\|u_j\|_{(1)}(\Omega)\leq C e^{\frac{R}{\sqrt{2}}}
(1+|\zeta(j)|)(1+\|\psi_j\|_{(1)}(\Omega)
\leq C e^{\frac{R}{\sqrt{2}}}\sqrt{ R^2+\omega^2}
$$
and thus by Trace Theorems
$$
  \|(u_j,\partial_\nu u_j) \|_{(1/2,-1/2)}(\partial\Omega)
  \leq Ce^{\frac{R}{\sqrt{2}}}|\zeta(j)|=
  Ce^{\frac{R}{\sqrt{2}}}\sqrt{ R^2+\omega^2}.
$$
\end{proof}

We denote $\hat{f}$ the Fourier transform of the function $f$. Let $\tilde q = q_1-q_2$ . Lemma~\ref{3.12} is used to bound the low frequency of $\hat{\tilde q}$ as the following lemma shows.

\begin{lemma}
\label{lemma3.4}
Let $s>3/2$ and $\xi=r e$ with $ r\geq 0, |e|=1$. Assume that \eqref{dd3} is satisfied. There exists a constant $C$, such that
\begin{align}\label{qq}
   |\hat{\tilde q}(r e)|
   &\leq C\phi(R,\omega)  Q(\xi)   +Ce^{\sqrt{2}R}(R^2+\omega^2)\epsilon,
\end{align}
provided $\omega^2+R^2/2>r^2/4$.
Here
\begin{align*}
    Q(\xi)^2=\int \langle x\rangle^{-4s}  \int \langle -\eta+\xi-x\rangle ^{-2 s} |\hat{\tilde q}(\eta)|^2 d\eta dx
\end{align*}
and
$$
   \phi(R,\omega)= \frac{m\omega+M}{R+\omega}.
$$
\end{lemma}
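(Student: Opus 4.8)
The plan is to estimate the Fourier transform $\hat{\tilde q}(\xi)$ at the point $\xi = re$ by feeding the two CGO solutions $u_1, u_2$ from \eqref{3.9} into the orthogonality identity \eqref{alex}. First I would compute the product $u_1 u_2 = e^{i(\zeta(1)+\zeta(2))\cdot x}(1+\psi_1)(1+\psi_2) = e^{-i\xi\cdot x}(1 + \psi_1 + \psi_2 + \psi_1\psi_2)$, using the crucial identity $\zeta(1)+\zeta(2) = -\xi$. Splitting off the leading term gives
\begin{align*}
\int_\Omega \tilde q\, u_1 u_2 = \int_\Omega \tilde q\, e^{-i\xi\cdot x}\,dx + \int_\Omega \tilde q\, e^{-i\xi\cdot x}(\psi_1+\psi_2+\psi_1\psi_2)\,dx,
\end{align*}
so that $\hat{\tilde q}(\xi)$ (up to normalization constants of the Fourier transform, which I would absorb into $C$) equals the left-hand side minus the remainder integral. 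The left-hand side is controlled by \eqref{alex} together with the Cauchy-data bound from Lemma~\ref{3.12}, which yields a term bounded by $C e^{\sqrt{2}R}(R^2+\omega^2)\epsilon$ after multiplying the two bounds $Ce^{R/\sqrt 2}\sqrt{R^2+\omega^2}$.

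The remaining task is to bound the error integral $\int_\Omega \tilde q\, e^{-i\xi\cdot x}(\psi_1+\psi_2+\psi_1\psi_2)\,dx$ by $C\phi(R,\omega) Q(\xi)$. Here I would write, say, the $\psi_1$ contribution as $\int \tilde q(x) e^{-i\xi\cdot x}\psi_1(x)\,dx$ and pass to the Fourier side: this is a convolution, $\int \hat{\tilde q}(\eta)\,\widehat{e^{-i\xi\cdot\,}\psi_1}(-\eta)\,d\eta$ type expression, i.e. $\int \hat{\tilde q}(\eta)\hat\psi_1(\xi-\eta)\,d\eta$ up to signs. To get the weighted $Q(\xi)$ structure I would instead keep things in physical space for $\psi_1$ but use that $\psi_1$ solves \eqref{3:psi}, namely $(-\Delta - 2i\zeta(1)\cdot\nabla)\psi_1 = -q_1(1+\psi_1)$, so that $\psi_1 = -(-\Delta-2i\zeta(1)\cdot\nabla)^{-1}(q_1(1+\psi_1))$ and the operator norm of this solution operator on $H^{2s}$ decays like $|\zeta(1)|^{-1} \sim (R+\omega)^{-1}$; combined with $\|q_1(1+\psi_1)\|_{(2s)} \le C(m\omega+M)$ one gets $\|\psi_j\|_{(2s)} \le C\phi(R,\omega)$. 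Then I would estimate the $\psi_j$ integral by a duality/Cauchy-Schwarz pairing in the $H^{2s}$–$H^{-2s}$ spaces: writing $\int \tilde q\, e^{-i\xi\cdot x}\psi_j\,dx$ as $\langle \tilde q, e^{-i\xi\cdot x}\psi_j\rangle$ and noting $\|\tilde q\|_{(-2s)}$-type norm localized at the shifted frequency against $\|e^{-i\xi\cdot}\psi_j\|_{(2s)}$ — after unwinding the Fourier transforms of the weight $\langle x\rangle^{-2s}$ coming from Sobolev multipliers and the frequency shift $\eta \mapsto -\eta+\xi$, this is exactly the quantity $Q(\xi)$. The factor $\phi(R,\omega)$ comes out of the bound on $\|\psi_j\|_{(2s)}$.

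Concretely, for the double term $\psi_1\psi_2$ one uses $\|\psi_1\psi_2\|_{(2s)} \le C\|\psi_1\|_{(2s)}\|\psi_2\|_{(2s)} \le C\phi(R,\omega)^2 \le C\phi(R,\omega)$ (since $\phi \le 1$ under \eqref{dd3}), so it is absorbed into the same term. I would keep track of the constraint $\omega^2 + R^2/2 > r^2/4$ throughout, which is exactly what guarantees the real vectors $e(2)$-component in \eqref{38} is well-defined, hence that the CGO solutions with the prescribed $\zeta(j)$ exist; and I would invoke \eqref{dd3} to ensure $|\zeta(j)| > C_0\|q_j\|_{(2s)}$ so that Proposition~\ref{cgosch} applies.

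The main obstacle I expect is making the error term come out with precisely the weighted structure $Q(\xi)$ rather than a cruder bound like $\|\tilde q\|_{(-2s)}\|\psi_j\|_{(2s)}$. The point of carrying the weights $\langle x\rangle^{-4s}$ and the frequency-shifted weight $\langle -\eta+\xi-x\rangle^{-2s}$ is that when \eqref{qq} is later integrated over $\xi$ to reconstruct $\|\tilde q\|_{(-s)}$, one needs the convolution-type bilinear estimate to close — a naive bound would lose this. So the careful bookkeeping is: express $e^{-i\xi\cdot x}\psi_j(x)$ via its Fourier transform, which is $\hat\psi_j(\,\cdot\, + \xi)$ roughly, then the pairing with $\hat{\tilde q}$ plus the Sobolev weights produces the double integral defining $Q(\xi)^2$ after one Cauchy–Schwarz. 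Everything else — the product formula for $u_1u_2$, the $\epsilon$ term via \eqref{alex} and Lemma~\ref{3.12}, the $\psi_j$ bound via Proposition~\ref{cgosch} — is routine once this bilinear estimate is set up correctly.
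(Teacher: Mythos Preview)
Your overall architecture is exactly that of the paper: plug the CGO solutions \eqref{3.9} into \eqref{alex}, use $\zeta(1)+\zeta(2)=-\xi$ to isolate $\hat{\tilde q}(\xi)$, bound the boundary term via Lemma~\ref{3.12} to get $Ce^{\sqrt{2}R}(R^2+\omega^2)\epsilon$, and bound the remainder $\int \tilde q\, e^{-i\xi\cdot x}\Psi$ with $\Psi=\psi_1+\psi_2+\psi_1\psi_2$ using $\|\Psi\|_{(2s)}\le C\phi(R,\omega)$. All of that is correct.

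The gap is in the step you yourself flag as the main obstacle: producing the precise weighted quantity $Q(\xi)$ rather than a crude $\|\tilde q\|_{(-2s)}\|\Psi\|_{(2s)}$. Your sketch proposes to pass directly to Fourier, getting essentially $\int \hat{\tilde q}(\eta)\hat\Psi(\xi-\eta)\,d\eta$, and then apply one Cauchy--Schwarz with Sobolev weights. But a single Cauchy--Schwarz here cannot manufacture the \emph{double} integral in the definition of $Q(\xi)$: there is no second integration variable $x$ present. Moreover, putting the weight $\langle\eta\rangle^{2s}$ on $\hat\Psi(\xi-\eta)$ does not return $\|\Psi\|_{(2s)}$ because of the frequency shift by $\xi$; that norm can blow up with $|\xi|$. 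Your appeal to ``Sobolev multipliers'' does not fix this.

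What the paper does, and what is missing from your outline, is to insert a cutoff $\chi\in C_0^\infty(\Omega)$ with $\chi=1$ on $\supp\tilde q$ (harmless since $\tilde q$ is compactly supported), and then split the product $\chi\Psi$ on the Fourier side as a convolution: $\widehat{e^{-i\xi\cdot x}\chi\Psi}=(2\pi)^{-3}\widehat{\chi_\xi}*\hat\Psi$ with $\chi_\xi=e^{-i\xi\cdot x}\chi$. This introduces the extra variable (call it $x$) over which one first applies Cauchy--Schwarz with weight $\langle x\rangle^{\pm 2s}$, extracting $\|\Psi\|_{(2s)}$ cleanly without any shift. The inner $\eta$-integral then carries $\widehat{\chi_\xi}(-\eta-x)=\hat\chi(-\eta+\xi-x)$, and a second Cauchy--Schwarz with weight $\langle-\eta+\xi-x\rangle^{\pm s}$ pulls out the finite constant $\|\chi\|_{(s)}$ and leaves exactly the kernel $\langle-\eta+\xi-x\rangle^{-2s}|\hat{\tilde q}(\eta)|^2$ inside $Q(\xi)^2$. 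So the cutoff $\chi$ is not cosmetic: it is the device that creates the second integration variable and absorbs the frequency shift into a fixed smooth function. Once you add this step, your argument goes through.
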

\begin{proof}
Substituting CGO solutions (\ref{3.9}) into (\ref{alex}) yields
\begin{align*}
\left|\int_\Omega \tilde q(x)e^{-i\xi \cdot x}(1+\psi_1(x))(1+\psi_2(x)) dx  \right|\leq \notag\\
          \left\| \left(u_1, \partial_\nu u_1\right)\right\|_{(1/2,-1/2)}(\partial\Omega)\left\| \left(u_2, \partial_\nu u_2\right)\right\|_{(1/2,-1/2)}(\partial\Omega)\epsilon,
\end{align*}
which leads to
\begin{align}
\label{four11}
  &\left|\int_\Omega \tilde q e^{-i\xi \cdot x} dx  \right|   \notag\\
  &\leq  \left|\int  \tilde q(x) e^{-i\xi \cdot x}\chi(x)\Psi(x) dx  \right|+
   \left\| \left(u_1, \partial_\nu u_1\right)\right\|_{(1/2,-1/2)}(\partial\Omega)\left\| \left(u_2, \partial_\nu u_2\right)\right\|_{(1/2,-1/2)}(\partial\Omega)\epsilon,
\end{align}
where $\Psi(x)= \psi_1(x) + \psi_2(x)+\psi_1(x) \psi_2(x) $, $\chi\in C^\infty_0(\Omega)$, and $\chi=1$ in $\supp(\tilde q)$.

Since $\int  f\overline gdx=(2\pi)^{-3}\int \hat{f}\overline{\hat{g}}d\eta$ and $\widehat{fg}=(2\pi)^{-3}\hat{f}*\hat{g}$, one has
\begin{align*}
    & \int\tilde q(x) e^{-i\xi \cdot x}\chi(x)\Psi(x) dx   \\
    &=(2\pi)^{-3}\int \widehat{\tilde q}(\eta) (\widehat{e^{-i\xi \cdot x}\chi  \Psi})(-\eta)   d\eta=(2\pi)^{-6}\int \widehat{\tilde q}(\eta) (\widehat{e^{-i\xi \cdot x}\chi }*\widehat{\Psi})(-\eta)   d\eta.
\end{align*}
Denote $\chi_\xi=e^{-i\xi \cdot x}\chi $ and $\langle \xi \rangle=(1+|\xi|^2)^{1/2}$. By the H\"older's inequality, we obtain
\begin{align}\label{pre:four}
&|\int \hat{\tilde q}(\eta) (\widehat{e^{-i\xi \cdot x}\chi }*\widehat{\Psi})(-\eta)   d\eta| \notag\\
&\leq\int |\hat{\tilde q}(\eta) ||(\widehat{\chi_\xi}* \hat{\Psi}) (-\eta)|d\eta \notag\\
&\leq \int |\hat{\tilde q}(\eta)| |\int \widehat{\chi_\xi}(-\eta-x) \hat{\Psi}(x)dx|d\eta \notag\\
&\leq \int\int |\hat{\tilde q}(\eta)| | \widehat{\chi_\xi}(-\eta-x)|d\eta     | \hat{\Psi}(x)| dx  \notag\\
&\leq \left(\int \langle x\rangle^{-4s}  \left(\int |\hat{\tilde q}(\eta)| | \widehat{\chi_\xi}(-\eta-x)|     d\eta\right)^2        dx\right)^{1/2} \|\Psi\|_{(2s)}  \notag\\
&\leq \left(\int \langle x\rangle^{-4s}  \left(\int \langle -\eta+\xi-x\rangle ^{- s} |\hat{\tilde q}(\eta)|\langle -\eta+\xi-x\rangle ^{  s}  |\hat{\chi}(-\eta+\xi-x)|    d\eta\right)^2        dx\right)^{1/2} \|\Psi\|_{(2s)}\notag\\
&\leq \left(\int \langle x\rangle^{-4s}  \left(\int \langle -\eta+\xi-x\rangle ^{- 2s} |\hat{\tilde q}(\eta)|^2    d\eta\right)        dx\right)^{1/2}  \|\chi\|_{(s)}\|\Psi\|_{(2s)}.
\end{align}

From \eqref{3.9} and a priori assumptions, we have
\begin{equation}\label{339}
 \|\Psi\|_{(2s)}(\Omega) \leq C\frac{m\omega+M}{R+\omega}.
\end{equation}
Therefore, the estimate (\ref{qq}) follows from (\ref{uH1}), (\ref{four11}), (\ref{pre:four}) and (\ref{339}).
\end{proof}

The following lemma is an immediate consequence of Lemma \ref{lemma3.4}.
\begin{lemma}\label{lemma3.5}
 Let
\begin{equation}\label{ap2}
R^*>\sqrt{2}C_0M\quad\text{and}\quad \sqrt{2}C_0 m\le 1,
\end{equation}
where $C_0$ is the constant given in Proposition \ref{cgosch}.

If $0\leq r\leq \omega+\mathcal{R}^*$, then
\begin{align}\label{ff1}
|\hat{\tilde q}(r e)| &\leq  C \phi(R^*,\omega)  Q(\xi)  +C(\omega^2+R^{*2}) e^{\sqrt{2}R^*} \epsilon ;
\end{align}
if $r\geq \omega+R^*$, then
\begin{align}\label{ff2}
|\hat{\tilde q}(r e)|&\leq  C\phi(r,\omega) Q (\xi)    +C(\omega^2+r^2) e^{\sqrt{2}r} \epsilon.
\end{align}

\end{lemma}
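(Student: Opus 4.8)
The plan is to derive Lemma~\ref{lemma3.5} as a direct case analysis from Lemma~\ref{lemma3.4}, choosing in each regime of $r$ an admissible radius $R$ for the CGO construction so that the hypothesis $\omega^2+R^2/2>r^2/4$ of Lemma~\ref{lemma3.4} is met while keeping the exponential factor $e^{\sqrt{2}R}$ as small as possible. The whole point is that $\phi(R,\omega)=(m\omega+M)/(R+\omega)$ is decreasing in $R$, whereas $e^{\sqrt2 R}(R^2+\omega^2)$ is increasing in $R$, so one wants to take $R$ exactly as small as the frequency-localization constraint permits.

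First, for the low-frequency range $0\le r\le\omega+R^*$, I would simply set $R=R^*$. The standing assumption \eqref{ap2} is precisely \eqref{dd3} (with $R$ replaced by $R^*$), so \eqref{dd2}, and hence the admissibility condition $|\zeta(j)|>C_0\|q_j\|_{(2s)}(\Omega)$ needed for Proposition~\ref{cgosch}, holds. It remains to check the frequency-localization hypothesis $\omega^2+R^{*2}/2>r^2/4$: since $r\le\omega+R^*$ we have $r^2/4\le(\omega+R^*)^2/4\le(\omega^2+R^{*2})/2<\omega^2+R^{*2}/2$ (using $2ab\le a^2+b^2$), so the hypothesis is satisfied, perhaps after enlarging the generic constant slightly or noting the inequality is strict. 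Plugging $R=R^*$ into \eqref{qq} gives exactly \eqref{ff1}, with the constant $C$ now also absorbing $e^{\sqrt2 R^*}$-dependence if one wishes, though here it is kept explicit.

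Second, for the high-frequency range $r\ge\omega+R^*$, I would take $R=r$. Then $R=r\ge\omega+R^*>R^*>\sqrt2 C_0 M$, so again \eqref{dd3} (equivalently \eqref{dd2}) holds and Proposition~\ref{cgosch} applies with this $\zeta(j)$; note $|\zeta(j)|^2=r^2+\omega^2$. The localization hypothesis is immediate: $\omega^2+R^2/2=\omega^2+r^2/2>r^2/4$. Feeding $R=r$ into \eqref{qq} turns $\phi(R,\omega)$ into $\phi(r,\omega)$, the exponential factor into $e^{\sqrt2 r}$, and $R^2+\omega^2$ into $r^2+\omega^2$, which is exactly \eqref{ff2}. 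In both cases the function $Q(\xi)=Q(re)$ is the same quantity appearing in Lemma~\ref{lemma3.4}, so no further manipulation of it is needed here.

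There is essentially no hard obstacle in this lemma — it is a bookkeeping step isolating the two scalings of $R$ that will later be integrated over $\xi$. The only point requiring a moment's care is verifying the strict inequality $\omega^2+R^{*2}/2>r^2/4$ at the top of the low-frequency band $r=\omega+R^*$; one handles this by the elementary estimate $(\omega+R^*)^2\le 2(\omega^2+R^{*2})$, which gives $r^2/4\le(\omega^2+R^{*2})/2$, and strictness then follows since $\omega^2+R^{*2}/2>(\omega^2+R^{*2})/2$ unless $\omega=0$, which is excluded by $\omega>1$. (If one prefers to avoid even this, replacing $\omega+R^*$ by a slightly smaller cutoff, or $R^*$ by a slightly larger one, costs nothing in the final estimate.) Everything else is a substitution of the chosen $R$ into the already-proved bound \eqref{qq}.
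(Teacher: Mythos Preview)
Your proposal is correct and follows exactly the paper's approach: the paper's proof is the one line ``take $R=R^*$ when $0\leq r\leq \omega+R^*$ and take $R=r$ when $r\geq \omega+R^*$ in Lemma~\ref{lemma3.4},'' and you have simply filled in the routine verifications of the hypotheses in each case.
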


\begin{proof}
It is enough to take $R=R^*$ when $0\leq r\leq \omega+R^*$ and take $R=r$ when $r\geq \omega+R^*$ in Lemma \ref{lemma3.4}.
\end{proof}

With the help of Lemma~\ref{lemma3.5} we can prove

\begin{lemma}
Let the conditions of Lemma~\ref{lemma3.5} be satisfied. Then for any $T\ge\omega+R^*$, we have
 $$
       \|\tilde q\|^2_{(-s)}(\Omega)
   \leq C\phi(R^*,\omega)^2\|\tilde q\|^2_{(-s)}(\Omega)
   $$
        \begin{equation}\label{pre:q}
   + C(\omega^2+T^2)^2( e^{2\sqrt{2}R^*}+ \chi(T) e^{2\sqrt{2}T} )\epsilon^2
       +C(m\omega+M)^2 T^{-(2s-3)},
 \end{equation}
where $\chi(T)\leq 1$ and $\chi(T)=0$ if $T=\omega+R^*$.
\end{lemma}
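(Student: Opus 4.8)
The plan is to reconstruct the $H^{-s}$-norm of $\tilde q$ from its Fourier transform via the formula
$$
\|\tilde q\|_{(-s)}^2(\Omega) = (2\pi)^{-3}\int_{\R^3}\langle\xi\rangle^{-2s}|\hat{\tilde q}(\xi)|^2\,d\xi,
$$
split the integral into the low-frequency region $\{|\xi|\le T\}$ and the high-frequency region $\{|\xi|\ge T\}$, and estimate each piece. For the high-frequency tail I would use the standard Sobolev-interpolation trick: since $\tilde q = i\omega\tilde\sigma + \tilde c$ is bounded in $H^{2s}(\Omega)$ by $C(m\omega+M)$ (using $\|\sigma_j\|_{(2s)}\le m$ and $\|c_j\|_{(2s)}\le M$), we get
$$
\int_{|\xi|\ge T}\langle\xi\rangle^{-2s}|\hat{\tilde q}(\xi)|^2\,d\xi \le \langle T\rangle^{-(4s - 2s)} \cdot(\text{something})
$$
— more precisely, writing $\langle\xi\rangle^{-2s} = \langle\xi\rangle^{-(2s+2s-3)}\langle\xi\rangle^{-(3-2s)}$ one bounds the tail by $CT^{-(2s-3)}\|\tilde q\|_{(2s)}^2(\Omega)\le C(m\omega+M)^2 T^{-(2s-3)}$, which is exactly the last term in \eqref{pre:q}. (The passage from the whole-space Fourier integral to the $\Omega$-norm is legitimate because $\supp\tilde q\subset\Omega$ and one works with a fixed Sobolev extension; here the polynomial power $2s-3>0$ comes from $s>3/2$.)

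For the low-frequency part $\{|\xi|\le T\}$, I would pass to polar coordinates $\xi = re$, $0\le r\le T$, and apply Lemma~\ref{lemma3.5}: for $0\le r\le\omega+R^*$ use \eqref{ff1}, and for $\omega+R^*\le r\le T$ use \eqref{ff2}. In both cases $|\hat{\tilde q}(re)|^2$ is controlled by $C\phi(\cdot,\omega)^2 Q(\xi)^2$ plus an $\epsilon^2$-term with an exponentially growing prefactor. The cutoff function $\chi(T)$ in \eqref{pre:q} is precisely a bookkeeping device: when $T=\omega+R^*$ the region $\{\omega+R^*\le r\le T\}$ is empty so only \eqref{ff1} (with its milder $e^{2\sqrt2 R^*}$ prefactor) is used, whereas for $T>\omega+R^*$ one also picks up the $e^{2\sqrt2 T}$ prefactor from \eqref{ff2}; bounding $\phi(r,\omega)\le\phi(R^*,\omega)$ for $r\ge R^*$ (since $r\mapsto (m\omega+r\cdot 0+\dots)$... actually $\phi(r,\omega)=(m\omega+M)/(r+\omega)$ is \emph{decreasing} in $r$, so $\phi(r,\omega)\le\phi(R^*,\omega)$ throughout the low-frequency ball once $r\ge R^*$, and for $r<R^*$ we already have $\phi(r,\omega)\le (m\omega+M)/\omega$ — one absorbs all of this into a single $\phi(R^*,\omega)^2$ factor, possibly after also noting $(m\omega+M)^2\le C(\omega^2+T^2)\cdot(\dots)$ if needed). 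Integrating the $\epsilon^2$-terms over $|\xi|\le T$ contributes a factor $\sim T^3$, which is absorbed into $(\omega^2+T^2)^2$.

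The remaining task is to handle the term $\int_{|\xi|\le T}\langle\xi\rangle^{-2s}\phi(R^*,\omega)^2 Q(\xi)^2\,d\xi$ and recognize it as $C\phi(R^*,\omega)^2\|\tilde q\|_{(-s)}^2(\Omega)$. This is the step I expect to be the main obstacle, and it hinges on unwinding the definition
$$
Q(\xi)^2=\int\langle x\rangle^{-4s}\int\langle-\eta+\xi-x\rangle^{-2s}|\hat{\tilde q}(\eta)|^2\,d\eta\,dx.
$$
One extends the $\xi$-integration from $\{|\xi|\le T\}$ to all of $\R^3$ (legitimate since the integrand is nonnegative), then applies Fubini to integrate first in $\xi$: after the substitution $\xi\mapsto \xi + x + \eta$ (so $-\eta+\xi-x\mapsto\xi$) and using $\langle\xi+x+\eta\rangle^{-2s}\le C\langle\xi\rangle^{-2s}... $ — more carefully, using the Peetre-type inequality $\langle a+b\rangle^{2s}\le C\langle a\rangle^{2s}\langle b\rangle^{2s}$ one bounds $\langle\xi\rangle^{-2s}$ (in the original variable, with $\xi = \eta - x + (\text{new }\xi)$) in terms of $\langle\eta\rangle^{-2s}\langle x\rangle^{2s}\langle\text{new}\rangle^{2s}$; the three Japanese-bracket weights $\langle x\rangle^{-4s}$, $\langle-\eta+\xi-x\rangle^{-2s}$ and $\langle\xi\rangle^{-2s}$ are arranged precisely so that after this change of variables the $x$- and convolution-variable integrals converge (each giving a finite constant, using $4s-2s = 2s > 3$ and $2s>3$) and the leftover weight on $\eta$ is exactly $\langle\eta\rangle^{-2s}$, producing $\int\langle\eta\rangle^{-2s}|\hat{\tilde q}(\eta)|^2\,d\eta = C\|\tilde q\|_{(-s)}^2(\Omega)$. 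Finally, choosing $m$ small and $R^*$ appropriately (as in \eqref{ap2}), $\phi(R^*,\omega)^2\le C(m\omega+M)^2/(R^*+\omega)^2$ is made $\le 1/2$ — wait, it is not small, but it appears on both sides; in the final assembly of Theorem~\ref{mainatt} this term is absorbed by choosing $m$ small enough that $C\phi(R^*,\omega)^2\le 1/2$, so here we simply record the inequality \eqref{pre:q} as stated without yet performing that absorption. Collecting the three contributions — the $\phi^2\|\tilde q\|_{(-s)}^2$ term from the low-frequency $Q$-part, the $(\omega^2+T^2)^2(e^{2\sqrt2 R^*}+\chi(T)e^{2\sqrt2 T})\epsilon^2$ term from the low-frequency $\epsilon$-part, and the $(m\omega+M)^2 T^{-(2s-3)}$ term from the high-frequency tail — yields \eqref{pre:q}.
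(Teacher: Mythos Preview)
Your overall architecture is correct and matches the paper's: write $\|\tilde q\|_{(-s)}^2$ in polar coordinates, split into $I_1$ over $[0,\omega+R^*]$, $I_2$ over $[\omega+R^*,T]$, $I_3$ over $[T,\infty)$; bound $I_3$ by a tail estimate; bound $I_1,I_2$ via Lemma~\ref{lemma3.5} together with the monotonicity $\phi(r,\omega)\le\phi(R^*,\omega)$ for $r\ge R^*$, noting $I_2=0$ when $T=\omega+R^*$. (Two minor remarks: the paper gets $I_3$ more cheaply from $|\hat{\tilde q}(\xi)|\le C\|\tilde q\|_{(0)}(\Omega)$ via compact support, but your route works; and $\int_0^T(1+r^2)^{-s}r^2\,dr$ is bounded by a constant since $2s>3$, not by $T^3$---this only strengthens your bound, so it is harmless.)

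The genuine gap is in your argument for
\[
\int\langle\xi\rangle^{-2s}Q(\xi)^2\,d\xi\le C\|\tilde q\|_{(-s)}^2.
\]
After your substitution $\xi\mapsto\xi'+x+\eta$, applying Peetre as $\langle\xi'+x+\eta\rangle^{-2s}\le C\langle\eta\rangle^{-2s}\langle x\rangle^{2s}\langle\xi'\rangle^{2s}$ and combining with the existing weights $\langle x\rangle^{-4s}\langle\xi'\rangle^{-2s}$ leaves the integrand $C\langle\eta\rangle^{-2s}\langle x\rangle^{-2s}\cdot 1$, and the $\xi'$-integral diverges. The three weights are \emph{not} arranged so that a single Peetre application closes the estimate; your accounting ``$4s-2s=2s>3$ and $2s>3$'' is off by exactly the $+2s$ that Peetre costs on the $\xi'$ variable. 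The paper handles this step by splitting the inner $\eta$-integral over $A=\{\eta:|-\eta+\xi-x|\le|\eta|/2\}$ and $A^c$: on $A^c$ one has $\langle-\eta+\xi-x\rangle^{-2s}\le C\langle\eta\rangle^{-2s}$ directly, while on $A$ one uses $A\subset\{\tfrac23|\xi-x|\le|\eta|\le 2|\xi-x|\}$ together with $\langle x\rangle^{-2s}\langle\xi\rangle^{-2s}\le 2^s\langle x-\xi\rangle^{-2s}\le C\langle\eta\rangle^{-2s}$, after which the remaining $\xi$- and $x$-integrals are finite. Equivalently, what is needed is the pointwise convolution bound $(\langle\cdot\rangle^{-2s}*\langle\cdot\rangle^{-2s})(y)\le C\langle y\rangle^{-2s}$ for $2s>3$, whose proof is precisely such a region-splitting and does not follow from Peetre alone. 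With this substitution your proof goes through and coincides with the paper's.
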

\begin{proof}
Using polar coordinates, we obtain
\begin{align}\label{F0}
    \|\tilde q\|^2_{(-s)}(\Omega)&=
    \int^\infty_0\int_{|e|=1}  |\hat{\tilde q}(re)|^2(1+r^2)^{-s} r^2 d e dr\notag\\
     &=\int^{\omega+R^*}_0\int_{|e|=1}  |\hat{\tilde q}(re)|^2(1+r^2)^{-s} r^2 d e dr\notag\\
     &\quad+\int_{\omega+R^*}^T\int_{|e|=1}  |\hat{\tilde q}(re)|^2(1+r^2)^{-s} r^2 d e dr\notag\\
     &\quad+\int_T^\infty\int_{|e|=1}  |\hat{\tilde q}(r e)|^2(1+r^2)^{-s} r^2 d e dr\notag\\
     &=:I_1+I_2+I_3.
\end{align}

We begin with bounding $I_3$. Since $\supp \tilde q \subset \Omega$ by the H\"older's inequality
$|\hat{\tilde q}(\xi)|\leq C\|\tilde q\|_{(0)}(\Omega)$ and so
\begin{align}\label{F1}
I_3& \leq
C \|\tilde q\|^2_{(0)}(\Omega)\int_T^\infty\int_{|e|=1}   (1+r^2)^{-s}  r^2 de dr\notag\\
     &\leq C \|\tilde q\|^2_{(0)}(\Omega)T^{-(2s-3)}
     \leq C(\omega  m+M)^2 T^{-(2s-3)}.
\end{align}

Before evaluating $I_1$ and $I_2$ terms, we need the following estimate.
Let $A=\{\eta;\ |-\eta+\xi-x|\leq |\eta|/2\}.$ By direct computation, we have
\begin{align*}
&  \int \langle \xi\rangle^{-2s}   \int \langle x\rangle^{-4s}  \left(\int_{A^c} \langle -\eta+\xi-x\rangle ^{-2 s} |\hat{\tilde q}(\eta)| ^2 d\eta \right)        dx  d\xi \\
& \leq  C  \int \langle \xi\rangle^{-2s}   \int \langle x\rangle^{-4s}  \left(\int_{A^c} \langle \eta \rangle ^{-2 s} |\hat{\tilde q}(\eta)| ^2 d\eta \right)        dx  d\xi\\\
&\leq C \|\tilde q\|^2_{(-s)},
\end{align*}
with $C$ depends on $s$.
On the other hand, by using the fact that $A\subset \{\eta;\  \frac{2}{3}|\xi-x|\leq |\eta|\leq 2|\xi-x|\}$ and $\langle x\rangle^{-2s}\langle \xi\rangle^{-2s} \leq 2^s\langle x-\xi\rangle^{-2s}$, one has
\begin{align*}
&  \int \langle \xi\rangle^{-2s}   \int \langle x\rangle^{-4s}  \left(\int_{A } \langle -\eta+\xi-x\rangle ^{-2 s} |\hat{\tilde q}(\eta)| ^2 d\eta \right)        dx  d\xi  \\
&\leq   \int \langle \xi\rangle^{-2s}   \int \langle x\rangle^{-4s}   \int_{\frac{2}{3}|\xi-x|\leq |\eta|\leq 2|\xi-x|} \langle -\eta+\xi-x\rangle ^{-2 s} |\hat{\tilde q}(\eta)| ^2 d\eta        dx  d\xi \\
&\leq  C \int \langle x\rangle^{- 2s}   \int \int_{\frac{2}{3}|\xi-x|\leq |\eta|\leq 2|\xi-x|} \langle x-\xi\rangle^{- 2s} \langle -\eta+\xi-x\rangle ^{-2 s} |\hat{\tilde q}(\eta)| ^2 d\eta        d\xi dx     \\
& \leq C  \int \langle x\rangle^{- 2s}   \int \int_{\frac{2}{3}|\xi-x|\leq |\eta|\leq 2|\xi-x|} \langle \eta \rangle^{- 2s} \langle -\eta+\xi-x\rangle ^{-2 s} |\hat{\tilde q}(\eta)| ^2 d\eta        d\xi dx    \\
& \leq C  \int \left(   \int \int  \langle x\rangle^{- 2s}  \langle -\eta+\xi-x\rangle ^{-2 s} d\xi dx \right) |\hat{\tilde q}(\eta)| ^2 \langle \eta \rangle^{- 2s} d\eta  \\
&\leq C\|\tilde q\|^2_{(-s)}.
\end{align*}

Therefore, we can deduce that
\begin{align}\label{newQ}
    \int \langle\xi\rangle^{-2s} Q(\xi)^2 d\xi\leq C\|\tilde q\|_{(-s)}^2.
\end{align}

Next, by using (\ref{ff1}) and (\ref{newQ}), we yield
\begin{align}\label{F2}
I_1&\leq \int^{\omega+R^*}_0 C \big(\phi(R^*,\omega)
  Q (\xi)  \notag+(\omega^2+R^{*2}) e^{\sqrt{2}R^*}\epsilon\big)^2 (1+r^2)^{-s} r^2  dr\notag\\
&\leq C(\phi(R^*,\omega)^2\|\tilde q\|^2_{(-s)}(\Omega) +(\omega^2+R^{*2})^2 e^{\sqrt{2}R^*} \epsilon^2)\int^{\omega+R^*}_0 (1+r^2)^{-s} r^2  dr \notag\\
&\leq C\phi(R^*,\omega)^2\|\tilde q\|^2_{(-s)}(\Omega)+C(\omega^2+R^{*2})^2 e^{\sqrt{2}R*} \epsilon^2,
\end{align}
since due to $2s>3$  we have
$$
\int_0^{\omega+R^*}  (1+r^2)^{-s} r^2  dr\leq
\int^\infty_0  (1+r^2)^{-s} r^2   dr=C.
$$

In the same way, the estimate (\ref{ff2}) implies that
\begin{align}\label{F3}
 I_2&\leq
 C\big( \phi(R^*,\omega)^2\|\tilde q\|^2_{(-s)}(\Omega)+(\omega^2+T^2)^2 e^{\sqrt{2}T}\epsilon^2\big) \int_{\omega+R^*}^T (1+r^2)^{-s} r^2  dr\notag\\
     &\leq  C\phi(R^*,\omega)^2\|\tilde q\|^2_{(-s)}(\Omega) +C(\omega^2+T^2)^2 e^{2\sqrt{2}T}\epsilon^2.
\end{align}

Combining (\ref{F0})-(\ref{F3}) and using that $I_2=0$ when
$T=\omega+ R^*$, we complete the proof.

\end{proof}

We are now ready to prove Theorem~\ref{mainatt}.

\medskip\noindent
\emph{Proof of Theorem \ref{mainatt}.} We will use the estimate \eqref{pre:q}. Obviously,
\[
\phi(R^*,\omega)=\frac{m\omega+M}{ R^*+\omega}\le m+\frac{M}{ R^\ast}.
\]
Therefore, we can choose $m<1/(\sqrt{2}C_0)$ and $ R^\ast>\sqrt{2}C_0 M$ so that
\[
C\phi(R^\ast,\omega)^2<\frac 12.
\]
The choice of $m$ depends only on $s,\Omega$ and of $ R^\ast$ only depends on $s,\Omega, M$. Thus, the first term on the right hand side of \eqref{pre:q} can be absorbed by the left side, and we obtain
\begin{align}
\label{lqq}
      \|\tilde q\|^2_{(-s)}(\Omega)&\leq
         C(\omega^2+T^2)^2(e^{2\sqrt{2}R^*}+
        \chi(T)e^{2\sqrt{2}T}) \epsilon^2
       +C\omega^2T^{-(2s-3)}.
 \end{align}

We consider the following two cases:
\begin{align*}
    \text{(i)}\ \omega+R^*\leq \frac{1}{2}\mathcal{E}\ \ \hbox{and}\ \ \text{(ii)}\ \omega+\mathcal{R}^*\geq \frac{1}{2}\mathcal{E}.
\end{align*}

In the  case (i) we choose
$T=\frac{\mathcal{E}}{2}.$
We want to show that there exists $C_2$ such that
\begin{equation}\label{res1}
(\omega^2+T^2)^2 e^{2\sqrt{2}T} \epsilon^2\leq C_2\omega^2(\omega+\mathcal{E})^{-(2s-3)}
\end{equation}
and
\begin{equation}\label{res2}
\omega^2T^{-(2s-3)}\leq C_2\omega^2(\omega+\mathcal{E})^{-(2s-3)}.
\end{equation}
It is clear that \eqref{mainineq} is a consequence of (\ref{lqq}), (\ref{res1}), and (\ref{res2}).

Due to our choice of $T$, (\ref{res1}) is satisfied if
$$
(\omega+\mathcal{E})^4\epsilon^{2-\sqrt{2}}
\leq C(\omega+\mathcal{E})^{-2s+3},
$$
or
$$
(\omega+\mathcal{E})^{4+2s-3}\epsilon^{2-\sqrt{2}}
\leq C,
$$
which follows from the elementary inequality
$(\mathcal{E})^{4+2s-3}\epsilon^{2-\sqrt{2}}
\leq C.$

Note that (\ref{res2}) is equivalent to
\begin{align} \label{res22}
     C_2^{-1/(2s-3)}(\omega+\mathcal{E}) \leq T= \frac{\mathcal{E}}{2} .
\end{align}
Obviously,
$$
    \omega+\mathcal{E} \leq \omega+R^*+\mathcal{E}\leq \frac{3\mathcal{E}}{2},
$$
according to (i). Now we see that (\ref{res22}) holds whenever
$ C_2^{-1/(2s-3)}\leq 1/3$.

Next we consider case (ii). We choose $T=\omega+R^*.$
Then from (\ref{lqq}) we have
\begin{equation*}
            \|\tilde q\|^2_{H(-s)}(\Omega)
            \leq C(\omega^4+R^{*4}) e^{2\sqrt{2}R^*}  \epsilon^2 +C\omega^2(\omega+R^*)^{-(2s-3)}.
\end{equation*}
Since $R^*<C$ and in the case (ii)
$\omega+R^*\geq \frac{\omega}{2} +\frac{1}{4}\mathcal{E},
$ the estimate (\ref{mainineq}) follows.

The proof now is complete.

\section{Stability estimates for Maxwell equations}

 In this section we will prove Theorem~\ref{main}. We first discuss a reduction that transforms the Maxwell equations to the vectorial Schr\"odinger equation by adapting Section~1 in \cite{COS} to the special case of $\varepsilon=\mu=1$. We then use  arguments similar to the previous section to derive a bound of the difference of the conductivities.

\subsection{Reduction to the Schr\"odinger equation}
Let $\Omega$ be a bounded domain in $\mathbb{R}^3$ with smooth boundary. We consider the time-harmonic Maxwell equations (\ref{max2}) with $\varepsilon=\mu=1$,
\begin{align}\label{r:max}
    \left\{
      \begin{array}{ll}
        \nabla \times  H+i\omega\gamma E=0, \\
        \nabla \times  E -i\omega  H=0,
      \end{array}
    \right.
\end{align}
where $\gamma=1+i\sigma/\omega$, $\sigma\in C^2(\overline\Omega)$ and $\sigma>0$ in $\overline\Omega$.
From (\ref{r:max}), one has the following compatibility conditions for $E$ and $H$
\begin{align}\label{2:max}
    \left\{
      \begin{array}{ll}
       \nabla \cdot(\gamma E)=0, \\
        \nabla\cdot H=0.
      \end{array}
    \right.
\end{align}
Let $\alpha=\log\gamma$ (the principal branch). Combining (\ref{r:max}) and (\ref{2:max}) gives the following eight equations
\begin{align*}
    \left\{
      \begin{array}{ll}
       \nabla\cdot E+\nabla\alpha\cdot E=0, \\
      \nabla \times  E -i\omega  H=0, \\
       \nabla \cdot H=0, \\
        \nabla \times  H+i\omega\gamma E=0,\\
      \end{array}
    \right.
\end{align*}
which leads to the $8\times 8$ system
$$
      \left[\left(
              \begin{array}{cccc}
                \ast & 0 & \ast & D\cdot \\
                \ast & 0 & \ast & -D\times \\
                \ast & D\cdot & \ast & 0 \\
                \ast & D\times & \ast & 0 \\
              \end{array}
            \right)+
     \left(
       \begin{array}{cccc}
         \ast & 0 & \ast & D\alpha\cdot \\
         \ast & \omega I_3 & \ast & 0 \\
         \ast & 0 & \ast & 0 \\
         \ast & 0 & \ast & \omega\gamma I_3 \\
       \end{array}
     \right)
\right]\left(
                     \begin{array}{c}
                       0 \\
                       H \\
                       0 \\
                       E \\
                     \end{array}
                   \right)=0.
$$
Here $I_j$ is the $(j\times j)$-identity matrix,
$$
    D\cdot =-i(\partial_1\ \partial_2\ \partial_3),\ \ D=-i(\partial_1\ \partial_2\ \partial_3)^t
$$
and
$$
    D\times=-i\left(
              \begin{array}{ccc}
                  0& -\partial_3 & \partial_2 \\
                \partial_3 &  0 & -\partial_1 \\
                -\partial_2 &\partial_1 &  0 \\
              \end{array}
            \right).
$$
$\ast$ means that we obtain the same equations regardless of values of $\ast$.

Define the operators
$$
  P_+(D)=\left(
           \begin{array}{cc}
             0 & D\cdot \\
             D & D\times\\
           \end{array}
         \right),\ \ P_-(D)=\left(
           \begin{array}{cc}
             0 & D\cdot \\
             D & -D\times \\
           \end{array}
         \right)
$$
acting on 4-vectors. It is easy to check that
$$
   P_+(D)P_-(D)=P_-(D)P_+(D)=-\Delta I_4
$$
and
$$
    P_+(D)^*=P_-(D),\ \ P_-(D)^*=P_+(D).
$$
Also, we can see that the $8\times 8$ operator
$$
    P=\left(
        \begin{array}{cc}
          0 & P_-(D) \\
          P_+(D) & 0 \\
        \end{array}
      \right)
$$
is a self-adjoint, i.e., $P^*=P$.
For $\zeta\in{\mathbb C}^3$, we set
$$
    P(\zeta)=-i\left(
        \begin{array}{cc}
          0 & P_-(\zeta) \\
          P_+(\zeta) & 0 \\
        \end{array}
      \right).
$$

Let $X=(\Phi_1,\ H^t, \Phi_2,\ E^t)^t$ be 8-vectors, where $\Phi_1$ and $\Phi_2$ are additional scalar fields. The augmented Maxwell's system is
$$
     (P+V)X=0\ \ \hbox{in $\Omega$},
$$
where
$$
      V=\left(
          \begin{array}{cccc}
            \omega & 0 & 0 & D\alpha\cdot \\
            0 & \omega I_3 & D\alpha & 0 \\
            0 & 0 & \omega\gamma & 0 \\
            0 & 0 & 0 & \omega\gamma I_3 \\
          \end{array}
        \right).
$$
Note that when $\Phi_j=0$ the solution $X$ corresponds to the solution of the original Maxwell's system in $\Omega$. Now we want to rescale the augmented system. Let
$$
     Y=\left(
         \begin{array}{cc}
           I_4 & 0 \\
           0 & \gamma^{1/2} I_4\\
         \end{array}
       \right)X,
$$
then
$$
      (P+V)X= \left(
         \begin{array}{cc}
           \gamma^{-1/2} I_4& 0 \\
           0 & I_4\\
         \end{array}
       \right)  (P+W)Y,
$$
where
\begin{align*}
W=\kappa I_8+\frac{1}{2}
\left(
\begin{tabular}{cc|cc}
  0 & 0 & 0 & $D\alpha\cdot$ \\
  0& 0&   $D\alpha $& $D\times \alpha$\\
  \hline
  0 & 0 &0 &0 \\
    0 & 0 &0 &0 \\
\end{tabular}
       \right)
\end{align*}
with $\kappa=\omega\gamma^{1/2}$. Hence
$$
(P+V)X=0\quad\text{if and only if}\quad   (P+W)Y=0.
$$

The advantage of rescaling is that no first order terms appear in (\ref{P1})-(\ref{P3}). The proof of next lemma can be found in \cite{COS}.
\begin{lemma}\label{lemma4.1}
\begin{align}
   (P+W)(P-W^t)&=-\Delta I_8+Q,\label{P1}\\
   (P-W^t)(P+W)&=-\Delta I_8+Q(1),\\
   (P+W^*)(P-\overline W)&=-\Delta I_8+Q(2)\label{P3},
\end{align}
where the matrix potentials are given by
\begin{align}\label{qqq}
    Q =\frac{1}{2}
\left(
\begin{tabular}{cc|cc}
  $\Delta\alpha$ &  &  & \\
  & $2\nabla^2\alpha-\Delta\alpha I_3$  &   &\\
  \hline
  & & & \\
     &  & & \\
\end{tabular}
       \right)
- \kappa^2 I_8
  -\left(
\begin{tabular}{c|c}
   $\frac{1}{4}(D\alpha\cdot D\alpha)I_4$  &
$
  \begin{array}{cc}
     & 2D\kappa \cdot \\
    2D\kappa &  \\
  \end{array}
$ \\
  \hline
$ \begin{array}{cc}
     & 2D\kappa\cdot \\
    2D\kappa &  \\
  \end{array}
$  &  \\
\end{tabular}
       \right),
\end{align}
$$
    Q(1)=-\frac{1}{2}
\left(
\begin{tabular}{cc|cc}
  & & & \\
     &  & & \\
  \hline
 &  & $\Delta\alpha$ & \\
  &  &   & $2\nabla^2\alpha-\Delta\alpha I_3$\\
\end{tabular}
       \right)
-\kappa^2 I_8
-
\left(
\begin{tabular}{c|c}
 &$
  \begin{array}{cc}
     & \\
     & 2D\kappa\times \\
  \end{array}
$
\\
  \hline
$ \begin{array}{cc}
     & \\
    &  -2D\kappa\times\\
  \end{array}
$  &  $\frac{1}{4}(D\alpha\cdot D\alpha)I_4$\\
\end{tabular}
       \right),
$$
and
\begin{align*}
    Q(2)=-\frac{1}{2}
\left(
\begin{tabular}{cc|cc}
  & & & \\
     &  & & \\
  \hline
 &  & $\Delta\overline\alpha$ & \\
  &  &   & $2\nabla^2\overline\alpha-\Delta\overline\alpha I_3$\\
\end{tabular}
       \right)
-\overline\kappa^2 I_8
 -\left(
\begin{tabular}{c|c}
 &$
  \begin{array}{cc}
     & \\
     & -2D\overline\kappa\times\\
  \end{array}
$
\\
  \hline
$ \begin{array}{cc}
     & \\
    &  2D\overline\kappa\times\\
  \end{array}
$  &  $\frac{1}{4}(D\overline \alpha\cdot D\overline\alpha)I_4$\\
\end{tabular}
       \right).
\end{align*}
Here $\nabla^2 f=(\partial^2_{ij}f)$ is the Hessian of $f$
and  only non zero elements are shown in  $8\times 8$ matrices $Q$, $Q(1)$, and $Q(2)$. Moreover, $W^t$ denotes the transpose of $W$ and $W^*=\overline{W}^t$.
\end{lemma}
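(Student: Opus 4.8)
The three identities are of the same type, so the plan is to prove the first one, \eqref{P1}, in detail and then to observe that the second is obtained from it upon replacing $W$ by $-W^t$ (which turns $(P+W)(P-W^t)$ into $(P-W^t)(P+W)$) and the third upon replacing $W$ by $W^*=\overline{W}^t$ (which turns it into $(P+W^*)(P-\overline W)$); in each case $Q(1)$, $Q(2)$ is the resulting zeroth-order remainder. For \eqref{P1} I would start from the elementary fact $P^2=-\Delta I_8$, which is immediate from $P_+(D)P_-(D)=P_-(D)P_+(D)=-\Delta I_4$ and the off-diagonal block form of $P$. Write $P=\sum_{j=1}^3A_j\partial_j$ with the constant symmetric matrices $A_j=-i\left(\begin{smallmatrix}0 & P_-(e_j)\\ P_+(e_j) & 0\end{smallmatrix}\right)$, and $W=\kappa I_8+\tfrac12 N$ with $N=\left(\begin{smallmatrix}0_4 & M\\ 0_4 & 0_4\end{smallmatrix}\right)$ and $M=P_+(D\alpha)$, where we abbreviate $D\alpha=-i\nabla\alpha$. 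By the Leibniz rule $P\circ W^t=\sum_jA_j(\partial_jW^t)+\sum_j(A_jW^t)\partial_j$, so that
\begin{align*}
(P+W)(P-W^t) &= -\Delta I_8+\sum_j\bigl(WA_j-A_jW^t\bigr)\partial_j\\
&\quad-\sum_jA_j(\partial_jW^t)-WW^t.
\end{align*}

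The crucial step is that the first-order part vanishes, i.e.\ $WA_j=A_jW^t$ for every $j$. Since $\kappa I_8$ commutes with $A_j$, this reduces to $NA_j=A_jN^t$, and the block structure reduces that in turn to the symbol identity $P_+(\zeta)P_+(\eta)=P_-(\eta)P_-(\zeta)$ for $\zeta,\eta\in\mathbb C^3$; because $P_+(\zeta)^t=P_-(\zeta)$, this says exactly that the $4\times4$ matrix $P_+(\zeta)P_+(\eta)$ is symmetric. I would verify this either from the explicit $4\times4$ forms of $P_\pm$, or structurally from $\zeta\eta^t-\eta\zeta^t=(\eta\times\zeta)\times$ together with the $\mathfrak{so}(3)$ bracket $[(\eta\times),(\zeta\times)]=(\eta\times\zeta)\times$. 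The same symmetry, with indices and transposes permuted, also gives the first-order cancellation in the other two identities.

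It remains to identify the zeroth-order remainder $Q=-\sum_jA_j(\partial_jW^t)-WW^t$ with the matrix displayed in \eqref{qqq}. Writing $W^t=\kappa I_8+\tfrac12 N^t$: the term $-WW^t$ produces $-\kappa^2I_8$, the block $-\tfrac14 NN^t=-\tfrac14\left(\begin{smallmatrix}MM^t & 0\\ 0 & 0\end{smallmatrix}\right)$ with $MM^t=(D\alpha\cdot D\alpha)I_4$ (from $P_+(\zeta)P_-(\zeta)=(\zeta\cdot\zeta)I_4$), and an off-diagonal part $-\tfrac\kappa2(N+N^t)$; the term $-\sum_j(\partial_j\kappa)A_j$ is a further off-diagonal part; and $-\tfrac12\sum_jA_j(\partial_jN^t)$, after using the symmetry of the Hessian $\nabla^2\alpha$ and the identity $P_-(\xi)^2=\operatorname{diag}(|\xi|^2,\,2\xi\xi^t-|\xi|^2I_3)$, yields $\tfrac12\left(\begin{smallmatrix}\operatorname{diag}(\Delta\alpha,\,2\nabla^2\alpha-\Delta\alpha I_3) & 0\\ 0 & 0\end{smallmatrix}\right)$. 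Finally the two off-diagonal pieces combine: from $\alpha=\log\gamma$, $\kappa=\omega\gamma^{1/2}$ one has $\nabla\alpha=\nabla\gamma/\gamma$ and $\nabla\kappa=\tfrac12\kappa\nabla\alpha$, hence $\kappa\,D\alpha=2D\kappa$; and since $P_+(\zeta)+P_-(\zeta)$ has vanishing cross-product block, one obtains precisely the cross-term-free $2D\kappa$ entries appearing in \eqref{qqq}. Summing the contributions gives $Q$, and the same computation gives $Q(1)$ and $Q(2)$.

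I expect the only genuinely delicate point to be this last bookkeeping: ensuring that the first-order terms cancel, and tracking how the various $P_\pm(D\kappa)$-type pieces (which individually carry cross-product terms) collapse into the cross-term-free blocks displayed in $Q$. Everything else is routine $8\times8$ block multiplication together with the two elementary relations $\nabla\alpha=\nabla\gamma/\gamma$ and $\nabla\kappa=\tfrac12\kappa\nabla\alpha$.
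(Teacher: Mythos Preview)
Your argument is correct; the first-order cancellation via the symmetry of $P_+(\zeta)P_+(\eta)$ is exactly the right mechanism, and your bookkeeping for the zeroth-order term checks out (in particular the Hessian symmetry kills the cross-product entries in $-\tfrac12\sum_jA_j(\partial_jN^t)$, and $\kappa D\alpha=2D\kappa$ collapses the off-diagonal pieces to the displayed $2D\kappa$ blocks). Note, however, that the paper does not actually prove this lemma: it simply refers to \cite{COS} for the proof, so there is nothing to compare beyond saying that your direct block computation is the natural verification and is essentially what one finds carried out in that reference.
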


\subsection{Construction of CGO solutions}
We recall that  $\overline\Omega\subset B$.
We extend $\sigma$ to a function in $H^{2s+2}(\mathbb{R}^3)$, still denoted by $\sigma$, so that $\supp\sigma$ in $ B$. Therefore, $\omega^2I_8+Q$, $\omega^2I_8+Q(1)$ and $\omega^2I_8+Q(2)$ are compactly supported in $B$.

Let $Y=(f^1,\ \ (u^1)^t, \ \ f^2,\ \ (u^2)^t)^t\in H^{2s}(B)$.
We recall the following construction of CGO solutions for Maxwell equations (\ref{r:max}) in \cite{Ca}.
\begin{proposition}{\rm\cite[Proposition~9]{Ca}}\label{cgo:max}
Let $s>3/2 $ be an integer. Assume that $a, b,\zeta\in\mathbb{C}^3$ and $\zeta\cdot\zeta=\omega^2$. Then there exists a constant $C_0$ depending on $s$, such that if
$$
    |\zeta|>C_0\left(\sum_{j=1,2} \|\omega^2+q_j\|_{(2s)}(B )+\|\omega^2 I_8+Q\|_{(2s)}(B )\right),
$$
where
$$
  q_1=-\kappa^2,\ \ q_2=-\frac{1}{2}\Delta\alpha-\kappa^2-\frac{1}{4}(D\alpha\cdot D\alpha),
$$
then there exists a solution
$$
   Z=e^{i\zeta\cdot x}(A+\Psi)
$$
of $(-\Delta I_8+Q)Z=0$ with
$$
    A=\frac{1}{|\zeta|}
\left(
    \begin{tabular}{c}
     $\zeta\cdot a$\\
     $\omega b$\\
   \hline
      $\zeta\cdot b$\\
      $\omega a$
    \end{tabular}
\right),
$$
satisfying
$$
      \|\Psi\|_{(2s)} (\Omega)\leq \frac{C_0}{|\zeta|} |A| \|\omega^2 I_8+Q\|_{(2s)}(B).
$$
Moreover, $Y=(P-W^t)Z$ is a solution of $(P+W)Y=0$ and
$$
    Y=
(0,\ \ H^t, \ \ 0,\ \ \gamma^{1/2}E^t)^t
$$
where $E,H$ solve (\ref{r:max}).
\end{proposition}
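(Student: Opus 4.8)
The plan is to follow the Calder\'on/Sylvester--Uhlmann construction of complex geometrical optics solutions for the $8\times 8$ Schr\"odinger system $(-\Delta I_8+Q)Z=0$, as in \cite[Proposition~9]{Ca} and the reduction of \cite{COS}, and then transfer the result to the Maxwell system by means of the factorization in Lemma~\ref{lemma4.1}.

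First I would reduce the equation for $Z=e^{i\zeta\cdot x}(A+\Psi)$ with constant $A$ to a remainder equation. Writing $Q=-\omega^2 I_8+(\omega^2 I_8+Q)$, where $\omega^2 I_8+Q$ is compactly supported in $B$, and using $\zeta\cdot\zeta=\omega^2$ together with $e^{-i\zeta\cdot x}(-\Delta)(e^{i\zeta\cdot x}f)=(-\Delta-2i\zeta\cdot\nabla+\zeta\cdot\zeta)f$, the equation $(-\Delta I_8+Q)Z=0$ becomes
\begin{equation*}
(-\Delta-2i\zeta\cdot\nabla)\Psi=-(\omega^2 I_8+Q)(A+\Psi)\quad\text{in }B,
\end{equation*}
the matrix-valued analogue of \eqref{3:psi}. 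I would then invoke the Faddeev-type resolvent estimates used in \cite{SU1} (already quoted in the proof of Proposition~\ref{cgosch}): for $|\zeta|$ larger than a fixed multiple of $\|\omega^2 I_8+Q\|_{(2s)}(B)$, a Neumann series / contraction argument in $H^{2s}(B)$ produces a solution $\Psi$ with $\|\Psi\|_{(2s)}(B)\le \tfrac{C_0}{|\zeta|}|A|\,\|\omega^2 I_8+Q\|_{(2s)}(B)$; restricting to $\Omega$ gives the stated bound. Setting $Y=(P-W^t)Z$ and applying \eqref{P1} yields $(P+W)Y=(P+W)(P-W^t)Z=(-\Delta I_8+Q)Z=0$, so $Y$ solves the augmented rescaled system.

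The heart of the matter, and the step I expect to be the main obstacle, is to show that $Y$ has the form $(0,\,H^t,\,0,\,(\gamma^{1/2}E)^t)^t$ with $(E,H)$ a solution of \eqref{r:max}. The explicit vector $A$, built from $P_\pm(\zeta)$ acting on $a,b$, is chosen precisely so that the leading (constant) $e^{i\zeta\cdot x}$-coefficient of $Y$ has vanishing first and fifth entries, i.e. the two scalar ``gauge'' components $\Phi_1,\Phi_2$ of $Y$ start only at order $1/|\zeta|$. Using the block structure of $P$ and of $W=\kappa I_8+\tfrac{1}{2}(\cdots)$, together with the remaining two factorizations in Lemma~\ref{lemma4.1}, one extracts decoupled scalar equations for $\Phi_1$ and $\Phi_2$, which turn out to be scalar Schr\"odinger equations with potentials $q_1=-\kappa^2$ and $q_2=-\tfrac{1}{2}\Delta\alpha-\kappa^2-\tfrac{1}{4}(D\alpha\cdot D\alpha)$. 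Each $\Phi_j$ is therefore a CGO solution $e^{i\zeta\cdot x}(0+\text{small remainder})$ of such an equation; by the uniqueness part of the scalar construction, valid because $|\zeta|>C_0\sum_{j}\|\omega^2+q_j\|_{(2s)}(B)$, the only such solution is $\Phi_j\equiv 0$. Undoing the rescaling $Y=\mathrm{diag}(I_4,\gamma^{1/2}I_4)X$ and recalling that an $X$ with vanishing scalar components corresponds exactly to a solution of the Maxwell system \eqref{r:max}, the assertion follows. The smallness hypothesis on $|\zeta|$ simply collects the three conditions used — one for solvability of the $8\times 8$ remainder equation and one for the uniqueness argument in each of the two scalar gauge equations — while the analytic core, being the standard Faddeev/Sylvester--Uhlmann scheme, is routine.
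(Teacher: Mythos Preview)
Your proposal is correct and follows essentially the same route as the paper, which simply defers to \cite[Proposition~9]{Ca} and notes that the $L^2$-based Faddeev estimate there is to be replaced by its $H^{2s}$ version (valid because $(-\Delta-2i\zeta\cdot\nabla)$ commutes with $(I-\Delta)^{2s}$, and $\omega^2 I_8+Q$ is compactly supported in $B$). One minor point: only the second factorization \eqref{P1}--(4.2), namely $(P-W^t)(P+W)=-\Delta I_8+Q(1)$, is needed to decouple the scalar equations for $\Phi_1,\Phi_2$ (rows $1$ and $5$ of $Q(1)$ are indeed diagonal with entries $q_1$ and $q_2$), not the third one involving $Q(2)$.
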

\begin{proof}
The proposition can be proved following  \cite[Proposition~9]{Ca}. The only difference is that we use the estimate from $H_{\delta+1}^{2s}$ to $H_{\delta}^{2s}$ with $-1<\delta<0$, that is,
\begin{align}\label{funest}
    \|G_\zeta f\|_{H_\delta^{2s}}\leq \frac{C(\delta)}{|\zeta|}\|f\|_{H_{\delta+1}^{2s}},
\end{align}
instead of \cite[(22)]{Ca} where the estimate is from $L^2_{\delta+1}$ to $L^2_{\delta}$, where $G_\zeta$ is the convolution operator with the fundamental solution of $(-\Delta-2i\zeta\cdot\nabla)$.   This estimate (\ref{funest}) holds due to the easy fact $(-\Delta-2i\zeta\cdot \nabla)$ and $(I-\Delta)^{2s}$ commute. Due to the fact that $\omega^2I_8+Q$ is compactly supported in $B$, one can replace the $H_{\delta+1}^{2s}$ norm on the right hand side of \eqref{funest} by $H^{2s}(B)$. Recall that $H^{2s}(B)$ is a Banach algebra for $s>3/4$. The rest of the proof follows  \cite[Proposition~9]{Ca}.
\end{proof}

We also need CGO solutions for the rescaled system. These solutions can be constructed as in \cite[Lemma~10, Proposition~11]{Ca}.
\begin{proposition}{\rm\cite[Lemma~10, Proposition~11]{Ca}}\label{cgo:res}
Let $a^*, b^*,\zeta\in\mathbb{C}^3$ and $\zeta\cdot\zeta=\omega^2$.
  Then there exists a constant $C_0$ depending on $s$, such that if
$$
    |\zeta|>C_0 \|\omega^2 I_8+Q(2)\|_{(2s)}(B ),
$$
then there is a solution
$$
Y^*=e^{i\zeta\cdot x}(A^*+\Psi^*)
$$
of $(P+W^*)Y^*=0$, where
$$
    A^*=\frac{1}{|\zeta|}
\left(
    \begin{tabular}{c}
     $\zeta\cdot a^*$\\
     $-\zeta\times a^*$\\
   \hline
      $\zeta\cdot b^*$\\
      $\zeta\times b^*$
    \end{tabular}
\right)
$$
and
$\Psi^*=P\Psi_*+iP(\zeta) \Psi_*-\overline{W} A_*-\overline{W}\Psi_*$ with
$A_*=\frac{1}{|\zeta|}(0,\ \ b^{*t}, \  \ 0, \ \ a^{*t})^t$,
\begin{align}\label{4:R1}
      \|\Psi_*\|_{(2s)} (\Omega)\leq \frac{C_0}{|\zeta|} |
      A_*| \|\omega^2 I_8+Q(2)\|_{(2s)}(B)
\end{align}
and
\begin{align}\label{4:R2}
      \|P\Psi_*\|_{(2s)} (\Omega)\leq C_0 ( |A_*|+\|\Psi_*\|_{(2s)} (\Omega)) \|\omega^2 I_8+Q(2)\|_{(2s)}(B)
\end{align}
 Moreover,
\begin{equation}\label{0809}
      \|\Psi^*\|_{(2s)}(\Omega)\leq C_0|A_*|\left(1 + \frac{ \|\sigma \|_{(2s+2)}(B)\omega }{|\zeta|}\right) \left(\|\omega^2 I_8+Q(2)\|_{(2s)}(B) +
      \|W\|_{(2s)}(B)\right).
\end{equation}
\end{proposition}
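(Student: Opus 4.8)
\medskip\noindent\emph{Proof sketch.}
The plan is to follow \cite[Lemma~10, Proposition~11]{Ca}, reducing the construction for the first order system to a CGO construction for the second order operator $-\Delta I_8+Q(2)$ and then applying $P-\overline W$. By the factorization \eqref{P3} of Lemma~\ref{lemma4.1} we have $(P+W^*)(P-\overline W)=-\Delta I_8+Q(2)$; hence, once $Z$ solves $(-\Delta I_8+Q(2))Z=0$, the field $Y^*:=(P-\overline W)Z$ automatically solves $(P+W^*)Y^*=0$. Exactly as in the proof of Proposition~\ref{cgo:max}, the only departure from \cite{Ca} is that we use the weighted Sobolev bound \eqref{funest} for the operator $G_\zeta$ inverting $-\Delta-2i\zeta\cdot\nabla$, together with the compact support of $\omega^2 I_8+Q(2)$ in $B$, to replace the $H^{2s}_{\delta+1}$ norm in \eqref{funest} by $\|\cdot\|_{(2s)}(B)$.

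First I would construct $Z=e^{i\zeta\cdot x}(A_*+\Psi_*)$ with the constant $8$-vector $A_*=\tfrac1{|\zeta|}(0,\ b^{*t},\ 0,\ a^{*t})^t$. Since $\zeta\cdot\zeta=\omega^2$ and $A_*$ is constant, $(-\Delta I_8+Q(2))Z=0$ is equivalent to
\[
(-\Delta-2i\zeta\cdot\nabla)\Psi_*=-(\omega^2 I_8+Q(2))(A_*+\Psi_*),\qquad\text{i.e.}\qquad \Psi_*=-G_\zeta\bigl((\omega^2 I_8+Q(2))(A_*+\Psi_*)\bigr).
\]
Because $H^{2s}(B)$ is a Banach algebra for $s>3/4$, \eqref{funest} shows the right side defines a contraction on $H^{2s}(B)$ once $|\zeta|>C_0\|\omega^2 I_8+Q(2)\|_{(2s)}(B)$, and its fixed point satisfies \eqref{4:R1}. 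For \eqref{4:R2} I would use that $PG_\zeta$ is of order zero --- that is, $\|\partial_jG_\zeta f\|_{H^{2s}_\delta}\le C\|f\|_{H^{2s}_{\delta+1}}$ with $C$ independent of $\zeta$, which follows since $-\Delta-2i\zeta\cdot\nabla$ and $(I-\Delta)^{2s}$ commute --- so that $\|P\Psi_*\|_{(2s)}(\Omega)\le C\|(\omega^2 I_8+Q(2))(A_*+\Psi_*)\|_{(2s)}(B)$, and the algebra property then gives \eqref{4:R2}.

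Next I would set $Y^*=(P-\overline W)Z$. Since $\overline W$ is multiplication by a matrix-valued function, $PA_*=0$, and $P(e^{i\zeta\cdot x}v)=e^{i\zeta\cdot x}(iP(\zeta)v+Pv)$, a direct computation gives $Y^*=e^{i\zeta\cdot x}(A^*+\Psi^*)$ with $A^*=iP(\zeta)A_*$ and $\Psi^*=P\Psi_*+iP(\zeta)\Psi_*-\overline W A_*-\overline W\Psi_*$, which is the asserted form; evaluating $iP(\zeta)$ on $A_*$ block by block through the definitions of $P_\pm(\zeta)$ produces exactly $A^*=\tfrac1{|\zeta|}(\zeta\cdot a^*,\ -\zeta\times a^*,\ \zeta\cdot b^*,\ \zeta\times b^*)^t$, while $(P+W^*)Y^*=(P+W^*)(P-\overline W)Z=(-\Delta I_8+Q(2))Z=0$. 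Finally, \eqref{0809} follows by bounding the four summands of $\Psi^*$: $\|P\Psi_*\|_{(2s)}$ by \eqref{4:R2}; $\|iP(\zeta)\Psi_*\|_{(2s)}\le|\zeta|\,\|\Psi_*\|_{(2s)}$ combined with \eqref{4:R1}, which cancels the $|\zeta|$; and $\|\overline W A_*\|_{(2s)}$, $\|\overline W\Psi_*\|_{(2s)}$ via $\|\overline W\,\cdot\|_{(2s)}\le C\|W\|_{(2s)}(B)\|\cdot\|_{(2s)}$ together with \eqref{4:R1}. Collecting terms yields
\[
\|\Psi^*\|_{(2s)}(\Omega)\le C|A_*|\bigl(1+\tfrac1{|\zeta|}\|\omega^2 I_8+Q(2)\|_{(2s)}(B)\bigr)\bigl(\|\omega^2 I_8+Q(2)\|_{(2s)}(B)+\|W\|_{(2s)}(B)\bigr),
\]
and the crude estimate $\|\omega^2 I_8+Q(2)\|_{(2s)}(B)\le C\omega\|\sigma\|_{(2s+2)}(B)$, valid for $\omega>1$ and $\sigma$ small since $\alpha=\log\gamma$ depends smoothly on $\sigma$ and $Q(2)$ involves at most two derivatives of $\alpha$ (equivalently of $\sigma$), turns this bound into \eqref{0809}.

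The main obstacle is not any single step but the quantitative bookkeeping behind \eqref{0809}: in \cite{Ca} only the qualitative existence of CGO solutions is needed, whereas here the precise way in which $\|\omega^2 I_8+Q(2)\|_{(2s)}(B)$, $\|W\|_{(2s)}(B)$, $\omega$, and $|\zeta|$ enter must be tracked, because the later stability argument exploits the smallness of $\sigma$ --- equivalently of these norms relative to $\omega$ and $|\zeta|$ --- to absorb the remainder contributions. A secondary technical point, already handled in the proof of Proposition~\ref{cgo:max}, is the order zero mapping property of $PG_\zeta$ on the weighted Sobolev scale $H^{2s}_\delta$ with $-1<\delta<0$.
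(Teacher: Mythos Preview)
Your sketch is correct and is precisely the argument the paper intends: the paper does not give an independent proof of this proposition but cites \cite[Lemma~10, Proposition~11]{Ca}, with the sole modification (already spelled out in the proof of Proposition~\ref{cgo:max}) that the $L^2_\delta$ estimate for $G_\zeta$ is replaced by the $H^{2s}_\delta$ estimate \eqref{funest}. Your reduction via the factorization \eqref{P3}, the fixed-point construction of $\Psi_*$, the identification $A^*=iP(\zeta)A_*$, and the term-by-term bookkeeping leading to \eqref{0809} all match this route.
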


\subsection{Stability estimates}
Suppose that $\varepsilon=\mu=1$, $ \|\sigma_j \|_{(2s+2)}(B)\leq m<1$ and $\supp(\sigma_1-\sigma_2)\subset\Omega$. Hence we have $\supp(\gamma_1-\gamma_2)\subset\Omega$.
We first prove an inequality that connects the unknowns and the boundary measurements.

In the proofs $C$ denote generic constants depending
on $s, \Omega$.

\begin{theorem}\label{caroq}
Assume that $\sigma_1$ and $\sigma_2$ belong to $H^{2s+2}(\Omega)$ and $\supp(\sigma_1-\sigma_2)\subset\Omega$. There exists a constant $C$ dependent on $\Omega,s$ such that, for any $Z_1\in H^{2s}(\Omega)$ satisfying $Y_1=(P-W_1^t)Z_1 =(0, \ \ H^t_1, \ \ 0, \ \ \gamma_1^{1/2}E^t_1)^t$
with $(E_1,H_1)$ solution to (\ref{r:max}) in $\Omega$ with coefficient $\sigma_1$,
and any $H^{2s}(\Omega)$ solution $Y_2=(f^1, \ \ (u^1)^t, \  \ f^2,\ \ (u^2)^t)^t$ of $(P+W^*_2)Y_2=0$, one has
\begin{align}\label{id}
     |((Q_1-Q_2)Z_1,Y_2)_\Omega|&\leq C\dist({\mathcal C}_{1},{\mathcal C}_{2})\|Y_1\|_{(1)}(\Omega)\|Y_2\|_{(1)}(\Omega).
\end{align}
\end{theorem}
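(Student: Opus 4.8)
The plan is to derive \eqref{id} from the two operator factorizations in Lemma~\ref{lemma4.1}, one integration by parts, and a cancellation obtained by inserting an auxiliary Maxwell solution for $\sigma_2$ whose tangential Cauchy data nearly matches that of $(E_1,H_1)$. By \eqref{P1} we have $-\Delta I_8+Q_j=(P+W_j)(P-W_j^t)$ for $j=1,2$, and the hypothesis on $Y_1$ forces $(P+W_1)(P-W_1^t)Z_1=(-\Delta I_8+Q_1)Z_1=0$. Subtracting the two factorizations, the Laplacians cancel and we obtain $(Q_1-Q_2)Z_1=-(P+W_2)(P-W_2^t)Z_1$. Pairing with $Y_2$ in $L^2(\Omega)^8$ and integrating by parts once in the first-order operator $P+W_2$, whose formal adjoint is $P+W_2^*$ since $P^\ast=P$, the bulk term vanishes because $(P+W_2^*)Y_2=0$, and Green's formula for $P$ leaves only a boundary contribution:
\[
\big((Q_1-Q_2)Z_1,\,Y_2\big)_\Omega=-\big\langle B_\nu\,(P-W_2^t)Z_1,\,Y_2\big\rangle_{\partial\Omega},
\]
where $B_\nu$ is the zeroth-order boundary operator built from the symbol of $P$ at the conormal $\nu$. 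Since $\supp(\sigma_1-\sigma_2)\subset\Omega$ lies at a positive distance from $\partial\Omega$, the coefficients $\gamma_j,\alpha_j,\kappa_j$, hence $W_j$, coincide near $\partial\Omega$; thus $(P-W_2^t)Z_1=(P-W_1^t)Z_1=Y_1$ there, and the identity becomes $\big((Q_1-Q_2)Z_1,Y_2\big)_\Omega=-\langle B_\nu Y_1,Y_2\rangle_{\partial\Omega}$.

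Next I would make this boundary term small. The tangential Cauchy data $(\nu\times E_1,\nu\times H_1)$ lie in $\calc_1$, so by the definition of $\dist(\calc_1,\calc_2)$ there is a solution $(E_2,H_2)$ of \eqref{r:max} with coefficient $\sigma_2$ with
\[
\big\|(\nu\times(E_1-E_2),\,\nu\times(H_1-H_2))\big\|_{(TH(\partial\Omega))^2}\le \dist(\calc_1,\calc_2)\,\|\nu\times E_1\|_{TH(\partial\Omega)}\le C\,\dist(\calc_1,\calc_2)\,\|Y_1\|_{(1)}(\Omega),
\]
using $\nabla\times E_1=i\omega H_1$ and $\|\nu\times v\|_{TH(\partial\Omega)}\le\|v\|_{H(\curl;\Omega)}$. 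Put $Y_2':=(0,\,H_2^t,\,0,\,\gamma_2^{1/2}E_2^t)^t$; by the reduction to the vectorial Schr\"odinger equation recalled above it solves $(P+W_2)Y_2'=0$. Applying Green's formula for $P$ to the matched pair $(P+W_2)Y_2'=0$ and $(P+W_2^*)Y_2=0$ yields $\langle B_\nu Y_2',Y_2\rangle_{\partial\Omega}=0$, and therefore $\big((Q_1-Q_2)Z_1,Y_2\big)_\Omega=-\langle B_\nu(Y_1-Y_2'),Y_2\rangle_{\partial\Omega}$.

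It remains to estimate this boundary pairing. Both $Y_1$ and $Y_2'$ have vanishing scalar components, and on $\partial\Omega$ one has $\gamma_1=\gamma_2=:\gamma$, so there $Y_1-Y_2'=(0,(H_1-H_2)^t,0,\gamma^{1/2}(E_1-E_2)^t)^t$. Following \cite{Ca}, $B_\nu$ applied to an $8$-vector of this form involves only the tangential traces $\nu\times(E_1-E_2),\ \nu\times(H_1-H_2)$ and the normal traces $\nu\cdot(E_1-E_2),\ \nu\cdot(H_1-H_2)$; since $(E_j,H_j)$ solves the Maxwell system near $\partial\Omega$, the normal traces are, up to the factor $i\omega$ and multiplication by $\gamma$, the surface divergences of the tangential traces, hence controlled by the $TH(\partial\Omega)$-norms. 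Together with the trace bound $\|Y_2\|_{(1/2)}(\partial\Omega)\le C\|Y_2\|_{(1)}(\Omega)$ and the duality for the trace spaces of \cite{Ca}, \cite{COS}, this gives $|\langle B_\nu(Y_1-Y_2'),Y_2\rangle_{\partial\Omega}|\le C\|(\nu\times(E_1-E_2),\nu\times(H_1-H_2))\|_{(TH(\partial\Omega))^2}\|Y_2\|_{(1)}(\Omega)$, and combined with the previous display this is \eqref{id}. The routine ingredients are the two factorizations, the single integration by parts, and the matched-pair cancellation $\langle B_\nu Y_2',Y_2\rangle_{\partial\Omega}=0$; the main obstacle is precisely this last step --- pinning down $B_\nu$ on Maxwell-type $8$-vectors and bounding the resulting boundary form by the $(TH(\partial\Omega))^2$-distance of the Cauchy data --- which is where the boundary trace machinery of \cite{Ca} is needed.
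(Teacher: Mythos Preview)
Your proposal is correct and follows essentially the same route as the paper. The paper's proof is compressed to two citations of \cite{Ca}: first the identity $((Q_1-Q_2)Z_1,Y_2)_\Omega=(Y_1,PY_2)_\Omega-(PY_1,Y_2)_\Omega$ (the bulk form of your boundary term $-\langle B_\nu Y_1,Y_2\rangle_{\partial\Omega}$), and second the bound of that quantity by $C\,\dist(\calc_1,\calc_2)\|Y_1\|_{(1)}\|Y_2\|_{(1)}$ via an auxiliary Maxwell solution $(E_2,H_2)$ for $\sigma_2$, exactly the $Y_2'$ you insert. Your derivation of the identity from the factorization $(Q_1-Q_2)Z_1=-(P+W_2)(P-W_2^t)Z_1$ together with $(P+W_2^*)Y_2=0$ and the observation that $W_1=W_2$ near $\partial\Omega$ is precisely what underlies the cited step in \cite{Ca}; your matched-pair cancellation $\langle B_\nu Y_2',Y_2\rangle_{\partial\Omega}=0$ and the control of the normal traces by surface divergences of the tangential data is again the content of the boundary machinery in \cite{Ca} that the paper invokes.
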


Note that $Q_j$ is defined in \eqref{qqq} corresponding to $\sigma_j$ for $j=1,2$. The matrix-valued functions $W_{1}, W_2$ are defined similarly.

\begin{proof}
Using $\supp(\sigma_1-\sigma_2)\subset\Omega$, from the first identity in the proof of \cite[Proposition~7]{Ca} one has
\begin{equation}\label{Ca7}
((Q_1-Q_2)Z_1,Y_2)_\Omega
=(Y_1,PY_2)_\Omega-(PY_1,Y_2)_\Omega.
\end{equation}
Moreover, using the estimate of the right hand of \eqref{Ca7} derived in \cite{Ca} (see the inequality before \cite[Proposition~7]{Ca}) and the support assumption of $\sigma_1-\sigma_2$, we can obtain that \begin{align}\label{Ca6}
&|(Y_1,PY_2)_\Omega-(PY_1,Y_2)_\Omega|\notag\\
&\leq  C( \|\nu\times E_1-\nu\times E_2\|_{TH(\partial\Omega)}+ \|\nu\times H_1-\nu\times H_2\|_{TH(\partial\Omega)})\|Y_2\|_{(1)}(\Omega) \notag\\
&\leq  C\dist({\mathcal C}_{1},{\mathcal C}_{2})\|\nu\times E_1 \|_{TH(\partial\Omega)}\|Y_2\|_{(1)}(\Omega)\notag\\
&\leq C\dist({\mathcal C}_{1},{\mathcal C}_{2})\|Y_1\|_{(1)}(\Omega)\|Y_2\|_{(1)}(\Omega),
\end{align}
where $(\nu\times E_2, \nu\times H_2)\in \mathcal{C}_2$.
The required estimate follows from (\ref{Ca7}) and (\ref{Ca6}).
\end{proof}

Let $\xi\in \mathbb{R}^3$. We select $e(j)\in \mathbb{R}^3$ satisfying $|e(1)|=|e(2)|=1$ and $e(j)\cdot\xi=e(1)\cdot e(2)=0$ for $j=1,2$. Let $R>0$. We choose
 $$
 \zeta(1) =-\frac{1}{2}\xi+
 i \left(\frac{R^2}{2}\right)^{1/2} e(1)+
 \left( \omega^2+\frac{R^2}{2}-
 \frac{|\xi|^2}{4}\right)^{1/2}e(2),
 $$
 \begin{equation*}
 \zeta(2) = \frac{1}{2}\xi-i\left(\frac{R^2}{2}\right)^{1/2} e(1)+\left( \omega^2+\frac{R^2}{2}-
 \frac{|\xi|^2}{4}\right)^{1/2}e(2), 
 \end{equation*}
where we assume 
$$
   \omega^2+\frac{R^2}{2}\geq \frac{|\xi|^2}{4}.
$$
Then 
$$
    \zeta(1)-\overline{\zeta}(2)=-\xi,\ \zeta(j)\cdot\zeta(j)=\omega^2,\ |\zeta(j)|^2=R^2+\omega^2,\ \ \mbox{for $j=1,2$}.
$$

To construct CGO solutions of Proposition \ref{cgo:max} with $Q=Q_1$ corresponding to $\sigma_1$, we choose $a(1)=0$ and
$$
b(1)=-i\frac{e(1)}{\sqrt{2}}+\frac{e(2)}{\sqrt{2}}.
$$
By direct calculation, we can see that
\begin{equation*}\label{0813}
     \|\omega^2+q_j\|_{(2s)}(B)+ \|\omega^2 I_8+Q_1\|_{(2s)}(B)\leq C_2\|\sigma_1\|_{(2s+2)}(B)\omega+C_3,
\end{equation*}
where $C_2$, $C_3$ are independent of $\omega$.
If we take
\begin{align}\label{a1}
  R+\omega> 2^{1/2}(C_0 C_2 \|\sigma_1\|_{(2s+2)}(B)\omega+C_0C_3),
\end{align}
then
$$
    |\zeta(1)|\geq 2^{-1/2}(R+\omega)>C_0\left( \|\omega^2+q_j\|_{(2s)}(B)+
    \|\omega^2 I_8+Q_1\|_{(2s)}(B)\right).
$$
By Proposition \ref{cgo:max}, there exist solutions
$$
Z_1=e^{i\zeta(1)\cdot x}(A(1)+\Psi_1)
$$
of $(-\Delta I_8+Q_1)Z_1=0$ with
$$
    A(1)=\frac{1}{|\zeta(1)|}
\left(
    \begin{tabular}{c}
     $0$\\
     $\omega b(1)$\\
   \hline
      $\zeta(1)\cdot b(1)$\\
      $0$
    \end{tabular}
\right)
$$
and
\begin{equation*}\label{R1}
            \|\Psi_1\|_{(2s)} (\Omega)  \leq
            \frac{C_0}{|\zeta(1)|} |A(1)|\|\omega^2 I_8+Q(1)\|_{(2s)}(B ).
\end{equation*}

Likewise, if
\begin{align}\label{a2}
   R+\omega> 2^{1/2}(C_0C_2  \|\sigma_2\|_{(2s+2)}(B)\omega+C_0C_3),
\end{align}
then one has
$$
    |\zeta(2)|>C_0 \|(\omega^2 I_8+Q(2)_2\|_{(2s)}(B),
$$
where $Q(2)_2$ is the matrix-valued function $Q(2)$ corresponding to $\sigma=\sigma_2$. By choosing $a^*=0$ and
$$
b^*=b(2)=i\frac{e(1)}{\sqrt{2}}+\frac{e(2)}{\sqrt{2}}
$$
in Proposition \ref{cgo:res} with $Q(2)$ corresponding to $\sigma_2$, there exist solutions
$$
     Y_2=e^{i\zeta(2)\cdot x}(A^*(2)+\Psi^*_2)
$$
of $(P+W_2^*)Y_2=0$ with
$$
    A^*(2) =\frac{1}{|\zeta(2)|}
\left(
    \begin{tabular}{c}
     $0$\\
     $0$\\
   \hline
      $\zeta(2)\cdot b(2)$\\
      $\zeta(2)\times b(2)$
    \end{tabular}
\right)
$$
and $\Psi^*_2=P\Psi_{2*}+iP(\zeta(2)) \Psi_{2*}-\overline{W_2}A_{2*}-
\overline{W_2}\Psi_{2*}$, where $A_{2*}=|\zeta(2)|^{-1}(0,\ \ b(2), \ \ 0, \ \ 0)^t$ and $\Psi_{2*}$ satisfies (\ref{4:R1}), (\ref{4:R2}). Moreover,
\begin{align}\label{S2}
      \|\Psi^*_2\|_{(2s)}(\Omega)&
\leq C_0\left(\frac{1}{|\zeta(2)|} + \frac{ \|\sigma_2 \|_{(2s+2)}(B)\omega }{|\zeta(2)|^2}\right) \|\omega^2 I_8+Q(2)_2\|_{(2s)}(B)\notag\\
&\quad+C_0\left(\frac{1 }{|\zeta(2)|} +\frac{ 
\|\sigma_2 \|_{(2s+2)}(B)\omega}{|\zeta(2)|^2}\right) \|W_2\|_{(2s)}(B)
\end{align}
(see \eqref{0809}).

For $Y_2$, we can obtain the following estimate.
\begin{lemma}
Let $Y_2$ be the CGO solution of $(P+W_2^*)Y_2=0$ as above. Then for $\omega\geq 1$ there exists a constant $C$ depending on $s,\Omega$ such that
\begin{align}\label{5:Y2}
&\|Y_2\|_{(1)}(\Omega)\leq
C\left(1+|\zeta(2)| \right) e^{  2^{-1/2}R}.
\end{align}
\end{lemma}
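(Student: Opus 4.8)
The plan is to estimate $\|Y_2\|_{(1)}(\Omega)$ by going through the explicit representation $Y_2=e^{i\zeta(2)\cdot x}(A^*(2)+\Psi^*_2)$ and bounding each ingredient in terms of $|\zeta(2)|=\sqrt{R^2+\omega^2}$ and the exponential factor coming from $\Im\zeta(2)$. First I would record that $|\Im\zeta(2)|=2^{-1/2}R$, so that $|e^{i\zeta(2)\cdot x}|\le e^{2^{-1/2}R}$ on $\Omega\subset B$, and that its gradient contributes a factor $|\zeta(2)|$. Next I would bound the amplitude $A^*(2)$: since $a^*=0$ and $|b(2)|=1$, we have $|A^*(2)|\le C$ (each nonzero block is $|\zeta(2)|^{-1}\zeta(2)\cdot b(2)$ or $|\zeta(2)|^{-1}\zeta(2)\times b(2)$, both bounded). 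So the principal term $e^{i\zeta(2)\cdot x}A^*(2)$ contributes at most $Ce^{2^{-1/2}R}$ in $L^2$, and its derivative at most $C|\zeta(2)|e^{2^{-1/2}R}$.

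The remaining work is to control $\|\Psi^*_2\|_{(1)}(\Omega)$ together with $\|\nabla\Psi^*_2\|_{(0)}(\Omega)$. Here I would invoke the decomposition $\Psi^*_2=P\Psi_{2*}+iP(\zeta(2))\Psi_{2*}-\overline{W_2}A_{2*}-\overline{W_2}\Psi_{2*}$ from Proposition~\ref{cgo:res}. The Sobolev embedding $H^{2s}\hookrightarrow H^1$ (valid since $s>3/2$) plus the estimates \eqref{4:R1}, \eqref{4:R2}, and \eqref{S2} give $\|\Psi_{2*}\|_{(2s)}\le C|\zeta(2)|^{-1}(\ldots)$, $\|P\Psi_{2*}\|_{(2s)}\le C(\ldots)$, and $\|\Psi^*_2\|_{(2s)}\le C$ after using the smallness of $m$ and $\omega/|\zeta(2)|\le 1$. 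The term $iP(\zeta(2))\Psi_{2*}$ is the only one carrying a factor $|\zeta(2)|$: since $P(\zeta(2))$ is a matrix linear in $\zeta(2)$, $\|P(\zeta(2))\Psi_{2*}\|_{(2s)}\le C|\zeta(2)|\|\Psi_{2*}\|_{(2s)}\le C\|\omega^2 I_8+Q(2)_2\|_{(2s)}(B)\le C$, using $\|\sigma_2\|_{(2s+2)}(B)\le m\le 1$ and $\omega$-independence of the structural constants as in the computation preceding \eqref{a1}. Hence $\|\Psi^*_2\|_{(2s)}(\Omega)\le C$ with $C$ depending only on $s,\Omega$, and in particular $\|\Psi^*_2\|_{(1)}(\Omega)\le C$.

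Putting these together: $\|Y_2\|_{(0)}(\Omega)\le e^{2^{-1/2}R}(|A^*(2)|+\|\Psi^*_2\|_{(0)})|\Omega|^{1/2}\le Ce^{2^{-1/2}R}$, and for the gradient,
\[
\nabla Y_2=i\zeta(2)\,e^{i\zeta(2)\cdot x}(A^*(2)+\Psi^*_2)+e^{i\zeta(2)\cdot x}\nabla\Psi^*_2,
\]
so $\|\nabla Y_2\|_{(0)}(\Omega)\le Ce^{2^{-1/2}R}(|\zeta(2)|(|A^*(2)|+\|\Psi^*_2\|_{(0)})+\|\nabla\Psi^*_2\|_{(0)})\le C(1+|\zeta(2)|)e^{2^{-1/2}R}$. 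Adding the two yields \eqref{5:Y2}.

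The main obstacle, and the step requiring the most care, is verifying that $\|\Psi^*_2\|_{(2s)}(\Omega)$ is bounded by a constant independent of $\omega$ and $R$: one must check that in each term of the decomposition of $\Psi^*_2$ the dangerous growing factors $|\zeta(2)|$ and $\omega$ are always accompanied by a compensating $|\zeta(2)|^{-1}$ or $|\zeta(2)|^{-2}$, and that the quantities $\|\omega^2 I_8+Q(2)_2\|_{(2s)}(B)$ and $\|W_2\|_{(2s)}(B)$ are $\le C(1+\|\sigma_2\|_{(2s+2)}(B)\omega)\le C$ after the smallness hypothesis \eqref{cond2} is used together with $\omega/|\zeta(2)|\le 1$ (because $|\zeta(2)|\ge\omega$). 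This is exactly the bookkeeping already implicit in \eqref{S2} and \eqref{4:R1}–\eqref{4:R2}, but it must be done cleanly so that the only surviving $R,\omega$-dependence is the harmless $(1+|\zeta(2)|)e^{2^{-1/2}R}$.
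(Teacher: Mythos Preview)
Your approach is essentially the same as the paper's: split $Y_2=e^{i\zeta(2)\cdot x}A^*(2)+e^{i\zeta(2)\cdot x}\Psi^*_2$, bound the constant amplitude part using $|A^*(2)|\le C$ and $|\Im\zeta(2)|=2^{-1/2}R$, and control $\Psi^*_2$ via the estimates of Proposition~\ref{cgo:res}. The paper in fact skips the term-by-term decomposition of $\Psi^*_2$ and simply reads off $\|\Psi^*_2\|_{(1)}(\Omega)\le C\omega/|\zeta(2)|\le C$ directly from \eqref{S2}.

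There is, however, one genuine slip in your bookkeeping. The assertion that $\|\omega^2 I_8+Q(2)_2\|_{(2s)}(B)\le C$ and $\|W_2\|_{(2s)}(B)\le C$ independently of $\omega$ is false: since $\kappa^2=\omega^2\gamma=\omega^2+i\omega\sigma$ and $W_2$ contains $\kappa I_8$ with $\kappa\sim\omega$, both norms are only $\le C\omega$ (this is exactly what the computation preceding \eqref{a1} says). The smallness of $m$ does not kill this $\omega$. What saves you is the factor you dropped: $|A_*|=|b(2)|/|\zeta(2)|=1/|\zeta(2)|$, not a constant. With this in place, \eqref{4:R1} gives $\|\Psi_{2*}\|_{(2s)}\le C\omega/|\zeta(2)|^2$, so $\|P(\zeta(2))\Psi_{2*}\|_{(2s)}\le C\omega/|\zeta(2)|\le C$, and similarly every term in the decomposition of $\Psi^*_2$ is bounded by $C\omega/|\zeta(2)|\le C$. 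Once you restore the $|A_*|$ factor and use the correct bound $\le C\omega$ for the potential norms, your argument is complete and matches the paper's.
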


\begin{proof}

 Since $\nabla (e^{i\zeta(2)\cdot x}A^*(2))=i\zeta(2)e^{i\zeta(2)\cdot x}A^*(2)$ and
$|A^*(2)|\leq 2^{1/2},$
one gets
$$
   \|e^{i\zeta(2)\cdot x}A^*(2)\|_{(1)}(\Omega)\leq
 C( e^{2^{-1/2}R}|A^*(2)|+|\zeta(2)|e^{  2^{-1/2}R}|A^*(2)|)
\leq C(1+|\zeta(2)|)e^{ 2^{-1/2}R}.
$$
By direct calculation,
$$
\|\omega^2 I_8+Q(2)_2\|_{(2s)}(B )\leq C \omega,
\ \
\|W_2\|_{(2s)}(B)\leq C\omega ,
$$
where we used the assumption $\|\sigma_2\|_{(2s+2)}(\Omega)\le m<1$. Thus, from (\ref{S2}) we have
$
   \|\Psi^*_2\|_{(1)}(\Omega)
\leq  C\frac{\omega}{|\zeta(2)|}
$
which implies that
\begin{align*}
    \|e^{i\zeta(2)\cdot x}\Psi^*_2\|_{(1)}(\Omega)
&\leq \|e^{i\zeta(2)\cdot x}\Psi^*_2\|_{(0)}(\Omega)+
\|i\zeta(2)e^{i\zeta(2)\cdot x}\Psi^*_2\|_{(0)}(\Omega)+
\|e^{i\zeta(2)\cdot x}\nabla \Psi^*_2\|_{(0)}(\Omega)\\
&\leq C(1+|\zeta(2)|)\frac{\omega}{|\zeta(2)|}e^{  2^{-1/2}R} \leq C(1+|\zeta(2)|)e^{  2^{-1/2}R}.
\end{align*}
The proof is complete.
\end{proof}

Similarly, we can prove that
\begin{lemma}
Let $Z_1$ be the CGO solution of $(-\Delta I_8+Q_1)Z_1=0$ and $Y_1=(P-W_1^t)Z_1$ as above. Then for $\omega\geq 1$ there exists a constant $C$ depending on $s, \Omega$ such that
\begin{align}\label{5:Y1}
&\|Y_1\|_{(1)}(\Omega)\leq
C\left(1+|\zeta(1)| \right)^2 e^{  2^{-1/2}R}.
\end{align}
\end{lemma}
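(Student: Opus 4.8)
The plan is to mimic the proof of the preceding lemma for $Y_2$, but now account for the extra structure of $Y_1=(P-W_1^t)Z_1$, which involves applying a first-order operator to $Z_1$ and so costs one extra power of $|\zeta(1)|$. First I would write $Z_1 = e^{i\zeta(1)\cdot x}(A(1)+\Psi_1)$ and estimate $\|Z_1\|_{(1)}(\Omega)$ exactly as in the $Y_2$ case: from $|A(1)|\le C$ (since $|\zeta(1)\cdot b(1)|\le|\zeta(1)|$ and the normalization divides by $|\zeta(1)|$) and $|\nabla(e^{i\zeta(1)\cdot x}A(1))| \le |\zeta(1)|e^{|\Im\zeta(1)|}|A(1)|$ together with $|\Im\zeta(1)| = R/\sqrt2$, one gets $\|e^{i\zeta(1)\cdot x}A(1)\|_{(1)}(\Omega) \le C(1+|\zeta(1)|)e^{R/\sqrt2}$. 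For the remainder term, the bound $\|\Psi_1\|_{(2s)}(\Omega)\le \frac{C_0}{|\zeta(1)|}|A(1)|\,\|\omega^2 I_8+Q_1\|_{(2s)}(B) \le C\omega/|\zeta(1)|$ (using $\|\sigma_1\|_{(2s+2)}\le m<1$ so that $\|\omega^2 I_8+Q_1\|_{(2s)}(B)\le C\omega$, and $s>3/2$ gives Sobolev control of the $H^1$ norm by the $H^{2s}$ norm) yields $\|e^{i\zeta(1)\cdot x}\Psi_1\|_{(1)}(\Omega)\le C(1+|\zeta(1)|)\frac{\omega}{|\zeta(1)|}e^{R/\sqrt2}\le C(1+|\zeta(1)|)e^{R/\sqrt2}$, since $\omega\le|\zeta(1)|$. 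Combining, $\|Z_1\|_{(1)}(\Omega)\le C(1+|\zeta(1)|)e^{R/\sqrt2}$.

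Next I would pass from $Z_1$ to $Y_1 = (P-W_1^t)Z_1$. Since $P$ is a constant-coefficient first-order operator, $\|PZ_1\|_{(0)}(\Omega)\le C\|Z_1\|_{(1)}(\Omega)$, and more generally $\|PZ_1\|_{(1)}(\Omega)\le C\|Z_1\|_{(2)}(\Omega)$; the multiplication operator $W_1^t$ is bounded on $H^1$ with norm $\le C\|W_1\|_{(2s)}(B)\le C\omega$ (again using $s>3/2$ and $\|\sigma_1\|_{(2s+2)}\le m<1$). Hence $\|Y_1\|_{(1)}(\Omega)\le C\|Z_1\|_{(2)}(\Omega) + C\omega\|Z_1\|_{(1)}(\Omega)$. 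To control $\|Z_1\|_{(2)}(\Omega)$ I would repeat the computation above one derivative higher: $\|e^{i\zeta(1)\cdot x}A(1)\|_{(2)}(\Omega)\le C(1+|\zeta(1)|)^2 e^{R/\sqrt2}$, and for the remainder $\|e^{i\zeta(1)\cdot x}\Psi_1\|_{(2)}(\Omega)\le C(1+|\zeta(1)|)^2\|\Psi_1\|_{(2s)}(\Omega)e^{R/\sqrt2}\le C(1+|\zeta(1)|)^2 \frac{\omega}{|\zeta(1)|} e^{R/\sqrt2}\le C(1+|\zeta(1)|)^2 e^{R/\sqrt2}$ (here I use $2s\ge 2$, valid since $s>3/2$ is an integer so $s\ge 2$, to bound the $H^2$ norm of $\Psi_1$ by its $H^{2s}$ norm). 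Thus $\|Z_1\|_{(2)}(\Omega)\le C(1+|\zeta(1)|)^2 e^{R/\sqrt2}$, and since $\omega\|Z_1\|_{(1)}(\Omega)\le C\omega(1+|\zeta(1)|)e^{R/\sqrt2}\le C(1+|\zeta(1)|)^2e^{R/\sqrt2}$, we conclude $\|Y_1\|_{(1)}(\Omega)\le C(1+|\zeta(1)|)^2 e^{R/\sqrt2}$, which is exactly \eqref{5:Y1}.

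The main subtlety — though not really an obstacle — is the appearance of the exponent $2$ on $(1+|\zeta(1)|)$ in \eqref{5:Y1} versus exponent $1$ in \eqref{5:Y2}. This extra power arises precisely because $Y_1$ is obtained from $Z_1$ by applying the first-order operator $P-W_1^t$, whereas $Y_2$ is itself the CGO remainder expansion, so one must estimate $Z_1$ in $H^2$ rather than $H^1$; the conjugation $e^{-i\zeta(1)\cdot x}\partial e^{i\zeta(1)\cdot x}$ produces the factor $\zeta(1)$, and doing this twice gives $|\zeta(1)|^2$. A secondary point to be careful about is that all the implied constants must remain independent of $\omega$ and $R$: this is ensured by the normalizations built into Proposition~\ref{cgo:max} (the division by $|\zeta(1)|$ in $A(1)$) and by the hypothesis $\|\sigma_j\|_{(2s+2)}(B)\le m<1$, which keeps $\|\omega^2 I_8+Q_1\|_{(2s)}(B)$ and $\|W_1\|_{(2s)}(B)$ linearly bounded in $\omega$, so that every $\omega/|\zeta(1)|$ ratio appearing is $\le 1$.
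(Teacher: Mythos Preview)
Your proof is correct and follows essentially the same approach as the paper. The paper splits $Z_1$ into $e^{i\zeta(1)\cdot x}A(1)$ and $e^{i\zeta(1)\cdot x}\Psi_1$ first and then applies $P-W_1^t$ to each piece, whereas you first bound $\|Z_1\|_{(2)}(\Omega)$ and then use $\|PZ_1\|_{(1)}\le C\|Z_1\|_{(2)}$; but the underlying computation---that conjugating a derivative through $e^{i\zeta(1)\cdot x}$ costs a factor $|\zeta(1)|$, and that $W_1^t$ contributes at most a factor $\omega\le|\zeta(1)|$---is identical in both arguments.
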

\begin{proof}
Substituting $ Z_1=e^{i\zeta(1)\cdot x}(A(1)+\Psi_1)$ gives
$$
Y_1=(P-W_1^t)Z_1=(P-W_1^t)(e^{i\zeta(1)\cdot x}(A(1)+\Psi_1)).
$$
We evaluate 
\begin{align*}
&\|(P-W_1^t)(e^{i\zeta(1)\cdot x}A(1))\|_{(1)}(\Omega)\\
&\leq C |\zeta(1)|  \| e^{i\zeta(1)\cdot x}A(1)   \|_{(1)}(\Omega)+C \| W_1^te^{i\zeta(1)\cdot x}A(1)   \|_{(1)}(\Omega)\\
&\leq C(1+|\zeta(1)|  )^2e^{2^{-1/2} R}
\end{align*}
and
\begin{align*}
&\|(P-W_1^t)(e^{i\zeta(1)\cdot x}\Psi_1)\|_{(1)}(\Omega)\\
&\leq  C |\zeta(1)|  \| e^{i\zeta(1)\cdot x}\Psi_1   \|_{(1)}(\Omega)+C \| W_1^t e^{i\zeta(1)\cdot x}\Psi_1 \|_{(1)}(\Omega)    + C \|e^{i\zeta(1)\cdot x}P\Psi_1 \|_{(1)}(\Omega)\\
&\leq C(1+|\zeta(1)| )^2e^{2^{-1/2} R},
\end{align*}
which completes the proof.
\end{proof}

We now choose $m<1$ satisfying $2^{1/2}C_0 C_2 m<1$ and $ R>2^{1/2}C_0 C_3$, i.e., \eqref{a1} and \eqref{a2} hold.  Combining \eqref{id}, \eqref{5:Y2} and \eqref{5:Y1} yields that
\begin{align}
\label{id2}
    |((Q_1-Q_2)Z_1,Y_2)_\Omega|
&\leq C\dist({\mathcal C}_{1},{\mathcal C}_{2})\|Y_1\|_{(1)}(\Omega) \|Y_2\|_{(1)}(\Omega)\notag\\
&\leq C  (1+|\zeta(2)|)^3 e^{2^{ 1/2}R}\dist({\mathcal C}_{1},{\mathcal C}_{2}).
\end{align}

\begin{lemma}
\label{5:gamma}
Let $s>3/2$ and $\xi=r e$ with $r\geq 0, |e|=1$. Under the assumptions of Theorem \ref{caroq}, there exist a constant $C_\Omega$ depending on $\Omega$ only and a constant $C$ depending on $s,\Omega$ such that
\begin{align}\label{fourmax}
 | (\hat{\sigma}_1-\hat{\sigma}_2)(re)|
&\leq  \frac{ C_\Omega +Cm}{\beta(\xi)^2 } {\mathcal G}({\nabla^2\tilde\sigma})(\xi)
      +  \frac{C_\Omega \beta(\xi) +Cm(\omega+\beta(\xi))}{\beta(\xi)^2   } {\mathcal G}(\nabla\tilde\sigma)(\xi)   \notag\\
      &\quad+   \frac{C_\Omega \omega^{-1}\beta(\xi) +Cm(\omega^2+\omega\beta(\xi))}{\beta(\xi)^2   }{\mathcal G}(\tilde\sigma)(\xi)  
      + C{ \frac{(\omega^2+ R^2)^{7/2}}{\omega\beta(\xi)^2}   } e^{{ 2^{ 1/2}}R}\dist({\mathcal C}_{1},{\mathcal C}_{2}),
\end{align}
where we denote
$$
\beta(\xi)=R+\left(2\omega^2+ R^2 -\frac{|\xi|^2}{2} \right)^{1/2}, 
$$
$\tilde\sigma=\sigma_1-\sigma_2$, and 
\begin{align*}
{\mathcal G}(f)(\xi)^2=\int \langle x\rangle^{-4s}  \int \langle -\eta+\xi-x\rangle ^{-2 s}  |\hat{f}(\eta)|^2 d\eta dx. 
\end{align*}
\end{lemma}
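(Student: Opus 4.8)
The plan is to substitute the two CGO solutions $Z_1 = e^{i\zeta(1)\cdot x}(A(1)+\Psi_1)$ and $Y_2 = e^{i\zeta(2)\cdot x}(A^*(2)+\Psi_2^*)$ into the orthogonality identity \eqref{id2} and extract the Fourier transform of $\tilde\sigma$ from the resulting expression. The key point is that $\zeta(1)-\overline{\zeta(2)} = -\xi$, so that $e^{i\zeta(1)\cdot x}\,\overline{e^{i\zeta(2)\cdot x}} = e^{-i\xi\cdot x}$ and the product of the two exponential factors in $((Q_1-Q_2)Z_1, Y_2)_\Omega$ produces $e^{-i\xi\cdot x}$, which is exactly what is needed to generate $\widehat{(Q_1-Q_2)\,\text{stuff}}(\xi)$. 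First I would write $((Q_1-Q_2)Z_1,Y_2)_\Omega$ as the integral of $(Q_1-Q_2)(A(1)+\Psi_1)\cdot\overline{(A^*(2)+\Psi_2^*)}\,e^{-i\xi\cdot x}$, then isolate the leading ``principal'' term coming from $A(1)$ paired with $A^*(2)$, and treat all terms involving $\Psi_1$ or $\Psi_2^*$ as remainders.

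The second step is to identify the leading term. The matrix potential $Q_1-Q_2$ from \eqref{qqq} has entries of the form $\tfrac12\Delta\tilde\alpha$, $\nabla^2\tilde\alpha$, $\tfrac14(D\alpha_1\cdot D\alpha_1 - D\alpha_2\cdot D\alpha_2)$, $D\tilde\kappa$ and $\tilde\kappa^2$ terms (where $\tilde\alpha = \alpha_1-\alpha_2$, etc.). Since $\alpha_j = \log\gamma_j = \log(1+i\sigma_j/\omega)$, to leading order $\alpha_j \approx i\sigma_j/\omega$ plus higher-order-in-$\sigma$ corrections, so $\Delta\tilde\alpha \approx (i/\omega)\Delta\tilde\sigma$ up to terms that are quadratic or higher in $\sigma$ (hence bounded by $Cm$ times lower-order quantities once the smallness assumption $\|\sigma_j\|_{(2s+2)}\le m$ is in force). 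The purely $\omega^2$-type contributions ($-\kappa_j^2$ with $\kappa_j = \omega\gamma_j^{1/2}$) carry the $\tilde\sigma$ itself. I would carefully bookkeep which entries of $Q_1-Q_2$, paired against which components of $A(1)$ and $A^*(2)$ (computed from the explicit choices $a(1)=0$, $b(1) = -ie(1)/\sqrt2 + e(2)/\sqrt2$, $a^*=0$, $b(2) = ie(1)/\sqrt2 + e(2)/\sqrt2$), produce a nonvanishing constant, and verify that the vectors $b(1), b(2)$ are arranged so that this inner product is bounded below uniformly — this is where the $\beta(\xi)^{-2}$ weight arises, since $|A(1)|\,|A^*(2)|$ and the $|\zeta(j)|^{-1}$ normalizations in $A(1), A^*(2)$ combine to give a factor like $|\zeta(1)\cdot b(1)|\,|\zeta(2)\cdot b(2)|/(|\zeta(1)||\zeta(2)|)$, and $\beta(\xi) = R + (2\omega^2 + R^2 - |\xi|^2/2)^{1/2}$ is precisely (a lower bound for) the quantity $|\zeta(1)\cdot b(1)| = |\zeta(2)\cdot b(2)|$ given the chosen polarization vectors. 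Separating the $\Delta\tilde\alpha$, $\nabla\tilde\alpha$ and $\tilde\alpha$-proportional pieces then yields the three terms with ${\mathcal G}(\nabla^2\tilde\sigma)$, ${\mathcal G}(\nabla\tilde\sigma)$, ${\mathcal G}(\tilde\sigma)$ and the prefactors displayed in \eqref{fourmax}; the $C_\Omega$ contributions come from the ``linearized'' (first-order in $\sigma$) parts and the $Cm$ contributions from the genuinely nonlinear parts, which is why the former carry no extra $\omega$ but the latter can.

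The third step is to bound the remainder terms, i.e.\ every term in the expansion of $((Q_1-Q_2)Z_1,Y_2)_\Omega$ that contains at least one factor of $\Psi_1$ or $\Psi_2^*$. For these I would use the same Fourier/Hölder argument as in the proof of Lemma~\ref{lemma3.4} in the Schrödinger case: write $(Q_1-Q_2)(\cdot)\Psi(\cdot)e^{-i\xi\cdot x}$ via Parseval and the convolution formula, insert a cutoff $\chi$ that equals $1$ on $\supp\tilde\sigma$, and peel off $\langle -\eta+\xi-x\rangle^{-s}\langle -\eta+\xi-x\rangle^{s}$ to land on the ${\mathcal G}(\cdot)$ quantities times $\|\Psi\|_{(2s)}$. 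Using the bounds \eqref{R1}, \eqref{S2} and \eqref{0809} — which give $\|\Psi_1\|_{(2s)}, \|\Psi_2^*\|_{(2s)} \lesssim \omega/|\zeta| \lesssim \omega/(R+\omega)$ after invoking $\|\sigma_j\|_{(2s+2)}\le m<1$ — these remainders are controlled by $Cm$ (or $Cm\,\omega/\beta$, etc.) times the appropriate ${\mathcal G}$. Finally I would move the boundary-data contribution: from \eqref{id2} the whole inner product is bounded by $C(1+|\zeta(2)|)^3 e^{2^{1/2}R}\,\dist(\mathcal C_1,\mathcal C_2)$, and dividing through by the lower bound on the principal coefficient (which is $\gtrsim \beta(\xi)^2/(\omega|\zeta(2)|)$ or so, after accounting for the $1/|\zeta|$ normalizations and the one power of $1/\omega$ coming from $\alpha\approx i\sigma/\omega$) produces the last term $C(\omega^2+R^2)^{7/2}(\omega\beta(\xi)^2)^{-1} e^{2^{1/2}R}\,\dist(\mathcal C_1,\mathcal C_2)$, using $|\zeta(2)|^2 = R^2+\omega^2$. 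The main obstacle I anticipate is the bookkeeping in the second step: one must carefully track the several matrix entries of $Q_1-Q_2$ (each a different differential expression in $\tilde\alpha$ and $\tilde\kappa$), pair each with the correct block of the explicit 8-vectors $A(1), A^*(2)$, separate the linear-in-$\sigma$ part from the nonlinear part at every stage, and confirm that the leading constant is nonzero and bounded below by a positive multiple of $\beta(\xi)^2$ for the specific polarization vectors chosen — getting the powers of $\omega$, $|\zeta(j)|$ and $\beta(\xi)$ exactly right in all four terms is the delicate part, whereas the remainder estimates are essentially a repeat of the Schrödinger argument.
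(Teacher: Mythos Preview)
Your overall plan---substitute the CGO solutions into \eqref{id2}, extract the principal term from the $A(1)$--$A^*(2)$ pairing, and treat everything with a $\Psi$ as a remainder via the Fourier/H\"older argument of Lemma~\ref{lemma3.4}---matches the paper. But there is a genuine gap in your third step: you cannot treat $\Psi_2^*$ as a single black-box remainder.

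The bound $\|\Psi_2^*\|_{(2s)}\lesssim \omega/|\zeta|$ that you quote from \eqref{0809} does \emph{not} carry a factor of $m$. This is because $\Psi_2^*=P\Psi_{2*}+iP(\zeta(2))\Psi_{2*}-\overline{W_2}A_{2*}-\overline{W_2}\Psi_{2*}$, and the piece $\overline{W_2}A_{2*}$ has norm governed by $\|W_2\|_{(2s)}\sim\omega$ (from the $\kappa_2 I_8$ block), independent of $m$. So if you simply bound the $I_2$ term by $\|\Psi_2^*\|_{(2s)}$ times the $\mathcal G$-quantities, the $\mathcal G(\tilde\sigma)$ contribution you get after dividing by the principal coefficient is of order $C\omega/\beta(\xi)$ with \emph{no} $m$---strictly worse than the $C_\Omega\omega^{-1}\beta^{-1}+Cm(\omega^2+\omega\beta)\beta^{-2}$ claimed in \eqref{fourmax}. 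Downstream this produces a term $C(\omega/\beta)^2\|\tilde\sigma\|_{(-s)}^2$ on the right-hand side that cannot be absorbed, and the whole argument collapses.

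The paper resolves this by decomposing $\Psi_2^*$ into its four constituents and handling them separately. The three pieces $P\Psi_{2*}$, $iP(\zeta(2))\Psi_{2*}$, $\overline{W_2}\Psi_{2*}$ are controlled via the bounds \eqref{4:R1}, \eqref{4:R2} on $\Psi_{2*}$ (not $\Psi_2^*$), which \emph{do} carry $m$, yielding the $Cm$ terms. The remaining piece $\overline{W_2}A_{2*}$ is computed explicitly: one finds $W_2\overline{A_{2*}}=\omega\gamma_2^{1/2}|\zeta(2)|^{-1}(0,\overline{b(2)},0,0)^t$, pairs it against the explicit $(Q_1-Q_2)A(1)$, and uses the orthogonality $\overline{b(2)}\cdot b(1)=0$ (built into the choice of polarization vectors) to kill the worst $\omega^2$-type contributions. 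What survives is the $C_\Omega\mathcal Q_0(\xi)$ term, whose $\mathcal G(\tilde\sigma)$ coefficient has the extra $\omega^{-2}$ gain that makes \eqref{fourmax} correct. Your proposal misses this cancellation entirely; you need to isolate the $\overline{W_2}A_{2*}$ piece and compute it by hand.

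A secondary remark: your idea of linearizing $\alpha_j\approx i\sigma_j/\omega$ to separate $C_\Omega$ from $Cm$ contributions is not how the paper proceeds and is not needed. The key identity $\kappa_1^2-\kappa_2^2=\omega^2(\gamma_1-\gamma_2)=i\omega\tilde\sigma$ is exact, so the Fourier transform of $\tilde\sigma$ emerges from the principal term without any Taylor expansion; the $C_\Omega$/$Cm$ split in \eqref{fourmax} comes from the $\Psi_2^*$ decomposition just described, not from linearizing $\alpha$.
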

\begin{proof}
Using $Z_1=e^{i\zeta(1)\cdot x}(A(1)+\Psi_1)$, $Y_2=e^{i\zeta(2)\cdot x}(A^*(2)+\Psi^*_2)$
and the definition of $Q_1, Q_2$, we have
\begin{align}\label{four}
      & ((Q_1-Q_2)Z_1,Y_2)_\Omega= \int_\Omega e^{-i\xi\cdot x} (\overline{A^*(2)}+\overline{\Psi^*_2})^t
      (Q_1-Q_2)(A(1)+\Psi_1)dx\notag\\
&= \int_\Omega e^{-i\xi\cdot x}  \overline{A^*(2)} ^t (Q_1-Q_2) A(1) dx  +  \int_\Omega e^{-i\xi\cdot x}  \overline{\Psi^*_2} ^t (Q_1-Q_2) A(1) dx \notag\\
&\quad +  \int_\Omega e^{-i\xi\cdot x}  (\overline{A^*(2)}+\overline{\Psi^*_2})^t (Q_1-Q_2) \Psi_1 dx      \notag\\
&=:I_1+I_2+I_3.
\end{align}
We will evaluate each term in \eqref{four}. We begin with $I_1$. Since $A(1)=|\zeta(1)|^{-1} (0,\ \ \omega b(1),\ \ \zeta(1)\cdot b(1),\ \ 0)^t$ and $A^*(2)=|\zeta(2)|^{-1}(0,\ \ 0, \ \ \zeta(2)\cdot b(2),\ \ \zeta(2)\times b(2))^t$, we deduce that
$$
    I_1
    = \frac{(\zeta(1)\cdot b(1))(\overline{\zeta(2)} \cdot \overline{b (2)})}{|\zeta(1)||\zeta(2)|}
    \left(\int_\Omega  e^{-i\xi\cdot x} (\kappa_2^2-\kappa_1^2)dx\right)
+
$$
\begin{equation}
\label{4:I1}
 2\omega\frac{\overline{\zeta(2)} \cdot \overline{b(2)} }{|\zeta(1)||\zeta(2)|} \left(\int_\Omega  e^{-i\xi\cdot x} D(\kappa_2-\kappa_1)\cdot b(1) dx\right).
\end{equation}

To bound $I_2$, we recall that $\Psi^*_2=P\Psi_{2*}+iP(\zeta(2)) \Psi_{2*}-\overline{W_2}A_{2*}-\overline{W_2}\Psi_{2*}$, then
\begin{align*}
I_2&=\int_\Omega e^{-i\xi\cdot x}  (\overline{P\Psi_{2*}})^t (Q_1-Q_2) A(1) dx
+\int_\Omega e^{-i\xi\cdot x}  (\overline{iP(\zeta(2))\Psi_{2*}})^t (Q_1-Q_2) A(1) dx\notag\\
&\quad+\int_\Omega e^{-i\xi\cdot x}  (\overline{-\overline{W_2}A_{2*}})^t (Q_1-Q_2) A(1) dx
+\int_\Omega e^{-i\xi\cdot x}  (\overline{-\overline{W_2}\Psi_{2*}})^t (Q_1-Q_2) A(1) dx\notag\\
&=:J_1+J_2+J_3+J_4.
\end{align*}
We now bound $J_k\ ,k=1,2,3,4$. Applying (\ref{4:R1}), (\ref{4:R2}) and using that $m<1,  \omega\leq |\zeta(j)|$, we can deduce that
\begin{equation}
\label{4:PRR}
\|\Psi_{2*}\|_{(2s)}(\Omega)
\leq C\frac{m\omega}{|\zeta(2)|^2}\ \ ,\ \
\|\overline{P\Psi_{2*}}\|_{(2s)}(\Omega)
\leq C\frac{m\omega}{|\zeta(2)|}.
\end{equation}
By an argument similar to Lemma \ref{lemma3.4}, we have
\begin{align*}
|J_1|
&\leq   C \mathcal{Q}(\xi) \|\overline{P\Psi_{2*}}\|_{(2s)}(\Omega)\\
 &\leq C\frac{m\omega }{|\zeta(2)|} \mathcal{Q}(\xi) ,
\end{align*}
where 
$$
     \mathcal{Q}(\xi)=\left(\frac{1}{|\zeta(1)|}{\mathcal G}(\nabla^2\tilde\sigma)(\xi)+\frac{\omega+\zeta(1)\cdot b(1)}{|\zeta(1)|}{\mathcal G}(\nabla\tilde\sigma)(\xi) 
     +\frac{\omega^2+\omega(\zeta(1)\cdot b(1))}{|\zeta(1)|}{\mathcal G}(\tilde\sigma)(\xi) \right).
$$

Using the bound
$ \|\overline{P(\zeta(2))\Psi_{2*}}\|_{(2s)}(\Omega)
\leq C m\omega |\zeta(2)|^{-1}$ and  following a similar argument in Lemma \ref{lemma3.4}, we can derive
$$
|J_2|
\leq  C\frac{m\omega }{|\zeta(2)|} \mathcal{Q}(\xi).
$$
For $J_4$, it follows from the definition of $W_2$ that
$ \|W_2\|_{(2s)}(\Omega) \leq C \omega,$
then applying (\ref{4:PRR}) and the proof of Lemma \ref{lemma3.4}, we yield
$$
|J_4|
\leq  C\frac{m\omega }{|\zeta(2)|} \mathcal{Q}(\xi).
$$
Finally, we would like to estimate $J_3$. Note that
\begin{equation}\label{08120}
W_2\overline{A_{2*}}
=\frac{1}{|\zeta(2)|}\left(\kappa_2 I_8+\frac{1}{2}
\left(
\begin{tabular}{cc|cc}
  0 & 0 & 0 & $D\alpha_2\cdot$ \\
  0& 0&   $D\alpha_2 $& $D\times \alpha_2$\\
  \hline
  0 & 0 &0 &0 \\
    0 & 0 &0 &0 \\
\end{tabular}
       \right) \right)
\left(
  \begin{array}{c}
    0 \\
    \overline{b(2)} \\
    \hline
    0  \\
    0 \\
  \end{array}
\right)
=\frac{\omega \gamma_2^{1/2}}{|\zeta(2)|}
\left(
  \begin{array}{c}
    0 \\
    \overline{b(2)} \\
   \hline
    0  \\
    0 \\
  \end{array}
\right).
\end{equation}
In view of the definition of $Q_j$, direct calculations show that
\begin{align}\label{08121}
&(Q_1-Q_2)A(1)\notag\\
&=\frac{\omega}{2|\zeta(1)|}
\left(
  \begin{array}{c}
    0 \\
   2(\nabla^2(\alpha_1-\alpha_2))b(1)-
   (\Delta(\alpha_1-\alpha_2))b(1)\\
   \hline
    0  \\
    0 \\
  \end{array}
\right)
-\frac{i\omega(\sigma_1-\sigma_2)}{|\zeta(1)|}
\left(
  \begin{array}{c}
    0 \\
   \omega b(1) \\
   \hline
    \zeta(1)\cdot b(1)  \\
    0 \\
  \end{array}
\right)\notag\\
&\quad-
\frac{1}{|\zeta(1)|}
\left(
  \begin{array}{c}
    0  \\
    \frac{1}{4}\omega(D\alpha_1\cdot D\alpha_1- D\alpha_2\cdot D\alpha_2) b(1)+2(\zeta(1)\cdot b(1))D(\kappa_1-\kappa_2) \\
    \hline
    2\omega b(1) \cdot D(\kappa_1-\kappa_2) \\
    0 \\
  \end{array}
\right).
\end{align}
Now putting \eqref{08120}, \eqref{08121} together and using the fact that $\overline{b(2)}\cdot b(1)=0,$ we have
\begin{align*}
 &(W_2\overline{A_{2*}})^t(Q_1-Q_2)A(1)\notag\\
 &=\frac{\omega^2\gamma_2^{1/2}\overline{b(2)}^t
 (\nabla^2(\alpha_1-\alpha_2))b(1)  -
   2 \omega \gamma_2^{1/2}( \zeta(1)\cdot b(1) )\overline{b(2)}^t D(\kappa_1-\kappa_2) }{|\zeta(1)|
   |\zeta(2)|},
\end{align*}
which implies, combined with $\omega\geq 1$, $m<1$, $|\zeta(j)|\geq \omega$, that
$$
|J_3|\leq C_\Omega\mathcal{Q}_0(\xi) 
$$
where
the constant $C_\Omega$ only depends on $\Omega$ and
$$
    \mathcal{Q}_0(\xi)= \frac{1}{|\zeta(1)||\zeta(2)|} (\omega{\mathcal G}(\nabla^2\tilde\sigma)(\xi)+\omega(\zeta(1)\cdot b(1)){\mathcal G}(\nabla\tilde\sigma)(\xi)+(\zeta(1)\cdot b(1) ){\mathcal G}(\tilde\sigma)(\xi)).
$$
Combining estimates for $J_1$ to $J_4$ yields
\begin{align}
\label{5:w2}
     |I_2|&\leq C\frac{m\omega }{|\zeta(2)|} \mathcal{Q}(\xi)   +   C_\Omega\mathcal{Q}_0(\xi).
\end{align}

To evaluate $I_3$, from (\ref{S2}), 
$$
\|\Psi_1\|_{(2s)}(\Omega) \leq  \frac{C}{|\zeta(1)|} |A(1)| \|\omega^2 I_8+Q_1\|_{(2s)}(B) \leq
C\frac{\omega b(1)+\zeta(1)\cdot b(1)}{|\zeta(1)|}\frac{ m \omega }{|\zeta(1)|} ,
$$
it follows that
\begin{equation}
\label{5:w3}
     |I_3|\leq C\frac{m\omega }{|\zeta(2)|} \mathcal{Q}(\xi).
\end{equation}
Note that $\zeta(j)\cdot b(j)>0$ for $j=1,2$. Combining (\ref{id2}), (\ref{four}), (\ref{4:I1}), (\ref{5:w2}) and (\ref{5:w3}), we get
\begin{align}\label{5:10}
     &\frac{(\zeta(1)\cdot b(1))(\zeta(2)\cdot b(2))}
     {|\zeta(1)||\zeta(2)|}\left|\left(\int_\Omega
     e^{-i\xi\cdot x} (\kappa_2^2-\kappa_1^2)dx\right)\right|\notag\\
&\leq  2\omega\frac{\zeta(2)\cdot b(2)}{|\zeta(1)||\zeta(2)|}\left|\int_\Omega  e^{-i\xi\cdot x} D(\kappa_2-\kappa_1)\cdot b(1) dx\right| 
  + C\frac{m\omega }{|\zeta(2)|} \mathcal{Q}(\xi)+C_\Omega \mathcal{Q}_0(\xi)  +F,
\end{align}
where
$
F=C (1+|\zeta(2)|)^3 e^{ 2^{-1/2}R}
\dist({\mathcal C}_{1},{\mathcal C}_{2}).
$
Substituting $\kappa_j=\omega\gamma_j^{1/2}$ into the estimate (\ref{5:10}), we obtain that
\begin{align*}
&\omega\frac{(\zeta(1)\cdot b(1))(\zeta(2)\cdot b(2))}{|\zeta(1)||\zeta(2)|}\left|\int_\Omega  e^{-i\xi\cdot x} (\sigma_2-\sigma_1)dx\right|\\
&\leq 2\omega^2 \left|\int_\Omega e^{-i\xi\cdot x} D(\gamma_2^{1/2}-\gamma_1^{1/2})\cdot b(1)dx\right|
\frac{\zeta(2)\cdot b(2)}{|\zeta(1)||\zeta(2)|} + C\frac{m\omega }{|\zeta(2)|} \mathcal{Q}(\xi)+C_\Omega\mathcal{Q}_0(\xi)  +F,
\end{align*}
which leads to
\begin{align}\label{ss1}
| (\hat{\sigma}_1-\hat{\sigma}_2)(\xi)|
&\leq  C_\Omega \frac{1}{(\zeta(1)\cdot b(1)) 	}({\mathcal G}(\nabla\tilde\sigma)(\xi) +\omega^{-1}{\mathcal G}(\tilde\sigma)(\xi))  +    C\frac{m|\zeta(1)| }{(\zeta(1)\cdot b(1))(\zeta(2)\cdot b(2))} \mathcal{Q}(\xi)\notag\\
&\quad+ C_\Omega \frac{ |\zeta(1)||\zeta(2)|}{\omega(\zeta(1)\cdot b(1))(\zeta(2)\cdot b(2))} \mathcal{Q}_0(\xi)
  + \omega^{-1}\frac{|\zeta(1)||\zeta(2)|}{(\zeta(1)\cdot b(1))(\zeta(2)\cdot b(2))}F\notag\\
&\leq g_1(\xi)
 + g_2(\xi)  + \omega^{-1}\frac{|\zeta(1)||\zeta(2)|}{(\zeta(1)\cdot b(1))(\zeta(2)\cdot b(2))}F\notag\\
\end{align}
with
  \begin{align*}
      g_1(\xi)&=C_\Omega  \frac{1}{ (\zeta(1)\cdot b(1))(\zeta(2)\cdot b(2)) }{\mathcal G}(\nabla^2\tilde\sigma)(\xi)
     +  C_\Omega\frac{1}{ (\zeta(1)\cdot b(1))  }{\mathcal G}(\nabla\tilde\sigma)(\xi)+ C_\Omega\frac{1}{ \omega(\zeta(1)\cdot b(1))  }{\mathcal G}(\tilde\sigma)(\xi);\\
     g_2(\xi)&=
     C \frac{m}{ (\zeta(1)\cdot b(1))(\zeta(2)\cdot b(2)) } {\mathcal G}(\nabla^2\tilde\sigma)(\xi) +
      C  \frac{m(\omega + (\zeta(1)\cdot b(1)) )  }{ (\zeta(1)\cdot b(1))(\zeta(2)\cdot b(2))  }{\mathcal G}(\nabla\tilde\sigma)(\xi)\\
      &\quad+  C  \frac{m(\omega^2+\omega(\zeta(1)\cdot b(1)) )  }{ (\zeta(1)\cdot b(1))(\zeta(2)\cdot b(2))  }{\mathcal G}(\tilde\sigma)(\xi).
  \end{align*}
 
Since 
$$
2^{1/2}( \zeta(1)\cdot b(1))=2^{-1/2}R+\left(\omega^2+\frac{R^2}{2}-\frac{|\xi|^2}{4} \right)^{1/2},\ \ \ (1+|\zeta(2)|) \leq 2(\omega^2+R^2)^{1/2},
$$
we have  
\begin{align}\label{g12}
      g_1(\xi)&\leq  C_\Omega  \frac{1}{\beta(\xi)^2 } {\mathcal G}(\nabla^2\tilde\sigma)(\xi)
      +  C_\Omega\frac{1}{\beta(\xi)  }{\mathcal G}(\nabla\tilde\sigma)(\xi) +  C_\Omega\frac{1}{ \omega\beta(\xi)  }{\mathcal G}(\tilde\sigma)(\xi);\\
     g_2(\xi)&\leq 
    C   \frac{m}{\beta(\xi)^2 } {\mathcal G}(\nabla^2\tilde\sigma)(\xi) +
      C \frac{m(\omega+\beta(\xi))}{\beta(\xi)^2   }{\mathcal G}(\nabla\tilde\sigma)(\xi) +  C \frac{ m(\omega^2+\omega \beta(\xi)) }{\beta(\xi)^2 }{\mathcal G}(\tilde\sigma)(\xi)
  \end{align}
  and 
  \begin{equation}\label{ff12}
     F\leq C ( \omega^2+R^2)^{3/2} e^{  2^{1/2}R}  \dist({\mathcal C}_{1},{\mathcal C}_{2}),
  \end{equation}
where
  $$
  \beta(\xi)=R+\left(2\omega^2+ R^2 -\frac{|\xi|^2}{2} \right)^{1/2} .
  $$
The estimates (\ref{ss1})- (\ref{ff12}) complete the proof.
\end{proof}
 
Recall that $\epsilon= \dist({\mathcal C}_{1}, {\mathcal C}_{2}),\ \mathcal{E}=-\log\epsilon$. Before proving our main result, we need the following lemma.
\begin{lemma}
Let $R^*>2^{1/2}C_0C_3$ with $C_0$ and $C_3$ defined in Proposition~\ref{cgo:res} and \eqref{0813}, respectively. There exist a constant $C_\Omega$ depending only on $\Omega$ and a constant $C$ depending on $s, \Omega$ such that the following estimates hold: if $0\leq r\leq \omega+R^*$, then
\begin{align}\label{mm1}
| (\hat{\sigma}_1-\hat{\sigma}_2)(r e)|
   &\leq   \frac{ C_\Omega+Cm }{ (R^*+\omega)^2} {\mathcal G}(\nabla^2\tilde\sigma)(\xi)+   \frac{ C_\Omega+Cm  }{  R^*+\omega }{\mathcal G}(\nabla\tilde\sigma)(\xi)\notag\\
   &\quad +\left(   \frac{ C_\Omega  }{\omega(R^*+\omega)}  +Cm\right){\mathcal G}(\tilde\sigma)(\xi)+
   C\omega^{-1}( \omega^2+R^{2*})^{3/2}  e^{ 2^{ 1/2} R^* }   \epsilon ;
\end{align}
if $r\geq \omega+R^*$, then
\begin{align}\label{mm2}
| (\hat{\sigma}_1-\hat{\sigma}_2)(r e)|
   &\leq     \frac{  C_\Omega+Cm  }{ (r+\omega)^2} {\mathcal G}(\nabla^2\tilde\sigma)(\xi)+   \frac{ C_\Omega+Cm  }{ r+\omega } {\mathcal G}(\nabla\tilde\sigma)(\xi)\notag\\
      &\quad +\left(   \frac{ C_\Omega  }{\omega(r+\omega)}  +Cm\right){\mathcal G}(\tilde\sigma)(\xi)+
   C\omega^{-1}(\omega^2+r^2  )^{3/2}   e^{ 2^{ 1/2}r}   \epsilon.
\end{align}
\end{lemma}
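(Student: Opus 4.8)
\emph{Proof proposal.} Both inequalities should come out of Lemma~\ref{5:gamma} evaluated at the two natural choices of the free parameter $R$, in complete analogy with the way Lemma~\ref{lemma3.5} was deduced from Lemma~\ref{lemma3.4} in the scalar case. I would set $R=R^*$ when $0\le r\le\omega+R^*$ and $R=r$ when $r\ge\omega+R^*$, and then replace $\beta(\xi)$ on the right-hand side of \eqref{fourmax} by a convenient lower bound.

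First I would check that the hypotheses behind Lemma~\ref{5:gamma} are met for either choice, namely the admissibility relation $\omega^2+R^2/2\ge|\xi|^2/4$ and the size condition from \eqref{a1}--\eqref{a2} under which the CGO solutions of Propositions~\ref{cgo:max} and \ref{cgo:res} exist. The size condition holds because $R\ge R^*>2^{1/2}C_0C_3$ and $m$ has already been fixed small enough before \eqref{id2} (so that $2^{1/2}C_0C_2m<1$); the admissibility relation is trivial when $R=r$ (it reads $\omega^2+r^2/4\ge0$), and when $R=R^*$ with $r\le\omega+R^*$ it follows from $r^2/4\le(\omega+R^*)^2/4\le(\omega^2+(R^*)^2)/2\le\omega^2+(R^*)^2/2$.

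The one point that requires a computation is the lower bound $\beta(\xi)\ge R+\omega$. When $R=r$ it is immediate, since $\beta(\xi)=r+(2\omega^2+r^2/2)^{1/2}\ge r+\sqrt2\,\omega\ge r+\omega$. When $R=R^*$ and $0\le r\le\omega+R^*$, I would complete the square: from $r^2\le(\omega+R^*)^2$,
\[
2\omega^2+(R^*)^2-\tfrac{r^2}{2}\ \ge\ \tfrac32\omega^2-\omega R^*+\tfrac12(R^*)^2\ =\ \omega^2+\tfrac12(\omega-R^*)^2\ \ge\ \omega^2,
\]
whence $\beta(\xi)=R^*+(2\omega^2+(R^*)^2-r^2/2)^{1/2}\ge R^*+\omega$; in either case one then also has $\beta(\xi)^2\ge(R+\omega)^2\ge\omega^2+R^2$. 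Feeding $\beta(\xi)^{-1}\le(R+\omega)^{-1}$, $\beta(\xi)^{-2}\le(R+\omega)^{-2}$ and $\beta(\xi)^{2}\ge\omega^2+R^2$ into the four terms on the right of \eqref{fourmax} turns the first three into the first three terms of \eqref{mm1} (resp.\ \eqref{mm2}), and, after cancelling the surplus powers of $(\omega^2+R^2)$ against $\beta(\xi)^2$, turns the last one into $C\omega^{-1}(\omega^2+R^2)^{3/2}e^{2^{1/2}R}\epsilon$. Specializing $R=R^*$ gives \eqref{mm1} and $R=r$ gives \eqref{mm2}. I do not expect any genuine obstacle here; the sole step that is not a pure substitution is the completion-of-the-square estimate for $\beta(\xi)$ in the regime $r\le\omega+R^*$, and even that is elementary.
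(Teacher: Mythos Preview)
Your proposal is correct and follows exactly the paper's approach: the paper's proof consists of the single sentence ``choose $R=R^*$ when $0\le r\le\omega+R^*$ and $R=r$ when $r\ge\omega+R^*$ in the estimate \eqref{fourmax},'' and you carry out precisely this substitution. Your completion-of-the-square verification that $\beta(\xi)\ge R+\omega$ in both regimes is a welcome detail the paper leaves implicit; note also that in the last term of \eqref{fourmax} the exponent should read $5/2$ rather than $7/2$ (trace back through \eqref{ss1}--\eqref{ff12}), after which your bound $\beta(\xi)^2\ge\omega^2+R^2$ indeed yields the exponent $3/2$ appearing in \eqref{mm1}--\eqref{mm2}.
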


\begin{proof}
We choose $R=R^*$ when $0\leq r\leq \omega+R^*$ and  $R=r$ when $r\geq \omega+R^*$ in the estimate (\ref{fourmax}).
\end{proof}

Now we are ready to show Theorem~\ref{main}. \\
\medskip\noindent
\emph{Proof of Theorem \ref{main}.} 
Let $\tilde \sigma=\sigma_1-\sigma_2$. Using polar coordinates, we obtain
\begin{align}\label{F0b}
    \|\tilde \sigma\|^2_{(-s )}(\Omega) 
     &\leq \int_{|\xi|\leq \omega+R^*}   |\hat{\tilde \sigma}(\xi)|^2(1+|\xi|^2)^{-s }   d \xi \notag\\
     &\quad+\int_{\omega+R^*\leq|\xi|\leq T}  |\hat{\tilde \sigma}(\xi)|^2(1+|\xi|^2)^{-s }   d\xi\notag\\
     &\quad+\int_T^\infty\int_{|e|=1}  |\hat{\tilde \sigma}(r e)|^2(1+r^2)^{-s } r^2 d e dr\notag\\
     &=:I_1+I_2+I_3.
\end{align}

We begin with bounding $I_3$. Since $\supp \tilde \sigma \subset \Omega$ by the H\"older's inequality
$|\hat{\tilde \sigma}(\xi)|\leq C\|\tilde \sigma\|_{(0)}(\Omega)\leq m\leq 1$ and so
\begin{align}\label{F1b}
I_3& \leq
C \|\tilde \sigma\|^2_{(0)}(\Omega)\int_T^\infty\int_{|e|=1}   (1+r^2)^{-s }  r^2 de dr\notag\\
     &\leq C \|\tilde \sigma\|^2_{(0)}(\Omega)T^{-(2s-3)}
     \leq C  T^{-(2s-3)}.
\end{align}

Before evaluating $I_1$ and $I_2$ terms, we need the following estimate.
Let $A=\{\eta:\ |-\eta+\xi-x|\leq |\eta|/2\}.$ By direct computation, we have, for $0\le\alpha\leq 2$,
\begin{align}\label{k1}
&  \int \langle \xi\rangle^{ -2s }   \int \langle x\rangle^{-4s}  \left(\int_{A^c} \langle -\eta+\xi-x\rangle ^{-2 s} |\widehat{  \nabla^\alpha \tilde{\sigma}}(\eta)| ^2 d\eta \right)        dx  d\xi \notag\\
& \leq  C \int \langle \xi\rangle^{ -2s }   \int \langle x\rangle^{-4s}  \left(\int_{A^c} \langle \eta \rangle ^{-2 s} |\widehat{\nabla^\alpha \tilde{\sigma}}(\eta)|^2 d\eta \right)        dx  d\xi\notag\\
&\leq C\| \nabla^\alpha\tilde{\sigma}\|^2_{(-s)} .
\end{align}

On the other hand, by using the fact that $A\subset \{\eta;\  \frac{2}{3}|\xi-x|\leq |\eta|\leq 2|\xi-x|\}$ and $\langle x\rangle^{-2s}\langle \xi\rangle^{-2s} \leq 2^s \langle x-\xi\rangle^{-2s}  $, one has
\begin{align}\label{k2}
&  \int \langle \xi\rangle^{ -2s}   \int \langle x\rangle^{-4s}  \left(\int_{A } \langle -\eta+\xi-x\rangle ^{-2 s} |\widehat{  \nabla^\alpha \tilde{\sigma}}(\eta)| ^2 d\eta \right)        dx  d\xi  \notag\\
&\leq   \int \langle \xi\rangle^{ -2s}   \int \langle x\rangle^{-4s}   \int_{\frac{2}{3}|\xi-x|\leq |\eta|\leq 2|\xi-x|} \langle -\eta+\xi-x\rangle ^{-2 s} |\widehat{\nabla^\alpha \tilde{\sigma}}(\eta)| ^2 d\eta        dx  d\xi \notag \\
&\leq C \int   \langle x\rangle^{- 2s}   \int \int_{\frac{2}{3}|\xi-x|\leq |\eta|\leq 2|\xi-x|} \langle x-\xi\rangle^{- 2s} \langle -\eta+\xi-x\rangle ^{-2 s} |\widehat{  \nabla^\alpha \tilde{\sigma}}(\eta)| ^2 d\eta        d\xi dx    \notag \\
& \leq C  \int \langle x\rangle^{- 2s}   \int \int_{\frac{2}{3}|\xi-x|\leq |\eta|\leq 2|\xi-x|} \langle \eta \rangle^{- 2s} \langle -\eta+\xi-x\rangle ^{-2 s} |\widehat{  \nabla^\alpha \tilde{\sigma}}(\eta)| ^2 d\eta        d\xi dx  \notag  \\
& \leq C  \int \left(   \int \int  \langle x\rangle^{- 2s}  \langle -\eta+\xi-x\rangle ^{-2 s} d\xi dx \right) |\widehat{  \nabla^\alpha \tilde{\sigma}}(\eta)| ^2 \langle \eta \rangle^{- 2s} d\eta \notag\\
 &\leq C\| \nabla^\alpha \tilde{\sigma}\|^2_{(-s)}.
\end{align}
  We obtain from (\ref{k1}) and (\ref{k2}) that 
\begin{align}\label{newQb}
      \int  \langle \xi\rangle^{ -2s}|{\mathcal G}(\nabla^{\alpha}\tilde\sigma)(\xi)|^2 d\xi \leq C\| \nabla^\alpha \tilde{\sigma}\|^2_{(-s)}.\end{align}

Next, by using (\ref{mm1}), (\ref{k1}), (\ref{k2}) and (\ref{newQb}), we yield
\begin{align}\label{I1}
I_1&\leq \int_{|\xi|\leq \omega+R^*}  \Big( \frac{(C_\Omega+Cm)^2}{ (R^*+\omega)^4}|{\mathcal G}(\nabla^2\tilde\sigma)(\xi)|^2+   \frac{(C_\Omega+Cm)^2 }{  (R^*+\omega)^2 }|{\mathcal G}(\nabla\tilde\sigma)(\xi)|^2\notag\\
&\quad + \left(   \frac{ C_\Omega  }{\omega(R^*+\omega)}  +Cm\right)^2 |{\mathcal G}(\tilde\sigma)(\xi)|^2+C\omega^{-2}(\omega^2+R^{*2})^3 e^{2\sqrt{2}R^*}\epsilon ^2\Big) \langle \xi\rangle^{ -2s}    d\xi\notag\\
&\leq  Cm^2  \|\tilde \sigma\|^2_{(-s )} +  \frac{C}{(R^*+\omega)^2}  \|\tilde \sigma\|^2_{(-s+2)}  
 +C\omega^{-2}(\omega^2+R^{*2})^3 e^{2\sqrt{2}R^*}\epsilon ^2 \int_{|\xi|\leq \omega+R^*}\langle \xi\rangle^{ -2s}    d\xi \notag\\
&\leq   Cm^2  \|\tilde \sigma\|^2_{(-s )} +  \frac{Cm^2}{(R^*+\omega)^2}  
+C\omega^{-2}(\omega^2+R^{*2})^3 e^{2\sqrt{2}R^*} \epsilon^2.
\end{align}
Similarly, we deduce that 
\begin{align}\label{I2}
I_2&\leq \int_{\omega+R^*\leq |\xi|\leq T} \Big( \frac{(C_\Omega+Cm)^2}{ (r+\omega)^4}|{\mathcal G}(\nabla^2\tilde\sigma)(\xi)|^2+   \frac{ {(C_\Omega+Cm)^2}}{  (r+\omega)^2 }|{\mathcal G}(\nabla\tilde\sigma)(\xi)|^2 \notag\\
&\quad + \left(   \frac{ C_\Omega  }{\omega(r+\omega)}  +Cm\right)^2 |{\mathcal G}(\tilde\sigma)(\xi)|^2+\omega^{-2}(\omega^2+r^2)^3 e^{2\sqrt{2}r}\epsilon ^2\Big) \langle \xi\rangle^{ -2s}     d\xi\notag\\
&\leq  Cm^2  \|\tilde \sigma\|^2_{(-s )} +  \frac{C}{(R^*+\omega)^2}  \|\tilde \sigma\|^2_{(-s+2)}  +\omega^{-2}(\omega^2+T^2)^3 e^{2\sqrt{2}T} \epsilon^2  \int_{\omega+R^*\leq |\xi|\leq T}\langle \xi\rangle^{ -2s}   d\xi \notag\\
&\leq   Cm^2  \|\tilde \sigma\|^2_{(-s )} +  \frac{Cm^2}{(R^*+\omega)^2}    +\omega^{-2}(\omega^2+T^2)^3 e^{2\sqrt{2}T} \epsilon^2 .
\end{align}
Therefore, we conclude from (\ref{F0b}), (\ref{F1b}), (\ref{I1}) and (\ref{I2}) that 
\begin{lemma}
For any $T\ge\omega+R^*$, we have
 $$
       \|\tilde \sigma\|^2_{(-s )}(\Omega)
   \leq  Cm^2  \|\tilde \sigma\|^2_{(-s )} +  \frac{Cm^2}{(R^*+\omega)^2}  
   $$
        \begin{equation}\label{pre:qb}
   + C\omega^{-2}(\omega^2+T^2)^3 ( e^{2\sqrt{2}R^*}+ \chi(T) e^{2\sqrt{2}T} )\epsilon^2
       +C  T^{-(2s-3)},
 \end{equation}
where $\chi(T)\leq 1$ and $\chi(T)=0$ if $T=\omega+R^*$.
\end{lemma}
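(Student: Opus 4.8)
\emph{Proof proposal.} The statement is an assembly of the three pieces of the frequency decomposition \eqref{F0b} together with the pointwise Fourier bounds \eqref{mm1}, \eqref{mm2} and the Schur-type estimate \eqref{newQb}; it is proved by combining \eqref{F0b}, \eqref{F1b}, \eqref{I1} and \eqref{I2}. Concretely, with $\tilde\sigma=\sigma_1-\sigma_2$ I would start from the polar-coordinate splitting $\|\tilde\sigma\|^2_{(-s)}(\Omega)=I_1+I_2+I_3$ of \eqref{F0b}, where $I_1$ runs over the ball $|\xi|\le\omega+R^*$, $I_2$ over the annulus $\omega+R^*\le|\xi|\le T$, and $I_3$ over the tail $|\xi|\ge T$.

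For the tail $I_3$, since $\supp\tilde\sigma\subset\Omega$ lies in the unit ball the Paley--Wiener/H\"older bound gives $|\hat{\tilde\sigma}(\xi)|\le C\|\tilde\sigma\|_{(0)}(\Omega)$, which is $\le C$ by the smallness hypothesis \eqref{cond2} (here $m<1$); integrating $(1+r^2)^{-s}r^2$ over $r\ge T$, a convergent integral because $2s-3>0$, yields $I_3\le C\,T^{-(2s-3)}$ as in \eqref{F1b}. For $I_1$ I would insert \eqref{mm1} with $R=R^*$, square, and integrate against $\langle\xi\rangle^{-2s}$; the terms involving ${\mathcal G}(\nabla^\alpha\tilde\sigma)$ with $\alpha=0,1,2$ are converted by \eqref{newQb} into $C\|\nabla^\alpha\tilde\sigma\|^2_{(-s)}\le C\|\tilde\sigma\|^2_{(-s+\alpha)}$, the $\alpha=2$ and $\alpha=1$ contributions carrying the gains $(R^*+\omega)^{-4}$, $(R^*+\omega)^{-2}$ and hence bounded by $C(R^*+\omega)^{-2}\|\tilde\sigma\|^2_{(-s+2)}$, while the $\alpha=0$ term produces $Cm^2\|\tilde\sigma\|^2_{(-s)}$ (plus a lower-order $(\omega(R^*+\omega))^{-2}$ piece). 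Using that $s\ge2$ (an integer $>3/2$), so $-s+2\le0$, together with \eqref{cond2} we have $\|\tilde\sigma\|_{(-s+2)}(\Omega)\le\|\tilde\sigma\|_{(0)}(\Omega)\le2m$, turning the intermediate-norm term into $Cm^2(R^*+\omega)^{-2}$; the measurement term integrates to $C\omega^{-2}(\omega^2+R^{*2})^3e^{2\sqrt2R^*}\epsilon^2$ since $\int_{|\xi|\le\omega+R^*}\langle\xi\rangle^{-2s}\,d\xi$ is bounded. This is exactly \eqref{I1}. The bound \eqref{I2} for $I_2$ follows verbatim the same way, now using \eqref{mm2} with $R=r$ on the annulus, which gives the $(\omega^2+T^2)^3e^{2\sqrt2T}$ measurement factor.

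Adding $I_1+I_2+I_3$ gives \eqref{pre:qb}; the cut-off $\chi(T)$, with $\chi\le1$ and $\chi(\omega+R^*)=0$, simply records that the annulus, hence $I_2$, is empty when $T=\omega+R^*$. There is no essential difficulty here beyond careful bookkeeping: the only point that must be gotten right is the reduction of the ${\mathcal G}(\nabla^\alpha\tilde\sigma)$-terms via \eqref{newQb}, so that the $Cm^2\|\tilde\sigma\|^2_{(-s)}$ term on the right-hand side has exactly the form that will later be absorbed into the left-hand side once $m$ is chosen small; everything else reduces to the convergence of the elementary radial integrals.
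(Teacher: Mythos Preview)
Your proposal is correct and follows essentially the same approach as the paper: the lemma is indeed obtained by combining the frequency decomposition \eqref{F0b} with the tail bound \eqref{F1b} and the two annular estimates \eqref{I1}, \eqref{I2}, which in turn rest on \eqref{mm1}, \eqref{mm2} and the Schur-type bound \eqref{newQb}. Your bookkeeping --- in particular the observation that $s\ge 2$ (being an integer $>3/2$) so that $\|\tilde\sigma\|_{(-s+2)}\le\|\tilde\sigma\|_{(0)}\le 2m$, and the role of $\chi(T)$ when the annulus is empty --- matches the paper exactly.
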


We then choose $m<1$ satisfying $Cm^2<1/2$. Thus, the first term on the right hand side of (\ref{pre:qb})
can be absorbed by the left hand side. We obtain
 \begin{align*} 
   \|\tilde \sigma\|^2_{(-s)}(\Omega)  \leq   \frac{C}{(R^*+\omega)^2}  
      + C\omega^{-2}(\omega^2+T^2)^3 ( e^{2\sqrt{2}R^*}+ \chi(T) e^{2\sqrt{2}T} )\epsilon^2
  +C  T^{-(2s-3)},
\end{align*}
Let $\mathcal{E}>4C_0C_3$, we take $ 2\sqrt{2}R^* =\mathcal{E}$ 
so that $R^*$ still satisfies $R^*> 2^{1/2}C_0C_3$ and we have
 \begin{align} \label{pre:q1}
   \|\tilde \sigma\|^2_{(-s)}(\Omega)  \leq   \frac{C}{(\mathcal{E}+\omega)^2}  
      + C\omega^{-2}(\omega^2+T^2)^3 ( \epsilon^{-1}+ \chi(T) e^{2\sqrt{2}T} )\epsilon^2
  +C  T^{-(2s-3)}.
\end{align}

We consider the following two cases:
\begin{align*}
    \text{(i)}\ \omega+R^*\leq \frac{1}{2}\mathcal{E}\ \ \hbox{and}\ \ \text{(ii)}\ \omega+\mathcal{R}^*\geq \frac{1}{2}\mathcal{E}.
\end{align*}
In the  case (i) we choose
$T=\frac{\mathcal{E}}{2}.$
By the same method as in the proof of Theorem~\ref{mainatt},
there exists $C_4$ such that
\begin{equation}\label{mres1}
\omega^{-2}(\omega^2+T^2)^3 e^{2\sqrt{2}T} \epsilon^2\leq C_4 (\omega+\mathcal{E})^{-(2s-3)}
\end{equation}
and
\begin{equation}\label{mres2}
 T^{-(2s-3)}\leq C_4 (\omega+\mathcal{E})^{-(2s-3)}.
\end{equation}
Therefore, from (\ref{pre:q1})-(\ref{mres2}), one has
\begin{align}\label{maxwe}
      \|\tilde \sigma\|^2_{(-s)}(\Omega)  \leq   \frac{C}{(\mathcal{E}+\omega)^2} +    C\omega^{-2}(\omega^2+\mathcal{E}^2)^3\epsilon
      +C(\omega+\mathcal{E})^{-(2s-3)} .
\end{align}

Next we consider case (ii). We choose $T=\omega+R^*$ so that $\chi(T)=0$.
Then  we have
\begin{equation*}
            \|\tilde \sigma\|^2_{(-s)}(\Omega)  \leq   \frac{C}{(\mathcal{E}+\omega)^2} +    C\omega^{-2}(\omega^2+(\omega+R^*)^2)^3\epsilon
                +C(\omega+R^*)^{-(2s-3)} .
\end{equation*}
Since $R^*=2^{-3/2}\mathcal{E}$, we have 
 the same estimate \eqref{maxwe}.  
We complete the proof of Theorem \ref{main}.

\section{ Conclusion}

The next analytic task is to get rid of smallness assumptions of Theorems 2.1, 2.2 on attenuation and conductivity. It possibly will need a more careful reduction to a vectorial Schr\"odinger equation using some results in \cite{OS}. It looks realistic to extend our increasing stability results for the conductivity coefficient onto the case of partial data
\cite{Ca0}. To do so one can try to adjust the methods (in particular a new construction of CGO solutions) of the recent paper \cite{L}. 

It is very important to verify if our theoretical prediction will result in better numerical reconstruction of the important conductivity coefficient for higher frequencies. After successful
numerical testing one can expect a substantial improvements in the electrical impedance tomography, which so far was cheap, non destructive imaging technique with unfortunately pretty low resolution.

\end{document}